\theoremstyle{plain}
\newtheorem{theorem}[equation]{Theorem}
\newtheorem{proposition}[equation]{Proposition}
\newtheorem{lemma}[equation]{Lemma}
\newtheorem{corollary}[equation]{Corollary}
\theoremstyle{definition}
\newtheorem{definition}[equation]{Definition}
\theoremstyle{remark}
\newtheorem{example}[equation]{Example}
\newtheorem{remark}[equation]{Remark}
\newtheorem{notation}[equation]{Notation}
\renewcommand{\subsection}{\@startsection{subsection}{2}{0pt}{-3ex
plus -1ex minus -0.2ex}{-2mm plus -0pt minus
-2pt}{\normalfont\bfseries}} \makeatother
\numberwithin{equation}{subsection}
\newcommand{\Lmod}[1]{#1\text{-}{\mathsf{mod}}}
\newcommand{\idot}{{\:\raisebox{1pt}{\text{\circle*{1.5}}}}}
\DeclareMathOperator{\Quot}{{\mathrm{rep}}}
\DeclareMathOperator{\Ext}{\mathrm{Ext}}
\DeclareMathOperator{\Hom}{\mathrm{Hom}}
\DeclareMathOperator{\End}{\mathrm{End}}
\DeclareMathOperator{\Der}{\mathrm{Der}}
\DeclareMathOperator{\im}{\mathrm{Im}}
\DeclareMathOperator{\Ker}{\mathrm{Ker}}
\DeclareMathOperator{\rep}{{\mathrm{rep}}}
\newcommand{\rrep}{{\rep}^n_C }
\DeclareMathOperator{\gr}{\mathtt{gr}}
\DeclareMathOperator{\Sym}{\mathrm{Sym}}
\DeclareMathOperator{\Supp}{\mathtt{Supp}}
\newcommand{\supp}{\varpi}
\DeclareMathOperator{\Ad}{\mathrm{Ad}}
\DeclareMathOperator{\Lie}{\mathrm{Lie}}
\DeclareMathOperator{\ad}{\mathrm{ad}}
\newcommand{\dis}{\displaystyle}
\newcommand{\erem}{\hfill$\lozenge$\end{remark}}
\newcommand{\beq}{\begin{equation}\label}
\newcommand{\eeq}{\end{equation}}
\newcommand{\en}{\enspace}
\renewcommand{\o}{\otimes}
\newcommand{\wh}{\widehat}
\newcommand{\wt}{\widetilde}
\renewcommand{\det}{\oper{\mathtt{det}}}
\newcommand{\f}[1]{\mathfrak{#1}}
\newcommand{\scr}[1]{\mathscr{#1}}
\newcommand{\oper}{\operatorname}
\newcommand{\De}{\Delta }
\newcommand{\Om}{\Omega }
\newcommand{\act}{{\mathtt{act}}}
\DeclareMathOperator{\Spec}{\mathrm{Spec}}
\newcommand{\iso}{{\,\stackrel{_\sim}{\rightarrow}\,}}
\def\ccirc{{{}_{^{\,^\circ}}}}
\newcommand{\sminus}{\smallsetminus}
\renewcommand{\mid}{\enspace\big|\enspace}
\newcommand{\too}{\,\,\longrightarrow\,\,}
\newcommand{\mto}{\mapsto}
\newcommand{\map}{\longrightarrow}
\newcommand{\onto}{\twoheadrightarrow}
\newcommand{\cd}{\!\cdot\!}
\newcommand{\pb}{\noindent$\bullet\quad$\parbox[t]{140mm}}
\def\hp{\hphantom{x}}
\newcommand{\hhh}{ Harish-Chandra }
\newcommand{\bdel}{{\boldsymbol{\delta}}}
\newcommand{\ssl}{{\scr L}}
\newcommand{\vi}{${\sf {(i)}}\;$}
\newcommand{\vii}{${\sf {(ii)}}\;$}
\newcommand{\viii}{${\sf {(iii)}}\;$}
\newcommand{\syn}{{{\mathbb{S}}_n}}
\newcommand{\g}{{\mathfrak{g}}}
\newcommand{\ff}{{\mathbf{f}}}
\newcommand{\Ga}{\Gamma }
\newcommand{\inv}{^{-1}}
\newcommand{\oo}{{\mathcal O}}
\newcommand{\wg}{\wt{SL}_n }
\renewcommand{\part}{{\f P}}
\newcommand{\sv}{{\mathfrak{s}\mathfrak{l}}}
\newcommand{\mnil}{{\mathscr{M}}_{\mathsf{nil}}}
\newcommand{\arr}{\overset{{\,}_\to}}
\newcommand{\rad}{\overset{{\,}_\circ}}
\newcommand{\BT}{{\mathbb T}}
\newcommand{\bt}{{\mathfrak t}}
\newcommand{\ld}{\ldots }
\def\C{{\mathbb{C}}}
\def\gl{{\mathfrak{g}\mathfrak{l}}}
\newcommand{\e}{{\mathsf{e}}}
\newcommand{\h}{{\mathfrak{h}}}
\newcommand{\sset}{\subset}
\newcommand{\cyc}{^{\oper{cyc}}}
\newcommand{\D}{{\scr D}}
\renewcommand{\P}{{\mathbb{P}}}
\newcommand{\X}{{\f X}}
\newcommand{\Xr}{{\f X}_{n,m}^{\operatorname{reg}}}
\newcommand{\wtxr}{{\wt{\f X}}_{n,m}^{\operatorname{reg}}}
\newcommand{\into}{{\,\hookrightarrow\,}}
\newcommand{\Z}{{\mathbb{Z}}}
\newcommand{\Id}{\operatorname{Id}}
\newcommand{\la}{{\lambda}}
\newcommand{\bx}{{\mathbf x}}
\newcommand{\trig}{{\operatorname{trig}}}
\newcommand{\BH}{{\mathbb H}}
\newcommand{\sH}{{\mathsf H}}
\newcommand{\oH}{{\overline{H}}}
\newcommand{\fx}{{\mathfrak x}}
\newcommand{\fy}{{\mathfrak y}}
\newcommand{\sy}{{\mathsf y}}
\newcommand{\BA}{{\mathbb A}}
\newcommand{\bD}{{\mathbf D}}
\newcommand{\CK}{{\mathcal K}}
\newcommand{\CE}{{\mathcal E}}
\newcommand{\sA}{{\mathsf A}}
\newcommand{\CB}{{\mathcal B}}
\newcommand{\CF}{{\mathcal F}}
\newcommand{\CG}{{\mathcal G}}
\newcommand{\CH}{{\mathcal H}}
\newcommand{\CL}{{\mathcal L}}
\newcommand{\BCL}{{\boldsymbol{\mathcal L}}}
\newcommand{\CN}{{\mathcal N}}
\newcommand{\CO}{{\mathcal O}}
\newcommand{\CU}{{\mathcal U}}
\newcommand{\BC}{{\mathbb C}}
\newcommand{\BO}{{\mathbb O}}
\newcommand{\BZ}{{\mathbb Z}}
\newcommand{\Vo}{V^\circ }
\newcommand{\Nnil}{{\mathbb{M}}_{\mathsf{nil}}}
\newcommand{\sL}{{\mathsf L}}
\newcommand{\bN}{{\mathbf U}}
\newcommand{\gen}{^{\heartsuit\!}}
\newcommand{\reg}{{\operatorname{reg}}}
\begin{document}

\title{{\textbf{
Cherednik algebras
for algebraic curves}}}

\author{\sc{Michael Finkelberg}}
\address{Institute for Information Transmission Problems, and 
Independent Moscow University, Bolshoj Vlasjevskij Pereulok, dom 11,
Moscow 119002 Russia}
\email{fnklberg@gmail.com}

\author{\sc{Victor Ginzburg}}
\address{Department of Mathematics, University of Chicago,
Chicago, IL 60637, USA}
\email{ginzburg@math.uchicago.edu}

\maketitle

%\vskip 5pt
\centerline{\em To George Lusztig with admiration}
\vskip 10pt

\begin{abstract}  For any algebraic curve $C$ and $n\geq 1$,
P. Etingof introduced  a `global'  Cherednik algebra
as a natural deformation of the cross product 
$\D(C^n)\rtimes \syn,$
 of the algebra of 
 differential operators on $C^n$ and the
symmetric group. We provide a construction
of the global Cherednik algebra in terms of quantum
Hamiltonian reduction. We study a category of {\em character
$\D$-modules} on a representation scheme associated to $C$
and define a Hamiltonian reduction functor from that
category to  category $\CO$ for the global
 Cherednik algebra.

In the special case of the curve $C=\C^\times$, the global Cherednik algebra
reduces to
the trigonometric Cherednik algebra
of type $\mathbf{A}_{n-1},$ and our  character
$\D$-modules become  holonomic $\D$-modules on
$GL_n(\C)\times \C^n$. The corresponding
perverse sheaves  are reminiscent of (and include as special cases)
Lusztig's 
{\em character sheaves}. 
\end{abstract}

\bigskip

\centerline{\sf Table of Contents}
\vskip -1mm

$\hspace{20mm}$ {\footnotesize \parbox[t]{115mm}{
\hp${}_{}$\!\hp1.{ $\;\,$} {\tt Introduction}\newline
\hp2.{ $\;\,$} {\tt A representation scheme}\newline
\hp3.{ $\;\,$} {\tt Cherednik algebras associated to algebraic curves}\newline
\hp4.{ $\;\,$} {\tt Character sheaves}\newline
\hp5.{ $\;\,$} {\tt The trigonometric case}
}}

%sss
\section{Introduction} 
\subsection{}
Associated with an integer $n\geq 1$ and an  algebraic curve $C$, there is
an interesting family, $\sH_{\kappa,\psi}$, of sheaves of associative
algebras on $C^{(n)}=C^n/\syn,$ the $n$-th symmetric power of $C$.
The algebras in question, referred to as
{\em global Cherednik algebras}, see \S\ref{chered},  are natural
 deformations of the cross-product $\D_\psi(C^n)\rtimes\syn$,
of the sheaf of (twisted) differential operators 
\footnote{We refer the reader to \cite{BB2} and \cite{K} for the
basics 
of the theory of twisted differential operators.}
on $C^n$ and the
symmetric group $\syn$ that acts on $\D_\psi(C^n).$
The algebras  $\sH_{\kappa,\psi}$ 
 were introduced by P. Etingof, \cite{Eti}, as `global counterparts' of
rational Cherednik algebras studied in \cite{EG}.

The global Cherednik algebra $\sH_{\kappa,\psi}$
contains an important {\em spherical subalgebra}
$\e\sH_{\kappa,\psi}\e$, where $\e$ denotes
the symmetriser idempotent in the group algebra
of the group $\syn$.
We generalize \cite{GG}, and prove that the algebra
$\e\sH_{\kappa,\psi}\e$ may
be obtained as a quantum Hamiltonian reduction of
$\D_{n\kappa,\psi}(\rep^n_C\times \P^{n-1})$, a sheaf of twisted
 differential operators on  $\rep^n_C\times \P^{n-1}$,
cf. Theorem \ref{sheaf eg}.

Our result provides a strong link between
categories of 
${\D_{n\kappa,\psi}(\rep^n_C\times \P^{n-1})}$-modules
and ${\sH_{\kappa,\psi}}$-modules, respectively.
Specifically, following the strategy of
\cite{GG}, \S7, we construct
 an exact functor
\begin{equation}\label{bh}
\BH:\ \Lmod{\D_{n\kappa,\psi}(\rep^n_C\times \P^{n-1})}\too\Lmod{\sH_{\kappa,\psi}},
\end{equation}
called the functor of Hamiltonian reduction.

\subsection{}\label{int1} In mid 80's, G. Lusztig introduced an important notion 
of  {\em character sheaf} on a  reductive
algebraic group $G$. In more detail,
write $\g$ for the Lie algebra of $G$ and  use the Killing form to
identify $\g^*\cong\g$. Let
$\CN\sset \g^*$ be the image of the set of nilpotent elements
in $\g,$ and let $G\times\CN\sset  G\times\g^*=T^*G$
be the {\em nil-cone} in the total space of the  cotangent bundle on $G$.

Recall further that, associated
with any perverse sheaf $M$ on $G$, one has its characteristic variety
$SS(M)\sset T^*G$. A character sheaf is, by definition,
an $\Ad G$-equivariant perverse sheaf
$M,$ on $G,$ such that the corresponding characteristic
variety  is nilpotent, i.e., such that we have
$SS(M)\sset G\times \CN.$

We will be interested in the special case  $G=GL_n$.
Motivated by the geometric Langlands conjecture,
G. Laumon, \cite{La}, generalized the notion of character
sheaf on $GL_n$ to the `global setting' involving an arbitrary
  smooth
algebraic curve. Given such a curve $C$,
Laumon replaces the  adjoint quotient stack $G/\Ad G$ by  $Coh^n_C,$ a
certain stack  of length $n$ coherent sheaves on $C.$
He then  defines a  {\em global
nilpotent subvariety} of the cotangent bundle
$T^*Coh^n_C,$ cf. \cite{La2}, and considers the class of
 perverse sheaves $M$
on  $Coh^n_C$ such that $SS(M)$ is contained in the  global
nilpotent subvariety.

In this paper, we introduce character sheaves on
$\rep^n_C\times\P^{n-1}$. Here, the scheme $\rep^n_C$ is
 an appropriate
Quot scheme of length $n$  sheaves on $C$,
a close cousin of  $Coh^n_C$, and   $\P^{n-1}$
is  an $n-1$-dimensional projective space.
In section \ref{lagrangian}, we define 
a   version of
`global nilpotent variety' $\Nnil\sset T^*(\rep^n_C\times\P^{n-1})$,
 and introduce a class of
 $\D$-modules
on  $\rep^n_C\times \P^{n-1}$,  called
 {\em character $\D$-modules}, which have a  nilpotent characteristic
 variety,
i.e., are such that  $SS(M)\sset\Nnil$, see Definition
\ref{characterDmod}.

The group  $G=GL_n$ acts on both
$\rep^n_C$ and  $\P^{n-1}$ in a natural way.
In  analogy with
the theory studied by Lusztig and Laumon,
perverse  sheaves associated to   character $\D$-modules
via the Riemann-Hilbert correspondence
 are
locally constant along  $G$-diagonal
orbits in  $\rep^n_C\times \P^{n-1}$.

A very special feature of the $G$-variety  $\rep^n_C\times \P^{n-1}$
is that the corresponding nilpotent variety, $\Nnil,$ turns out to be
a  {\em Lagrangian} subvariety in
$T^*(\rep^n_C\times\P^{n-1})$. This follows from a
geometric result saying that the
group $GL_n$ acts diagonally on $\CN\times \C^n$ with {\em finitely
many} orbits, \cite{GG}, Corollary 2.2. These orbits may be
parametrised by the pairs $(\lambda,\mu),$ of arbitrary partitions
$\lambda=\lambda_1+\ldots+\lambda_p$ and
$\mu=\mu_1+\ldots+\mu_q$, with total sum
$\lambda+\mu=n$, see \cite{AH} and  \cite{T}.

\subsection{}
Character  $\D$-modules play an important role
in representation theory of the global Cherednik  algebra ${\sH_{\kappa,\psi}}$.
In more detail, there is a natural analogue,
$\CO({\sH_{\kappa,\psi}}),$ of the Bernstein-Gelfand-Gelfand category $\CO$ for
the  global Cherednik algebra, see Definition \ref{catO}.
We show (Proposition \ref{ham_fun}) that the Hamiltonian reduction
functor \eqref{bh}
sends character $\D$-modules to objects of the category $\CO({\sH_{\kappa,\psi}}),$
moreover, the latter category gets identified,
via the functor $\BH$, with a quotient of the
former by the Serre subcategory $\Ker\BH$.

In  the special case
of the curve $C=\C^\times$, the global Cherednik
algebra
reduces to $\sH_{\kappa}$, the {\em trigonometric} Cherednik algebra
of type ${\mathbf{A}}_{n-1},$ see \S\ref{Trig} for definitions,
and the sheaves
considered by Laumon become
 Lusztig's character sheaves on the group $GL_n$.
Similarly, our 
character $\D$-modules become (twisted)  $\D$-modules on
$GL_n\times\P^{n-1}$.

Given  a character sheaf on $GL_n$ in the sense of Lusztig,
one may pull-back the corresponding $\D$-module via the first projection
$GL_n\times \P^{n-1}\to GL_n.$
The resulting $\D$-module on $GL_n\times \P^{n-1}$ is a character $\D$-module
in our sense. However,  there are many other quite interesting
character $\D$-modules on $GL_n\times \P^{n-1}$ which do not come from
Lusztig's character sheaves on~$GL_n.$

Sometimes, it is more convenient to replace $GL_n$ by its subgroup $SL_n$.
In that case, we prove  that
{\em cuspidal} character $\D$-modules
correspond, via the Hamiltonian 
functor, to finite
dimensional representations of the corresponding
Cherednik algebra, cf. Corollary \ref{findimcor}
and Theorem \ref{cusp_thm}.

\vskip -5pt
\subsection{Convention.} The trigonometric Cherednik algebra
depends on one complex parameter, to be denoted $\kappa\in\C$.
Such an algebra corresponds, via the
quantum Hamiltonian reduction construction,
to a  sheaf of twisted differential operators  on $SL_n\times\P^{n-1}$,
which is also   labelled by one complex parameter, to be denoted
 $c\in\C$.
Throughout the paper, we will use the normalization of the
above parameters $\kappa$ and $c$, such that
the sheaf of TDO with parameter $c$ gives rise to
the  trigonometric Cherednik algebra with parameter
\begin{eqnarray}\label{norm}
\kappa=c/n.
\end{eqnarray}
\subsection{Acknowledgements}{\footnotesize{
We are grateful to Pavel Etingof and Roman Travkin for useful discussions.
M.F. is grateful to the University of Chicago and Indiana University
at Bloomington for the hospitality and support. He was partially
supported by the CRDF award RUM1-2694, and the ANR program `GIMP',
contract number ANR-05-BLAN-0029-01. 
The work of V.G. was  partially supported by the NSF grant DMS-0601050.}}
\pagebreak[3]

\section{A representation scheme}
%sss

\subsection{}
\label{nezabudki} 
In this paper, we work over $\C$ and
 write $\Hom=\Hom_\C,\,\End=\End_\C,\,\o=\o_\C.$

Let $Y$ be a scheme  with
structure sheaf $\oo_Y$  and coordinate ring
~$\CO(Y).$

\begin{definition} Let $V$ be a finite
dimensional vector space, $\CF$ a  finite
 length (torsion) $\CO_Y$-sheaf, and
$\vartheta: V\iso\Gamma(Y,\CF),$  a vector space isomorphism.
Such a data $(V,\CF,\vartheta)$ is called
a {\em representation} of $\CO_Y$ in $V.$
\end{definition}

\begin{proposition}\label{rep} \vi For each $n\geq 1,$ there is a $GL_n(\C)$-scheme
$\rep^n_Y$, of finite type, that parametrizes representations
of $\oo_Y$ in $\C^n.$

\vii If $Y$ is affine then we have
$\rep^n_Y=\rep^n \CO(Y)$, is the affine scheme that parametrizes
algebra homomorphisms $\CO(Y)\to\End \C^n$.
\end{proposition}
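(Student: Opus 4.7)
The plan is to prove (ii) directly and then bootstrap to (i) by gluing over an affine cover. For (ii), I would establish the bijection between representations $(V,\CF,\vartheta)$ of $\CO_Y$ with $V=\C^n$ and algebra homomorphisms $\rho\colon R\to \End_\C\C^n$, where $R=\CO(Y)$. Given $(V,\CF,\vartheta)$, transport the $R$-module structure on $\Gamma(Y,\CF)$ along $\vartheta$ to get $\rho$. Conversely, given $\rho$, the quasicoherent sheaf $\wt V$ associated to the $R$-module $V$ has finite length, because $R/\Ker\rho \hookrightarrow \End V$ is a finite-dimensional $\C$-algebra, forcing $\wt V$ to have finite (zero-dimensional) support. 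These constructions are mutually inverse. The scheme $\Hom_{\mathrm{alg}}(R,\End\C^n)$ is represented, upon choosing generators $x_1,\ldots,x_k$ of $R$ and their defining relations, by the closed subscheme of $(\End\C^n)^k$ cut out by the matrix evaluations of those relations; this is affine of finite type, with $GL_n$ acting by simultaneous conjugation.

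For (i), choose an affine open cover $\{U_\alpha\}$ of $Y$ refined so that each $U_\alpha\cap U_\beta$ is affine and principal in both factors (automatic for $Y$ separated; otherwise refine further). Part (ii) produces schemes $\rep^n U_\alpha$ and $\rep^n(U_\alpha\cap U_\beta)$. Writing $U_\alpha\cap U_\beta=(U_\alpha)_f$ for some $f\in\CO(U_\alpha)$, the natural map $\rep^n(U_\alpha\cap U_\beta)\to \rep^n U_\alpha$ induced by precomposition with $\CO(U_\alpha)\to \CO(U_\alpha)[f^{-1}]$ identifies its source with the open subscheme of $\rep^n U_\alpha$ on which $\rho(f)\in\End\C^n$ is invertible. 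One then glues the $\rep^n U_\alpha$ along these open subschemes; the cocycle condition is automatic from the moduli-theoretic description. The resulting $\rep^n_Y$ carries a $GL_n$-action compatible on each chart, and a finite affine subcover ensures finite type.

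The principal technical point is verifying the open-immersion claim used for gluing: for $U'=U_f\subset U$ principal affine, a representation $\rho\colon \CO(U)\to \End\C^n$ extends to $\rho'\colon \CO(U)[f^{-1}]\to \End\C^n$ iff $\rho(f)$ is invertible, the extension being unique by the universal property of localization; existence holds because $\rho$ factors through the finite-dimensional algebra $\rho(R)\subset \End\C^n$, in which inverting an already-invertible element is trivial. Alternatively, one could sidestep the gluing by identifying $\rep^n_Y$ with the Grothendieck Quot scheme $\operatorname{Quot}^n(\CO_Y^{\oplus n})$ of length-$n$ quotients: any such quotient $\CO_Y^n\twoheadrightarrow \CF$ has $0$-dimensional support, on which $\Gamma$ is exact, so the induced surjection $\C^n\twoheadrightarrow \Gamma(Y,\CF)$ matches dimensions and is automatically the isomorphism $\vartheta$; hence the two moduli functors coincide.
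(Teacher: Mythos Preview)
Your treatment of (ii) is fine. The difficulties are both in part (i).

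\textbf{The gluing does not cover $\rep^n_Y$.} A point of $\rep^n_Y$ is a length-$n$ torsion sheaf $\CF$ whose support may consist of several points lying in \emph{different} members $U_\alpha$ of your cover. Such an $\CF$ does not lie in any single $\rep^n U_\alpha$, because $\CF|_{U_\alpha}$ has length strictly less than $n$. So although each $\rep^n(U_\alpha\cap U_\beta)\hookrightarrow\rep^n U_\alpha$ is an open immersion, the glued scheme $\bigcup_\alpha \rep^n U_\alpha$ is strictly smaller than $\rep^n_Y$; it only sees representations with support contained in a single chart. (Concretely: on $Y=\P^1$ with the standard two-chart cover and $n=2$, a skyscraper at $0$ plus a skyscraper at $\infty$ is missed.) This gluing strategy cannot be repaired without bringing in products like $\rep^{k}U_\alpha\times\rep^{n-k}U_\beta$ and an induction along $GL_k\times GL_{n-k}\subset GL_n$, which is essentially rebuilding the Quot scheme by hand.

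\textbf{The Quot alternative needs the open condition.} Your final paragraph is close to the paper's actual proof, but the identification $\rep^n_Y=\operatorname{Quot}^n(\CO_Y^{\oplus n})$ is false as stated. The map ``$\C^n\twoheadrightarrow\Gamma(Y,\CF)$'' you invoke is not in general a surjection: exactness of $\Gamma$ on torsion sheaves does not help, since $\CO_Y^{\oplus n}$ is not torsion, and the induced map on sections is $\Gamma(Y,\CO_Y)^n\to\Gamma(Y,\CF)$, not $\C^n\to\Gamma(Y,\CF)$. For example, on $Y=\BA^1$ with $n=2$, the surjection $\CO_Y^2\twoheadrightarrow\CO_Y/(t^2)$ given by $(f,g)\mapsto f\bmod t^2$ has length-$2$ target, yet the constants $\C^2$ land in the one-dimensional subspace $\C\subset\C[t]/(t^2)$. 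The paper's proof is precisely your alternative with this correction: $\rep^n_Y$ is the \emph{open} subscheme of $\operatorname{Quot}^n(\CO_Y^{\oplus n})$ where the composite $\C^n\hookrightarrow\Gamma(Y,\CO_Y^{\oplus n})\twoheadrightarrow\Gamma(Y,\CF)$ is an isomorphism, with $GL_n$ acting by changing $\vartheta$.
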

\begin{proof} The scheme $\rep^n_Y$
may be identified with
an open subscheme of  Grothendieck's Quot-scheme that
parametrizes   surjective morphisms
$\C^n\otimes\CO_Y\twoheadrightarrow\CF$ such that the
composition $\C^n\hookrightarrow\Gamma(Y,\C^n\otimes\CO_Y)$
$\onto\Gamma(Y,\CF)$
is an isomorphism. 
The group $GL_n$ acts on $\rep^n_Y$ by base change transformations,
that is, by changing the isomorphism $\C^n\cong\Gamma(Y,\CF)$.
\end{proof}

Below, we fix $n\geq 1$, let $V=\C^n,$ and $GL(V)=GL_n(\C)$. 
Given a representation $(\CF, \vartheta)$ in $V$,
the isomorphism
$\vartheta: V\iso\Gamma(Y,\CF)$ 
makes the vector space $V$ an
$\CO(Y)$-module.
We often drop 
$\vartheta$ from the notation and
 write $\CF\in \rep^n_Y$.

\begin{example}\label{C=C} For $Y:=\BA^1$, we have $\CO(Y)=\C[t]$, hence
$\rep^n_Y=\rep^n\C[t]=\End V$. 

 For $Y:=\C^\times$, we have $\CO(Y)=\C[t,t\inv]$, hence
$\rep^n_Y=\rep^n\C[t,t\inv]= GL(V)$. 
In these examples, the
action of the
group $GL(V)$ is  the conjugation-action.
\end{example}

\subsection{Factorization property.}
From now on, we will be concerned
exclusively with the case where $Y=C$ 
is a  smooth  algebraic curve
(either affine or complete).

Let $\syn$ denote the symmetric group.
Write $C^n$ and $C^{(n)}=C^n/\syn$ for the $n$-th cartesian and
symmetric power of $C$, respectively.
Taking support cycle of a length $n$
coherent  sheaf on $C$ gives a natural
map
\begin{equation}\label{supp}
\supp:\ \rep^n_C\too C^{(n)},\quad
\CF\mto \Supp\CF.
\end{equation}

It is straightforward to see that the map $\supp$ is
an affine and surjective morphism of schemes and that
the group $GL(V)$ acts along the fibers of $\supp$.

The collection of morphisms \eqref{supp} for
various values of the integer $n$ enjoy an important
 {\em factorization property}. Specifically, let $k,m,$ be
a pair of  positive integers  and
 $D_1\in C^{(k)},\ D_2\in C^{(m)}$, $|D_1|\cap |D_2|=\emptyset,$
a pair of divisors with disjoint
supports, that is, a pair
of unordered collections of $k$  and  $m$ points
of $C$, respectively, which
have no points in common.
The factorization property says that there is a natural isomorphism
$$\supp^{-1}(D_1+D_2)\cong
GL_{k+m}\overset{GL_k\times GL_m}{\times}
\big(\supp^{-1}(D_1)\times\supp^{-1}(D_2)\big).
$$

The factorization property also holds in families. 
To explain this, let
${\gen}C^{(k,m)}\sset C^{(k)}\times C^{(m)}$ be the open 
subset formed by all
pairs of divisors with disjoint support. There is a 
natural composite projection ${\gen}C^{(k,m)}\into C^{(k)}\times C^{(m)}\onto C^{(k+m)}$. 

The family version of the factorization property reads
\begin{equation}\label{fac}
\Quot^{k+m}_C\underset{C^{({k+m})}}\times {\gen}C^{(k,m)}\simeq
\left(GL_{k+m}\overset{GL_k\times GL_m}{\times}(\Quot^k_C\times\Quot^m_C)
\right)\underset{{C^{(k)}\times C^{(m)}}}\times  {\gen}C^{(k,m)}.
\end{equation}

\subsection{Tangent and cotangent bundles.}
Let $\Om_C$ be the sheaf of K\"ahler differentials on $C$.
Given a finite length sheaf $\CF$ we let
$\CF^\vee:=\CE xt^1(\CF, \Om_C)$, denote the Grothendieck-Serre
dual of $\CF$. Let $\jmath: C\into \bar{C}$ be an open imbedding of $C$
into a complete curve. Then
we have natural isomorphisms
\begin{equation}\label{serre}
\Ga(C, \CF^\vee)=
\Ext^1(\bar{\CF},\Om_{\bar{C}})\cong
\Hom(\oo_{\bar{C}},\bar{\CF})^*=
\Ga(C,\CF)^*,
\end{equation}
where we write $\bar{\CF}=\jmath_*\CF$ and
where the  isomorphism in the middle is provided by the
Serre duality. In particular,
for any $\CF\in\rrep$, one has canonical
isomorphisms
\begin{equation}\label{can}
\Ga(C^2,
\CF\boxtimes\CF^\vee)=
\Ga(C,\CF)\o \Ga(C,\CF^\vee)\stackrel{\vartheta}\iso
V\o V^*=\End V.
\end{equation}

 Let $\Delta\sset C^2:=C\times C$
denote the diagonal divisor. Thus, on $C^2$,
we have a triple
of sheaves $\oo_{{C^2}}(-\De)\into\oo_{{C^2}}
\into\oo_{{C^2}}(\De)$. 
For any $\CF\in\rrep$, let $\CK er_\CF$ denote the
kernel of the composite morphism
$V\o\oo_C\iso \Ga(C,\CF)\o\oo_C \onto \CF.$

\begin{lemma}\label{tangent} The scheme $\rep^n_C$ is smooth.
For the tangent, resp. cotangent,
 space at a point $\CF\in \rrep,$ there are canonical isomorphisms:
\begin{align*}
T_\CF(\rrep)\;&\cong\en\en \Hom(\CK er_{_\CF}, \CF)\en\en\cong\en\en
\Ga\big(C^2, (\CF\boxtimes\CF^\vee)(\Delta)\big),\quad\oper{resp.}\\
T^*_\CF(\rrep)\;&\cong \;\Ext^1(\CF,\CK er_{_\CF}\otimes\Omega_C)
\;\cong\; \Ga\big(C^2,(\CF\boxtimes\CF^\vee)(-\De)\big).
\end{align*}
\end{lemma}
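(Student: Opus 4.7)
The plan is to derive smoothness and the first tangent identification from Grothendieck's deformation theory of Quot schemes. By Proposition~\ref{rep}, $\rrep$ is an open subscheme of the Quot scheme parametrizing length-$n$ quotients of $V\otimes\oo_C$; standard deformation theory identifies the tangent space at $[V\otimes\oo_C\twoheadrightarrow\CF]$ with $\Hom_{\oo_C}(\CK er_\CF,\CF)$ and the obstructions with $\Ext^1_{\oo_C}(\CK er_\CF,\CF)$. Since $C$ is a smooth curve, $\CK er_\CF\subset V\otimes\oo_C$ is locally free, so $\Ext^1(\CK er_\CF,\CF)=H^1(C,\CK er_\CF^\vee\otimes\CF)$; this vanishes because the integrand is a torsion sheaf on a curve (by Serre's theorem in the affine case, and because torsion sheaves have vanishing higher cohomology on a complete curve). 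This yields smoothness together with the first tangent isomorphism $T_\CF\rrep\cong\Hom(\CK er_\CF,\CF)$.

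Next, to identify $\Hom(\CK er_\CF,\CF)$ with $\Ga(C^2,(\CF\boxtimes\CF^\vee)(\Delta))$, I would reduce to a local computation at each point of $\Supp\CF$ (respectively $\Supp\CF\times\Supp\CF$). Off-diagonal, at $(x,y)$ with $x\neq y$, the twist by $\oo(\Delta)$ is trivial and the contribution is $\CF_x\otimes\CF_y^\vee$; these pieces match the image of the natural map $\End V\to\Hom(\CK er_\CF,\CF)$ coming from the $\Hom(-,\CF)$ long exact sequence of $0\to\CK er_\CF\to V\otimes\oo_C\to\CF\to 0$, via the identification $\End V=V\otimes V^*\cong\Ga(C^2,\CF\boxtimes\CF^\vee)$ of~\eqref{can}. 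At the diagonal points $(x,x)$ the pole along $\Delta$ produces, through the residue isomorphism $\oo_{C^2}(\Delta)|_\Delta\cong T_C$, an extra local piece $(\CF\otimes\CF^\vee\otimes T_C)_x$; I would identify this with the cokernel of $\End V\to\Hom(\CK er_\CF,\CF)$, namely $\Ext^1(\CF,\CF)$, by an explicit calculation in a local uniformizer at $x$.

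For the cotangent side, the isomorphism $T^*_\CF\rrep\cong\Ext^1(\CF,\CK er_\CF\otimes\Omega_C)$ comes from Serre-Grothendieck duality on $C$: with $\CK er_\CF$ locally free and $\CF$ torsion, the Yoneda pairing composed with the trace map $\Ext^1(\CK er_\CF,\CK er_\CF\otimes\Omega_C)\to H^1(C,\Omega_C)\cong\C$ is perfect, the affine case being reduced to the complete curve $\bar C$ via~\eqref{serre}. The second isomorphism with $\Ga(C^2,(\CF\boxtimes\CF^\vee)(-\Delta))$ then follows by dualizing the tangent identification, where the sign flip of $\Delta$ arises from the dualizing sheaf $\Omega_C\boxtimes\Omega_C$ of $C^2$ together with $\oo_{C^2}(\Delta)^\vee=\oo_{C^2}(-\Delta)$.

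The main technical obstacle is the on-diagonal part of the box-product identification: matching the $T_C$-contribution from the residue along $\Delta$ with the $\Ext^1(\CF,\CF)$ piece in $\Hom(\CK er_\CF,\CF)$ requires carefully distinguishing the $\oo_C$-action on $\CF$ from the $\oo_{C^2}$-action on $\CF\boxtimes\CF^\vee$ and tracing the local presentations through; the affine versus complete dichotomy for $C$ is then handled uniformly via~\eqref{serre}.
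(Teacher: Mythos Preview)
Your deformation-theoretic identification $T_\CF\rrep\cong\Hom(\CK er_\CF,\CF)$ and the Serre-duality argument for the cotangent are correct and match the paper. For smoothness you argue via obstruction vanishing ($\Ext^1(\CK er_\CF,\CF)=0$ since $\CK er_\CF$ is locally free and $\CF$ torsion), which is fine; the paper instead deduces smoothness at the end by showing $\dim T_\CF\rrep=n^2$ is constant.

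The real divergence is in the second isomorphism $\Hom(\CK er_\CF,\CF)\cong\Ga(C^2,(\CF\boxtimes\CF^\vee)(\Delta))$. You propose a local decomposition into off-diagonal and on-diagonal pieces, matching the latter against $\Ext^1(\CF,\CF)$ via a residue computation; you correctly flag this as the delicate step. The paper bypasses this entirely with an algebraic trick: reducing to $C$ affine with $A=\CO(C)$, it exhibits a chain of isomorphisms
\[
\Hom_A(K,V)\;\cong\;\Der_A(A,\End V)\;\cong\;\Ga\big(C^2,(\CF\boxtimes\CF^\vee)(\Delta)\big).
\]
The first sends $f\in\Hom_A(K,V)$ to the derivation $a\mapsto\big(v\mapsto f(a\otimes v-1\otimes av)\big)$; the second sends a section $s$ of $(\CF\boxtimes\CF^\vee)(\Delta)$ to the derivation $a\mapsto s\cdot(a\otimes1-1\otimes a)$, well-defined because $a\otimes1-1\otimes a$ vanishes on $\Delta$. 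This avoids any explicit local parametrix or residue bookkeeping, and the cotangent formula then follows purely by duality. Your route should work, but the derivation bridge is both shorter and canonical; it is worth knowing.
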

\begin{proof}
It is well known that the Zariski tangent space to the
scheme parametrizing surjective morphisms $V\o\oo_C\onto \CF$
is equal to the vector space $\Hom(\CK er_{_\CF}, \CF)$.
This proves the first isomorphism.
From this, by Serre duality
one obtains  $T^*_\CF(\rrep)\cong \Ext^1(\CF,\CK
er_{_\CF}\otimes\Omega_C)$.

To complete the proof, it suffices
to prove the second formula for the tangent space  $T_\CF(\rrep)$;
the corresponding formula for  $T^*_\CF(\rrep)$
would then follow by duality.

To prove the  formula for $T_\CF(\rrep)$, we may assume that $C$ is affine
and put $A=\oo(C)$. Also, let $V=\Ga(C,\CF)$ and $K=\Ga(C,\CK er_\CF)$.
Thus, we have $\Hom(\CK er_{_\CF}, \CF)=\Hom_A(K,V)$.

The sheaf extension
$\CK er_\CF\into V\o\oo_C \onto \CF$ yields a short exact
sequence of $A$-modules
$$0\to K\too A\o V\stackrel{\act}\too V\to 0,$$
where $\act$ is the action-map.

The action of $A$ on $V$ makes the vector space $\End V$
an $A$-bimodule. We write $\Der_A(A, \End V)$ for
the space of derivations of the algebra $A$ with coefficients
in the $A$-bimodule $\End V$. Further,
given an $A$-module map $f: K\to V$, for any element
$a\in A$ we define a linear
map 
$$\delta_f(a): V\to V,\quad v\mto f(a\o v -1\o av)$$
It is straightforward to see that the assignment
$a\mto \delta_f(a)$ gives a derivation $\delta_f\in \Der_A(A, \End V)$.
Moreover,  this way, one obtains a canonical isomorphism
$$
\Hom_A(K,V)\iso \Der_A(A, \End V),
\quad f\mto \delta_f.
$$

The above isomorphism provides a well known alternative interpretation
of the tangent space to a representation scheme in the form
$T_\CF(\rrep)\cong \Der_A(A, \End V).$

We observe next that, for any
$a\in A$, one has $a\o 1-1\o a\in \Ga(C^2,\oo_{C^2}(-\Delta))
\sset A\o A$.
Further, for any section
$s\in \Ga\big(C^2,  (\CF\boxtimes\CF^\vee)(\Delta)\big),$
the map
$$A\too \Ga\big(C^2,\CF\boxtimes\CF^\vee)=V^*\o V=\End V,
\quad a\mto \xi^s(a):=s\cdot(a\o 1-1\o a)
$$
is easily seen to be a derivation. Moreover,  one shows  that any
derivation has the form $\xi^s$ for a unique section $s$.

Thus, combining all the above, we deduce the desired canonical isomorphisms
$$
T_\CF(\rrep)\cong\Hom_A(K,V)\cong\Der_A(A, \End
V)\cong
\Ga\big(C^2,  (\CF\boxtimes\CF^\vee)(\Delta)\big).
$$

In particular, for any $\CF\in\rrep$, we find that 
$$\dim T_\CF(\rrep)=\dim \Ga\big(C^2,
(\CF\boxtimes\CF^\vee)(\Delta)\big)=\dim(V\o V^*)=n^2,
$$
is independent of $\CF$. This implies that the scheme
$\rrep$ is smooth. 
\end{proof}

\subsection{Example: formal disc.}\label{point} Given a formal power series $f=f(t)\in\C[[t]]$,
we define
the {\em difference-derivative} of $f$ as a power series
$\dis Df(t',t''):=\frac{f(t')-f(t'')}{t'-t''}\in\C[[t',t'']]$.

For any $X\in \End V,$ write
$L_X,\, R_X:\ \End V\to \End V$, for the pair of  linear maps of
left, resp. right, multiplication by $X$ in the algebra $\End V$.
 These   maps commute, hence, for any polynomial $f\in\C[t]$,
there is a well-defined linear operator $Df(L_X,R_X):\ \End V\to \End V.$
For instance, in the special case
where $f(t)=t^m$, we find
$Df(L_X,R_X)(Y)=X^{m-1}Y+X^{m-2}YX+\ldots+XYX^{m-2}+~YX^{m-1}.$

Let $C_x$ be the completion
 of
a smooth curve $C$
at a point $x\in C$,  a formal scheme.
 A choice of local
parameter on $C_x$ amounts to a choice
of algebra isomorphism $\oo(C_x)=\C[[t]].$
Thus, for the corresponding representation schemes, we obtain
$\rep^n\oo(C_x)\cong\rep^n\C[[t]]=\wh{\CN}$, the completion of $\End V$
along the closed subscheme
$\CN\sset \End V$, of nilpotent
endomorphisms.

The tangent space at a
point of $\rep^n\oo(C_x)=\wh{\CN}$ may be therefore identified
with the vector space $\End V$.
More precisely, the tangent bundle
 $T(\rep^n\CO(C_x))$ is the completion of $\End V\times \End V$ along
 $\CN\times\End V$. 
Abusing the notation slightly, we may
write a point in the  tangent bundle as a pair
 $(X,Y)\in \End V\times \End V$, where
$X\in \rep^n\oo(C_x)=\wh{\CN},$ and $Y$ is a tangent vector at $X$.
Thus, associated with such a point $(X,Y)$ 
 and any  formal power series $f\in\C[[t]],$
there is a well-defined linear operator $Df(L_X,R_X)$,
acting on an appropriate completion of $\End V$.

Now,
let $t$ and $u$ be two different local  parameters on the formal scheme $C_x$.
Thus, one can write
$u=f(t)$, for some $f\in\C[[t]]$. We have

\begin{lemma}  The differential of $f$ acts
on the tangent bundle $T(\rep^n\CO(C_x))$ by the formula
$$df: \ (X,Y)\mto \big(f(X),\,Df(L_X,R_X)(Y)\big).
\eqno\Box$$
\end{lemma}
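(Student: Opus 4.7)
The plan is to unwind what the map "$f$" really is on $\rep^n\oo(C_x)$, compute its differential using dual numbers, and recognize the resulting expression as $Df(L_X,R_X)$.

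First I would identify the geometric morphism whose differential appears in the statement. A representation in $V$ is an algebra map $\rho:\oo(C_x)\to \End V$. The two local parameters $t,u$ yield two identifications $\oo(C_x)\cong\C[[t]]$ and $\oo(C_x)\cong\C[[u]]$, hence two identifications of $\rep^n\oo(C_x)$ with $\wh{\CN}$: via $t$ the point is $X_t:=\rho(t)$, via $u$ it is $X_u:=\rho(u)=\rho(f(t))=f(\rho(t))=f(X_t)$. So the coordinate change is the morphism $\wh{\CN}\to\wh{\CN}$, $X\mapsto f(X)$, and the lemma is the computation of its differential. (The expression $f(X)$ is well defined because $\wh{\CN}$ is the completion along the nilpotents and $f\in\C[[t]]$.)

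Next I would compute the differential via dual numbers. A tangent vector $Y\in T_X(\rep^n\oo(C_x))\cong \End V$ is encoded by the lift $X+\eps Y\in \End V\o\C[\eps]/\eps^2$, and the pushforward vector at $f(X)$ is the $\eps$-part of $f(X+\eps Y)$. Writing $f(t)=\sum_m a_m t^m$, a direct expansion gives
\begin{equation*}
(X+\eps Y)^m\;\equiv\; X^m + \eps\sum_{i+j=m-1}X^i Y X^j\pmod{\eps^2},
\end{equation*}
and summing against $a_m$ yields
\begin{equation*}
f(X+\eps Y)\;\equiv\; f(X) + \eps\,\sum_m a_m\!\!\sum_{i+j=m-1}\!\! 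X^i Y X^j\pmod{\eps^2}.
\end{equation*}

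Finally I would match the coefficient of $\eps$ with $Df(L_X,R_X)(Y)$. Since $L_X$ and $R_X$ commute, one can evaluate any polynomial in two commuting variables at $(L_X,R_X)$; for a monomial $t^m$ one has $Dt^m(t',t'')=\sum_{i+j=m-1}(t')^i(t'')^j$, and applying $L_X^i R_X^j$ to $Y$ produces exactly $X^i Y X^j$. Extending by ($\eps$-adic) linearity to the power series $f$, the $\eps$-part above is precisely $Df(L_X,R_X)(Y)$, which finishes the proof.

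The only substantive issue, rather than an obstacle, is the convergence bookkeeping: one must check that the various sums make sense in the completion of $\End V$ along $\CN$ (equivalently, on the formal scheme $\rep^n\C[[t]]$) so that the dual-number computation is genuinely a computation in $T(\rep^n\oo(C_x))$. This is automatic because for $X\in\wh{\CN}$ the operators $L_X,R_X$ are topologically nilpotent on the relevant completion of $\End V$, so $f(X)$ and $Df(L_X,R_X)$ converge there.
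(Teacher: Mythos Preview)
The paper does not give a proof of this lemma at all: it is stated with a closing $\Box$ and left to the reader. Your argument is correct and is exactly the natural way to fill in the omitted proof---identify the coordinate change on $\rep^n\oo(C_x)$ as $X\mapsto f(X)$, then compute its differential by a dual-number expansion of $(X+\eps Y)^m$ and match the result against the difference-derivative identity $Dt^m(t',t'')=\sum_{i+j=m-1}(t')^i(t'')^j$.
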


\begin{remark} 
In the special case where $f(t)=e^t$, writing  $\Phi(t):=(e^t-1)/t$, we 
compute
$$\frac{f(t')-f(t'')}{t'-t''}=\frac{e^{t'}-e^{t''}}{t'-t''}=
\frac{e^{t'-t''}-1}{t'-t''}\cd e^{t''}=\Phi(t'-t'')\cd e^{t''}.
$$
Therefore, for the differential
of the exponential map $X\mto \exp X,$ using the lemma
and the notation $\ad X(Y):=XY-YX=(L_X-R_X)(Y)$,
one recovers the standard formula
$$d\exp: \ (X,Y)\mto \big(f(X),\,Df(L_X,R_X)(Y)\big)=
\left(e^X,\,\Phi(\ad X)(Y)\cd e^X\right).\qquad\Box
$$
\end{remark}

\subsection{\'Etale structure.} The assignment $C\mto \rep_C^n$
is a functor. Specifically,
for any morphism of curves $ p:\ C_1\to C_2,$  the push-forward
of coherent sheaves induces a $GL_n$-equivariant
morphism $ \rep p:\
\rep_{C_1}^n\to \rep_{C_2}^n,\ \CF\mapsto p_*\CF.$

The following result says that, for any smooth curve $C$, the
representation scheme $\rep^n_C$ is locally isomorphic, in \'etale topology,
to $\rep^n_{\BA^1}={\mathfrak{gl}}_n,$ the
corresponding scheme for the curve $C=\BA^1$.

\begin{proposition}\label{etale_prop} For any smooth curve $C$ and
$n\geq 1,$ one can find a finite collection of open sub-curves
$C_s\sset C,\, s=1,\ldots,r,$ such that the following holds.

For each $s=1,\ldots,r,$ there is a Zariski open subset $U_s\sset \rep_{C_s}^n,$
and a diagram of morphisms of curves
$$
\xymatrix{
C\;&&\;C_s\;\ar@{_{(}->}[ll]_<>(0.5){q_s}\ar[rr]^<>(0.5){p_s}&&\;\BA^1,}
\quad s=1,\ldots,r,
$$
that gives rise to a diagram of representation schemes
$$
\xymatrix{
\rep_C^n\;&&\;\rep_{C_s}^n\supset U_s\;\ar@{_{(}->}[ll]_<>(0.5){\rep
q_s}\ar[rr]^<>(0.5){\rep p_s}&&\;\rep^n_{\BA^1}={\mathfrak{gl}}_n.}
$$

The above data satisfies, for each $s=1,\ldots,r,$ 
 the following properties:
\vskip 3pt

\pb{The map $q_s$ is an open imbedding, and  $\rep_C^n=(\rep
q_1)(U_1)\cup\ldots \cup(\rep
q_s)(U_s)$ is an open cover;}
\vskip 1pt

\pb{The map $p_s$ is \'etale and the restriction of the morphism
$\rep p_s$ to $U_s$  is \'etale as well.}
\end{proposition}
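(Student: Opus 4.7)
The plan is to exploit the fact that a smooth curve is étale-locally isomorphic to $\BA^1$, and to propagate this property to the representation schemes via the support map $\supp$ of \eqref{supp}.

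\medskip

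First I would establish the following local assertion: for every finite subset $S \sset C$ with $|S| \leq n$, there exist an open subscheme $C_S \supset S$ and an étale morphism $p_S \colon C_S \to \BA^1$ such that $p_S|_S$ is injective. To produce these, for each $x \in S$ choose an affine open neighborhood $V_x \ni x$ and a function $f_x \in \oo(V_x)$ whose differential at $x$ is nonzero. After shrinking $V_x$ to the étale locus of $f_x$ and then further shrinking so that the $V_x$ are pairwise disjoint, set $C_S := \bigsqcup_{x \in S} V_x$ and $p_S|_{V_x} := f_x + c_x$, with constants $c_x \in \C$ chosen so that the values $p_S(x),\, x \in S,$ are pairwise distinct.

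\medskip

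Next, given such a pair $(C_s, p_s)$, I observe that the symmetric-power morphism $p_s^{(n)} \colon C_s^{(n)} \to (\BA^1)^{(n)}$ is étale on the Zariski-open subset $W_s \sset C_s^{(n)}$ consisting of divisors $\sum n_i[x_i]$ (with the $x_i$ pairwise distinct) whose images $p_s(x_i)$ are also pairwise distinct. I would define $U_s := \supp^{-1}(W_s)$. The key assertion is that the restriction $\rep p_s \colon U_s \to \rep^n_{\BA^1}$ is étale. Indeed, for $\CF \in U_s$ with $\Supp\CF = \{x_1,\ldots,x_r\}$, the factorization property \eqref{fac} provides, in an étale neighborhood of $\CF$, an identification of $\rep^n_{C_s}$ with a product of local factors $\prod_i \rep^{n_i}_{V_{x_i}}$, where $n_i$ denotes the length of $\CF$ at $x_i$; the analogous identification holds for $\rep^n_{\BA^1}$ near $(p_s)_*\CF$ precisely because the images $p_s(x_i)$ are distinct. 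Under these identifications, $\rep p_s$ becomes the product of local maps $\rep^{n_i}_{V_{x_i}} \to \rep^{n_i}_{\BA^1}$ induced by $p_s|_{V_{x_i}}$, and each such local map is étale because an étale morphism of smooth curves induces an isomorphism of formal completions $\wh\oo_{C_s,x_i} \iso \wh\oo_{\BA^1,p_s(x_i)}$, which in turn identifies the associated formal Quot schemes (compare Example \ref{point}).

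\medskip

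Finally, to assemble the finite cover, for each $\CF \in \rep^n_C$ the local construction with $S := \Supp\CF$ yields an open neighborhood $(\rep q_S)(U_S) \ni \CF$ in $\rep^n_C$. Since $\rep^n_C$ is of finite type over $\C$, hence quasi-compact, a finite subfamily $s = 1, \ldots, r$ suffices. The main obstacle is the étaleness of $\rep p_s$ on $U_s$: once distinctness of the images $p_s(x_i)$ is arranged, the global statement reduces via factorization to the essentially formal assertion that an étale morphism of smooth curves induces an étale morphism of length-$m$ Quot schemes, which can be confirmed at the level of tangent spaces via Lemma \ref{tangent}, using that an étale map $p$ yields an isomorphism $p^*\Om_{\BA^1} \iso \Om_{C_s}$ and hence an isomorphism of the Serre-dual tangent space descriptions given there.
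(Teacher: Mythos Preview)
Your overall strategy matches the paper's: produce, for each finite support configuration, an \'etale map from an open subcurve to $\BA^1$ separating the support points, then define $U_s$ as the preimage under $\supp$ of divisors whose support points have distinct images, and verify \'etaleness of $\rep p_s$ on $U_s$ by reducing to formal completions. The second and third steps are correct and essentially coincide with the paper's Lemma~\ref{etal} and the quasi-compactness argument.

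There is, however, a genuine gap in your first step. You propose to shrink the neighborhoods $V_x$, $x\in S$, so that they become pairwise disjoint. On an irreducible curve this is impossible: any two nonempty Zariski open subsets meet (each is the complement of a finite set). Thus your $C_S=\bigsqcup_{x\in S}V_x$ cannot be realized as an open subscheme of $C$ with the map $q_S$ an open imbedding, which is exactly what the proposition requires. Taking the abstract disjoint union instead would only give an \'etale map $C_S\to C$, not an open imbedding.

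The paper's Lemma~\ref{etale} circumvents this by a different construction: remove a point to make $C$ affine, embed $C\sminus\{x\}\hookrightarrow\BA^m$, and take a generic linear projection $\BA^m\to\BA^1$. A generic projection separates the given finite set $S$ and is unramified at each point of $S$; deleting the (finite) ramification locus yields a single connected open $C'\subset C$ containing $S$ together with an \'etale $p\colon C'\to\BA^1$ injective on $S$. Once you replace your first step with this argument (or any other producing one function on one open set), the rest of your proof goes through.
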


This proposition is an immediate consequence of
two lemmas below. Fix a smooth curve $C$.

\begin{lemma}
\label{etale}
Given a collection of pairwise distinct points $c_1,\ldots,c_N\in C$ there is
an open subset ${C'}\subset C$ and an \'etale
morphism $ p:\ {C'}\to\BA^1$ such that $c_1,\ldots,c_N\in{C'}$ and all the values
$ p(c_1),\ldots, p(c_N)$ are  pairwise distinct.
\end{lemma}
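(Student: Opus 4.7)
The plan is to build $p$ as a single rational function on $C$ with prescribed $1$-jets at the $c_i$, via a Riemann--Roch/Serre vanishing argument on a projective completion. First, choose a smooth projective completion $\bar C\supset C$, and pick an auxiliary point $x_0\in C\sminus\{c_1,\ld,c_N\}$ (possible since $C$ is infinite). Put $S:=\{x_0\}\cup(\bar C\sminus C)$, a finite set disjoint from $\{c_1,\ld,c_N\}$, and consider the divisors $D_d:=d\cd\sum_{s\in S}s$ on $\bar C$, for $d\in\N$. Global sections of $\oo_{\bar C}(D_d)$ are precisely those rational functions on $\bar C$ that are regular on $\bar C\sminus S=C\sminus\{x_0\}\supset\{c_1,\ld,c_N\}$, with poles of bounded order along $S$.

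Next, set $Z:=2c_1+\ld+2c_N$, and consider the short exact sequence
\begin{equation*}
0\to\oo_{\bar C}(D_d-Z)\to\oo_{\bar C}(D_d)\to\bigoplus_{i=1}^N \oo_{\bar C,c_i}/\f m_{c_i}^2\to 0,
\end{equation*}
whose cokernel is a skyscraper sheaf with stalks the $2$-jet algebras at the $c_i$. Taking cohomology yields
\begin{equation*}
H^0(\bar C,\oo_{\bar C}(D_d))\too\bigoplus_{i=1}^N \oo_{\bar C,c_i}/\f m_{c_i}^2\too H^1(\bar C,\oo_{\bar C}(D_d-Z)).
\end{equation*}
For $d$ large enough, $\deg(D_d-Z)$ exceeds $2g-2$, so the right-hand term vanishes by Serre duality; hence values and $1$-jets at the $c_i$ of a section of $\oo_{\bar C}(D_d)$ can be prescribed independently. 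Fix distinct scalars $\la_1,\ld,\la_N\in\C$ and nonzero covectors $\mu_i\in T^*_{c_i}C\sminus\{0\}$, and lift them to a section $f\in H^0(\bar C,\oo_{\bar C}(D_d))$ with $f(c_i)=\la_i$ and $df(c_i)=\mu_i$ for every $i$.

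Finally, viewed as a morphism to $\BA^1$, the function $f$ is regular on $C\sminus\{x_0\}$ and is étale on the open subset $C':=\{x\in C\sminus\{x_0\}\mid df(x)\ne 0\}$, which contains every $c_i$ since $\mu_i\ne 0$. Setting $p:=f|_{C'}$ yields the desired étale morphism $p\colon C'\to\BA^1$, with values $p(c_i)=\la_i$ pairwise distinct. The argument is a textbook application of Serre vanishing, so no serious obstacle arises; the only points requiring a moment's care are the existence of the auxiliary point $x_0$ (used to guarantee a nontrivial space of sections regular near the $c_i$) and the observation that étaleness at $c_i$ is an open condition, which lets one shrink to $C'$ after the fact.
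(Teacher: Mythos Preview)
Your proof is correct. Both your argument and the paper's reduce to producing a regular function on an affine open neighbourhood of $\{c_1,\ldots,c_N\}$ that separates the $c_i$ and has nonvanishing differential at each; you then shrink to the étale locus.

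The routes differ. The paper removes one point $x\in C\sminus\{c_1,\ldots,c_N\}$ to make $C\sminus\{x\}$ affine, embeds it in $\BA^m$, and observes that a general linear projection $\BA^m\to\BA^1$ separates the finitely many $c_i$ and is unramified at each (the bad projections for each condition form a proper closed subset of the space of linear forms). Your approach is cohomological: you pass to a projective completion, use Serre vanishing to surject onto the $2$-jets at the $c_i$, and prescribe the values and differentials directly. The paper's argument is shorter and avoids any appeal to Serre duality or Riemann--Roch; yours is slightly heavier but has the minor advantage of being fully explicit about which jets you control, so nothing is left to a genericity claim.
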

\begin{proof}
Pick a point $x\in C\sminus\{c_1,\ldots,c_N\}$. Then $C\sminus\{x\}$ is
affine. Choose an embedding $\gamma:\ C\sminus\{x\}\hookrightarrow\BA^m$. Then 
the composition of $\gamma$ with a general linear projection
$\BA^m\to\BA^1$ is the desired $ p$; while ${C'}$ is the complement of
the set of ramification points of $ p$ in $C\sminus\{x\}$.
\end{proof}

Given an \'etale morphism of curves  $ p:{C}\to\BA^1$ define
$$^0{C}^{(n)}:=\{(c_1,\ldots,c_n)\in {C}^{(n)}\en\big|\en
 p(c_i)= p(c_j)\en\oper{iff}\en c_i=c_j,\; i,j=1,\ldots, n\}.
$$
This is clearly a Zariski open subset in ${C}^{(n)}$. Let
$\supp^{-1}(\ ^0{C}^{(n)})$ be its preimage in $\rep_{C}^n,$ cf. \eqref{supp}.
\begin{lemma}
\label{etal} The   morphism below   induced by the map  $ p:{C}\to\BA^1$
is
\'etale,
$$ \rep p:\ \supp^{-1}(\ ^0{C}^{(n)})\to \rep_{\BA^1}^n=\mathfrak{gl}_n.$$
\end{lemma}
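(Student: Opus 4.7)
The plan is to verify \'etaleness by checking that $\rep p$ induces an isomorphism on tangent spaces at each closed point; combined with smoothness of both source and target of dimension~$n^2$ (Lemma \ref{tangent}), this suffices.

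Fix $\CF\in\supp^{-1}(\ ^0{C}^{(n)})$, and write $\Supp\CF=\{c_1,\ldots,c_k\}$ with multiplicities $n_1,\ldots,n_k$ summing to $n$. By definition of $\ ^0{C}^{(n)}$, the images $p(c_1),\ldots,p(c_k)$ are pairwise distinct. Since $\CF$ is a finite-length torsion sheaf, it splits as $\CF\cong \bigoplus_i \CF_i$ with $\Supp\CF_i=\{c_i\}$; correspondingly $p_*\CF\cong \bigoplus_i p_*\CF_i$, with the summands now supported at the distinct points $p(c_i)$. Using the description from Lemma \ref{tangent},
\begin{equation*}
T_\CF(\rep^n_C)\cong\Ga\big(C^2,(\CF\boxtimes\CF^\vee)(\Delta_C)\big),\qquad T_{p_*\CF}(\mathfrak{gl}_n)\cong\Ga\big(\BA^1\!\times\!\BA^1,(p_*\CF\boxtimes (p_*\CF)^\vee)(\Delta_{\BA^1})\big),
\end{equation*}
both tangent spaces decompose as direct sums over the finitely many support points $(c_i,c_j)$, respectively $(p(c_i),p(c_j))$, of the corresponding skyscraper sheaves. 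The differential $d(\rep p)$ is compatible with these decompositions, sending the $(c_i,c_j)$-summand to the $(p(c_i),p(c_j))$-summand; crucially, since the $p(c_i)$ are distinct, different summands on the source land in different summands on the target, so it suffices to establish an isomorphism term-by-term.

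For the off-diagonal terms $i\neq j$, the divisor $\De$ is disjoint from $(c_i,c_j)$ (resp.\ $(p(c_i),p(c_j))$), so the twist is trivial locally and the assertion reduces to the elementary identification $\Ga(\text{nbhd of }(c_i,c_j),\CF_i\boxtimes\CF_j^\vee)\cong\Ga(C,\CF_i)\o\Ga(C,\CF_j^\vee)$, with the same tensor product obtained for $p_*\CF$ after pushforward. For the diagonal terms $i=j$, the point $c_i\in C$ is \'etale over $p(c_i)\in\BA^1$ with the same residue field $\C$, hence the induced map of completed local rings $\hat{\oo}_{\BA^1,p(c_i)}\to\hat{\oo}_{C,c_i}$ is an isomorphism. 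Via Lemma \ref{tangent} (or directly by the \'etale-local structure of $\rep^{n_i}$ at $(c_i,c_i)$, resp.\ $(p(c_i),p(c_i))$), this isomorphism identifies the local contributions to the tangent spaces. The main obstacle I anticipate is the careful bookkeeping for the diagonal contribution: one must check that the twist by $\De$ is respected by pushforward along $p$ at the formal neighborhood, which is a direct consequence of $p^*\Om_{\BA^1}\iso\Om_C$ locally (itself a restatement of \'etaleness). Combining the three, $d(\rep p)_\CF$ is a bijection, so $\rep p$ is \'etale on $\supp^{-1}(\ ^0{C}^{(n)})$.
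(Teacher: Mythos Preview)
Your proof is correct and follows essentially the same strategy as the paper's: reduce \'etaleness to an isomorphism on tangent spaces, decompose by the (finite) support of $\CF$, and use that $p$ being \'etale at each $c_i$ identifies the completed local rings $\hat\oo_{C,c_i}\iso\hat\oo_{\BA^1,p(c_i)}$. The only cosmetic difference is that the paper works with the description $T_\CF\rep_C^n\cong\Hom(\CK er_\CF,\CF)$, which decomposes over the single support points $c_i$ of the target $\CF$ (so no off-diagonal terms arise), whereas you use the description $\Ga\big(C^2,(\CF\boxtimes\CF^\vee)(\Delta)\big)$, which splits over pairs $(c_i,c_j)$ and hence requires your separate (easy) treatment of the off-diagonal summands; both routes are fine.
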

\begin{proof} To simplify notation, write
 $U:=\supp^{-1}(\ ^0{C}^{(n)})$ and $p_*:=\rep p.$

We must check that the differential of $ p_*$ on the
tangent spaces is invertible. Let
$\underline{c}=(k_1c_1+\ldots+k_lc_l)\in\ ^0{C}^{(n)}$ (so that
$c_1,\ldots,c_l$ are all distinct). Then $ p(c_1),\ldots, p(c_l)$
are all distinct. If $\CF\in U$, and $\supp\CF=\underline{c}$, then 
$ p_*\CF\in\rep_{\BA^1}^n$, and
$\supp(p_*\CF)=(k_1 p(c_1)+\ldots+k_l p(c_l))$. 
As we know, $T_\CF\rep_{C}^n=\Hom(\CK er_\CF,\CF)$.

Let ${C}_{c_i}$ (resp. $\BA^1_{ p(c_i)}$) stand for the completion 
of ${C}$ at $c_i$ (resp. $\BA^1$ at $ p(c_i)$). 
Let $(\CK er_\CF)_{c_i}$ (resp. $(\CF)_{c_i}$) denote the restriction
of $\CK er_\CF$ (resp. $\CF$) to ${C}_{c_i}$. Then the
restriction to these completions induces an isomorphism
$$\Hom(\CK er_\CF,\CF)=\bigoplus_{1\leq i\leq l}\Hom((\CK
er_\CF)_{c_i},(\CF)_{c_i}).$$
Similarly, we have
$$\Hom(\CK er_{ p_*\CF}, p_*\CF)=\bigoplus_{1\leq i\leq l}\Hom((\CK
er_{ p_*\CF})_{ p(c_i)},( p_*\CF)_{ p(c_i)}).$$

The map $ p$ being \'etale at $c_i$, it identifies $C_{c_i}$ with
$\BA^1_{ p(c_i)}$. Under this identification $(\CF)_{c_i}$ gets
identified with $( p_*\CF)_{ p(c_i)}$, and $(\CK er_\CF)_{c_i}$ gets
identified with $(\CK er_{ p_*\CF})_{ p(c_i)}$. Observe that
the latter identification
is provided by
the differential of the morphism $\rep p$ at the point $\CF\in\rep^n_C$. The lemma follows.
\end{proof}

\section{Cherednik algebras associated to algebraic curves}
\label{cher_sec}

\subsection{Global Cherednik algebras.}
\label{chered}
For any smooth algebraic curve $C$, P.~Etingof defined in~\cite{Eti},~2.19,
a sheaf of Cherednik algebras on the symmetric power $C^{(n)},\,n\geq
1$.

To recall Etingof's definition, for any smooth variety $Y$,  introduce a
length two
complex of sheaves
$\Omega_Y^{1,2}:=[\Omega_Y^1\stackrel{d}\to(\Omega_Y^2)_\text{closed}]$,
concentrated in degrees 1 and 2. The sheaves of algebraic
twisted differential operators (TDO) on $Y$ are known to be parametrized (up to
isomorphism) by elements of $H^2(Y, \Omega_Y^{1,2}),$
the second hyper-cohomology group, cf. eg. \cite{K}.

\begin{remark} 
For an affine curve $C$, we have
$H^2(C,\Omega^{1,2}_C)=0$; for a projective curve $C$,
 we have $H^2(C,\Omega^{1,2}_C)\cong\C,$ where a generator
 is provided  by the first Chern class of a degree 1 line
bundle on $C$.
\erem

Now, let $C$ be a smooth algebraic curve.
Given a class
 $\psi\in H^2(C,\Omega^{1,2}_C)$,
write $\dis\psi^{\boxtimes n}\in H^2(C^n,\Omega^{1,2}_{C^n})^{\syn}$
for the external
product of $n$ copies of $\psi$.
Pull-back via the projection $C^n\onto C^{(n)}$ induces an isomorphism
$\dis H^2(C^{(n)},\Omega_{C^{(n)}}^{1,2})\iso
 H^2(C^n,\Omega_{C^n}^{1,2})^{\syn}\sset  H^2(C^n,\Omega_{C^n}^{1,2})$.
We let $\psi_n\in H^2(C^{(n)},\Omega^{1,2}_{C^{(n)}})$
denote the preimage of the class $\psi^{\boxtimes n}$
via the above  isomorphism.

For any $n\geq 1,\,\kappa\in\BC,$ and  $\psi\in H^2(C,\Omega^{1,2}_C)$,
Etingof defines the sheaf  of global Cherednik algebras, as follows,
cf. \cite{Eti},~2.9.
Let  $\eta$ be  a 1-form  on $C^n$ 
such that
for the  cohomology class  $\eta\in H^2(C^n,\Omega^{1,2}_{C^n})$ 
one has $d\eta=\psi^{\boxtimes n}.$
Further, for any $i,j\in[1,n]$,
let $\Delta_{ij}\subset C^n$  be the corresponding $(ij)$-diagonal, with equal
$i$-th and $j$-th coordinates. Thus,  $\Delta_n=
\cup_{i\neq j} \Delta_{ij}$, is the big diagonal,
and
the image of  $\Delta_n$ under
the projection $C^n\onto  C^{(n)}$ is the  discriminant divisor,
  $\bD\subset C^{(n)}.$

Given  a vector field $v$ on $C^n$, for each  pair $(i,j)$, choose
 a rational
function $f^v_{ij}$ on $C^n$ whose polar part at $\Delta_{ij}$ corresponds to
$v$, as explained in ~{\em loc. cit.}~2.4. Associated to
such a data, Etingof defines  in~\cite{Eti},~2.9 the following
 Dunkl operator 
$$D_v:=\operatorname{Lie}_v+\langle v,\eta\rangle+\kappa\cd n\cd\sum_{i\ne j}(s_{ij}-1)\otimes f^v_{ij}
\,\in\; \D_{\psi_n}(C^n\sminus\Delta_n)\rtimes\syn.
$$
Here $\Lie_v$ stands for the Lie derivative
with respect to the vector field $v$ and  the
cross-product algebra $\D_{\psi_n}(C^n\sminus\Delta_n)\rtimes\syn$
on the right
is viewed as a sheaf of associative algebras on $C^{(n)}\sminus {\mathbf D}$.

The sheaf of Cherednik algebras is defined 
as a subsheaf of the sheaf
$\jmath_\idot\D_{\psi_n}(C^n\sminus\Delta_n)\rtimes\syn,$
where $\jmath: C^{(n)}\sminus {\mathbf D}\into C^{(n)}$ stands
for the
open imbedding.

 Specifically, 
following Etingof, we have

\begin{definition} Let 
 $\sH_{\kappa,\psi_n}$,  the sheaf of Cherednik algebras,  be the
sheaf, on $C^{(n)}$, of associative subalgebras,
 generated by all regular functions on $C^n$ and
by  the Dunkl operators
$D_v$, for all vector fields $v$ and $f_{ij}^v$ as above. 
\end{definition}

Next, let $\e\in\BC[\syn]$ be the idempotent projector to the trivial
representation. The subalgebra 
$\e\sH_{\kappa,\psi_n}\e\sset \sH_{\kappa,\psi_n}$
is called {\em spherical subalgebra}.
This is a sheaf of associative algebras on $C^{(n)}$
that may be identified naturally with
 a subsheaf of $\D_{\psi_n}(C^{(n)})$.

The following is a global analogue of a result due to
Bezrukavnikov and Etingof \cite{BE}, and
Gordon and Stafford \cite{GS}, Theorem  3.3,
 in the case where $C=\BA^1$.

\begin{proposition}
\label{morita} 
For any $\kappa\in\C\sminus [-1,
0),$  
 the  functor below is a Morita equivalence  
$$\Lmod{\sH_{\kappa,\psi_n}}\too
\Lmod{\e\sH_{\kappa,\psi_n}\e},\quad M\mto \e M.
$$
\end{proposition}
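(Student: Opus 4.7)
Since Morita equivalence between $\sH_{\kappa,\psi_n}$ and its spherical subalgebra $\e\sH_{\kappa,\psi_n}\e$ amounts to the sheaf equality $\sH_{\kappa,\psi_n}\e\sH_{\kappa,\psi_n}=\sH_{\kappa,\psi_n}$, a condition local on $C^{(n)}$, my plan is to reduce it to the already-known rational (local) statement of Bezrukavnikov--Etingof \cite{BE}. The reduction proceeds by working in the formal neighborhood of an arbitrary closed point $\underline{c}=k_1c_1+\cdots+k_\ell c_\ell\in C^{(n)}$, where the $c_i\in C$ are pairwise distinct and $k_1+\cdots+k_\ell=n$; such neighborhoods jointly detect sheaf equalities.

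The first step is a factorization observation: inspection of Etingof's Dunkl operator $D_v$ shows that for $i,j$ indexing coordinates in different blocks (so $c_i\ne c_j$) the cross-term $f^v_{ij}$ is regular in a neighborhood of $\underline c$ and contributes trivially after restriction. Consequently, the completion of $\sH_{\kappa,\psi_n}$ at $\underline c$ should be naturally a completed tensor product
\[
\wh{\sH}_{\kappa,\psi_{k_1}}\,\widehat{\otimes}\,\cdots\,\widehat{\otimes}\,\wh{\sH}_{\kappa,\psi_{k_\ell}},
\]
with each factor supported at $k_ic_i\in C^{(k_i)}$; correspondingly, the idempotent $\e\in\C[\syn]$ decomposes, modulo an invertible symmetrisation over coset representatives, as $\e_{k_1}\boxtimes\cdots\boxtimes\e_{k_\ell}$. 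Hence the desired equality is multiplicative across the factors, and I am reduced to the single-point case $\underline c=kc$ for every $k\le n$.

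The second step invokes Proposition \ref{etale_prop}: after shrinking $C$ if necessary, there is an \'etale map $p:C\to\BA^1$. Because Etingof's construction is manifestly natural under \'etale base change of the curve, and because $\psi$ becomes trivial after restriction to a formal affine neighborhood (on which any two TDO's are isomorphic, so the twist can be absorbed into a gauge transformation of generators), the completion of $\sH_{\kappa,\psi_k}$ at $kc$ is identified with the completion at the origin of the rational Cherednik algebra of type $\mathbf{A}_{k-1}$ attached to $\BA^1$. For the latter, and for $\kappa\in\C\sminus[-1,0)$, Morita equivalence is exactly \cite{BE}; the trigonometric analogue \cite{GS}, Theorem~3.3, handles the case $C=\C^\times$ in parallel. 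Faithful flatness of formal completions then yields $\sH_{\kappa,\psi_n}\e\sH_{\kappa,\psi_n}=\sH_{\kappa,\psi_n}$ globally.

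The hardest part, in my view, is making these compatibilities rigorous at the level of sheaves of twisted differential operators: one must check that the local identifications of Cherednik algebras, the splitting of $\e$, the primitive 1-form $\eta$ with $d\eta=\psi^{\boxtimes n}$, and the Dunkl operators themselves all descend coherently under both factorization and \'etale pullback. Once this bookkeeping is in place, Morita equivalence is automatically preserved under \'etale localization and completed external tensor product, so the reduction to \cite{BE} completes the argument.
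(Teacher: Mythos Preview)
Your proposal is correct and follows essentially the same strategy as the paper: reduce the Morita equivalence to the sheaf equality $\sH_{\kappa,\psi_n}\e\sH_{\kappa,\psi_n}=\sH_{\kappa,\psi_n}$, observe this is local on $C^{(n)}$, and reduce to the known case $C=\BA^1$ via Proposition~\ref{etale_prop}, citing \cite{BE} and \cite{GS}.

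The paper's execution is slightly leaner. It works directly with \'etale open subsets rather than formal completions: Proposition~\ref{etale_prop} already furnishes, around any point of $C^{(n)}$, an \'etale morphism $f:U\to(\BA^1)^{(n)}$ under which the quotient sheaf $\sH_{\kappa,\psi_n}/\sH_{\kappa,\psi_n}\e\sH_{\kappa,\psi_n}$ is the pullback of the corresponding sheaf for $\BA^1$, and the latter vanishes by \cite{GS}. In particular your factorization step---splitting the formal neighborhood of $k_1c_1+\cdots+k_\ell c_\ell$ into a completed tensor product and reducing to the single-point case---is not needed, since the \'etale map to $\BA^1$ in Proposition~\ref{etale_prop} already handles points with arbitrary support in one stroke. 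Your approach is not wrong, just slightly more elaborate; the paper also sidesteps most of the TDO bookkeeping you flag as ``the hardest part'' by working at the level of the quotient sheaf rather than the generators.
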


\begin{proof} It is a well known fact that the Morita equivalence
statement is equivalent to an equality
$\sH_{\kappa,\psi_n}=\sH_{\kappa,\psi_n}\cd\e\cd\sH_{\kappa,\psi_n}.$
In any case, on $C^{(n)}$, one has an exact sequence of sheaves
\begin{equation}\label{moritaseq}
0\to\sH_{\kappa,\psi_n}\cd\e\cd\sH_{\kappa,\psi_n}\to
\sH_{\kappa,\psi_n}\to
\sH_{\kappa,\psi_n}/\sH_{\kappa,\psi_n}\cd\e\cd\sH_{\kappa,\psi_n}\to 0.
\end{equation}

Proving that the sheaf on the right vanishes is a `local' problem.
Thus, one can restrict \eqref{morita} to an open subset
in  $C^{(n)}$. Then, we are in a position to 
use the above cited result of  Gordon and Stafford
saying that  Proposition \ref{morita} holds for the curve $C=\BA^1$.
In effect, by Proposition \ref{etale_prop},
each point in  $C^{(n)}$
is contained in an open subset $U$ with the following property.
There exists an \'etale morphism 
$f:\ U\to(\BA^1)^{(n)}$
such that the pull-back via $f$ of the sheaf on the right 
of \eqref{morita} for the curve $\BA^1$ is equal to the corresponding sheaf for 
the curve $C$.
\end{proof}

The sheaf $\sH_{\kappa,\psi_n}$  comes equipped with an
increasing filtration  arising from
 the standard filtration by the order of differential
operator, cf. \cite{EG}, \cite{Eti}. 
The filtration on  $\sH_{\kappa,\psi_n}$ induces, by restriction,
an increasing filtration, $F_\idot(\e\sH_{\kappa,\psi_n}\e),$
on the  spherical subalgebra.
Etingof proved a graded
algebra isomorphism, cf. \cite{Eti},
\begin{equation}\label{gralg}
\gr^F(\e\sH_{\kappa,\psi_n}\e)\cong
p_*\CO_Y,\quad Y:=(T^*C)^{(n)}=(T^*(C^n))/\syn,
\end{equation}
where $p: (T^*C)^{(n)}\to C^{(n)}$ denotes the natural projection.

\subsection{The determinant line bundle.} 
\label{detline} We fix a curve $C$, an integer $n\geq 1$,
and let $V=\C^n$.
Set  $X_n:=\rrep\times V,$ a
smooth variety.
Let $X_n^{\oper{cyc}}\subset X_n=\rep^n_C\times  V$ be a subset formed by the
triples $(\CF,\vartheta,v)$ such that $v$ is a cyclic vector,
i.e., such that the morphism $\CO_C \to \CF,\,f\mto f v$ is surjective. 
It is clear that the set
$X_n^{\oper{cyc}}$ is  a $GL(V)$-stable Zariski open subset of $X_n$.

Choose a basis of $V$,
and identify $V=\C^n$, and $GL(V)=GL_n$, etc. Assume further
that our  curve $C$ admits a  {\em global} coordinate $t: C\into\BA^1.$
Then, associated with each pair $(\CF,\vartheta,v)\in X_n^{\oper{cyc}}$,
there is a {\em  matrix}
${\mathsf{g}}(\CF,\vartheta,v)\in GL_n$, whose
$k$-th row is given by the $n$-tuple of coordinates of the
vector $t^{k-1}(v)\in\C^n=V=\Gamma(C, \CF),\,$ $k=1,\ldots,n$.

\begin{lemma}\label{torsor} \vi The  map $\supp: X_n^{\oper{cyc}}\to
C^{(n)}$, cf. \eqref{supp},
makes the  scheme $X_n^{\oper{cyc}}$ a
$GL(V)$-torsor over $C^{(n)}$.

\vii Given  a basis in $V$ and a {\em global} coordinate
$t: C\into\BA^1,$ the map
\begin{equation}\label{triv}{\mathsf{g}}\times \supp:\
X_n^{\oper{cyc}}\iso GL_n \times C^{(n)},\quad
(\CF,\vartheta,v)\mto \big({\mathsf{g}}(\CF,\vartheta,v),\;\Supp\CF\big),
\end{equation}
provides
a $GL(V)$-equivariant trivialization of the $GL(V)$-torsor from 
$\mathsf{(i)}.$
\end{lemma}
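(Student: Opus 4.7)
I will prove (ii) first by constructing an explicit inverse to $\mathsf{g}\times\supp$, then deduce (i) by combining the free $GL(V)$-action with the trivialization of (ii) applied \'etale-locally.

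\textbf{For (ii).} Cyclicity of $\vartheta(v) \in \Gamma(C,\CF)$ together with the global coordinate $t$ implies that $v, t\!\cdot\! v, \ldots, t^{n-1}\!\cdot\! v$ is a basis of $V$: the annihilator of $\vartheta(v)$ in $\C[t]$ is the ideal generated by $\prod_c(t-c)^{m_c}$, where $\Supp\CF = \sum_c m_c\!\cdot\! c$, so the first $n$ powers of $t$ applied to $v$ are linearly independent and span by cyclicity. Hence $\mathsf{g}(\CF,\vartheta,v) \in GL_n$. $GL(V)$-equivariance follows by a direct computation: under $g\!\cdot\!(\CF,\vartheta,v) = (\CF, \vartheta\circ g^{-1}, gv)$, the $t$-action on $V$ is conjugated by $g$, so each row $t^{k-1}(v)$ transforms as $t^{k-1}(v) \mapsto g\!\cdot\! t^{k-1}(v)$, matching the standard free transitive $GL_n$-action on the first factor of $GL_n \times C^{(n)}$. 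For the inverse, given $(h,D) \in GL_n \times C^{(n)}$ with $D = \sum_c m_c\!\cdot\! c$, I would set $\CF := \CF_D := \bigoplus_c \CO_{C,c}/\fm_c^{m_c}$---the unique (up to iso) cyclic $\CO_C$-sheaf of support $D$---pick any cyclic generator $w \in \Gamma(\CF_D)$, and define $\vartheta: V\iso\Gamma(\CF_D)$ by sending the $k$-th row $u_k$ of $h$ to $t^{k-1}w$, with $v := u_1$. Changing $w$ to $h'\!\cdot\! w$ with $h' \in \operatorname{Aut}(\CF_D) = (\CO_C/\CO_C(-D))^*$ replaces $\vartheta$ by $h'\vartheta$, which represents the same point of $\rep^n_C$, so the inverse is well-defined.

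\textbf{For (i).} Freeness is immediate: a stabilizer $g$ of $(\CF,\vartheta,v)$ in $GL(V)$ produces an automorphism $\phi \in \operatorname{Aut}(\CF)$ fixing the cyclic generator $\vartheta(v)$, forcing $\phi = \mathrm{id}$ and hence $g = \mathrm{id}$. Transitivity on fibers of $\supp$ follows because two cyclic generators of the same $\CF_D$ differ by a unit in $\operatorname{Aut}(\CF_D)$, which conjugates into the required $g\in GL(V)$ moving one point to the other. Local triviality comes from (ii) applied \'etale-locally: by Proposition \ref{etale_prop}, every point of $\rep^n_C$ lies in an open set admitting a $GL_n$-equivariant \'etale morphism to $\mathfrak{gl}_n = \rep^n_{\BA^1}$ compatible with the support maps; since (ii) for $C = \BA^1$ gives a global trivialization, the torsor structure descends to general $C$ by \'etale descent.

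\textbf{Main obstacle.} The delicate point is the independence of the inverse in (ii) from the auxiliary choice of cyclic generator $w$: points of $\rep^n_C$ are surjections $V\otimes\CO_C\twoheadrightarrow\CF$ modulo $\operatorname{Aut}(\CF)$, and verifying that different choices of $w$ yield equivalent data---hence a well-defined algebraic inverse rather than a mere set-theoretic bijection---is the crux of (ii). Reassuringly, this same $\operatorname{Aut}(\CF)$-ambiguity is killed by the incorporation of the cyclic vector $v$, which is precisely why the $GL(V)$-action becomes free on the locus $X_n^{\oper{cyc}}$.
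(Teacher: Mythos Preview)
Your argument is correct, but your route differs from the paper's. The paper dispatches (i) in one line by identifying the quotient $GL(V)\backslash X_n^{\oper{cyc}}$ with the moduli of length-$n$ quotients of $\CO_C$, i.e.\ with the Quot scheme $\oper{Quot}^n_{\CO_C}\cong C^{(n)}$; since the action is free this already exhibits $X_n^{\oper{cyc}}\to C^{(n)}$ as a torsor, and (ii) is declared immediate. You instead argue bottom-up: first an explicit inverse for (ii), then freeness and fiberwise transitivity by hand, and finally local triviality by pulling back the $\BA^1$ trivialization through the \'etale charts of Proposition~\ref{etale_prop}. Your approach is more elementary---it never invokes the Quot scheme---but is correspondingly longer, and two steps deserve a bit more care than you give them: the inverse in (ii) must be exhibited as a morphism of schemes (not just a pointwise recipe), which is easy once you take the canonical cyclic generator $w=1$ in $\C[t]/(p_D(t))$ and let $p_D$ vary algebraically over $C^{(n)}$; and your ``\'etale descent'' step implicitly uses that the square comparing $\supp$ over $C$ and over $\BA^1$ is Cartesian over ${}^0C^{(n)}$, which is precisely the content of Lemma~\ref{etal}. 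The paper's moduli-theoretic shortcut avoids both of these verifications.
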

\begin{proof} 
The action of $GL(V)$ on $\Quot^n_C\times V$ is given by 
$g(\CF,\vartheta,v)=(\CF,\vartheta\circ g^{-1},gv)$. 

To prove (i), we observe that the quotient  
$GL(V)\backslash X_n^{\oper{cyc}}$ is the moduli stack of quotient sheaves
of length $n$ of the structure sheaf $\CO_C$. However, this stack is
just the Grothendieck Quot scheme $\oper{Quot}^n_{\CO_C}$
isomorphic to $C^{(n)}$. Part (i) follows. Part (ii) is immediate.
\end{proof}

On $X_n=\Quot^n_C\times  V,$ we have a trivial line bundle $\det$
with fiber $\wedge^n V$. This  line bundle comes  equipped with the
natural $GL(V)$-equivariant structure given,
for any $a\in\wedge^n V$,  by
$g(\CF,\vartheta,v;a)=(\CF,\vartheta\circ
g^{-1},$ $gv;(\det g)\cdot a)$. We define a
 {\em determinant 
  bundle} to be the unique line bundle $\BCL$  on $C^{(n)}$
such that  $\supp^*\BCL$,
the pull-back of $\BCL$
via the  projection $X_n^{\oper{cyc}}\onto C^{(n)},$ is isomorphic
to $\det|_{X_n^{\oper{cyc}}}.$ 

The square  $\BCL^2$, of determinant bundle,
 has a canonical
{\em rational} section $\bdel$, defined
as follows. Let $(c_1,\ldots,c_n)$ be
pairwise distinct points of $C$,
and  $\CF=\CO_C/\CO_C(-c_1-\ldots-c_n)$
$=\CO_{c_1}\oplus\ldots\oplus\CO_{c_n},$ a point in $X_n\cyc$. The fiber
of  $\BCL^2$ at the point $\supp(\CF)\in C^{(n)}$  is
identified with the vector space
$\BCL^2_\CF=(\CO_{c_1}\otimes\ldots\otimes\CO_{c_n})^{\otimes2}.$
We define $\bdel(c_1,\ldots,c_n):=
(1_{c_1}\wedge\ldots\wedge1_{c_n})^2$, where
$1_{c_r}\in \CO_{c_r}$ stands for the unit element.

We have the finite  projection map $p: C^n\onto  C^{(n)}$,
the (big)  diagonal divisor,  $\Delta_n\subset C^n$,
and the  discriminant divisor,
 $\bD\subset C^{(n)}$.
Observe that,  set theoretically, we have $|\bD|=p(|\Delta_n|);$
however, for divisors, one has an equation
 $p^*\bD=2\Delta_n$. 

Next, we put $X_n^\reg:=X_n\cyc\cap\supp\inv(C^{(n)}\sminus\bD)$.
Clearly, we have $X_n^\reg\sset X_n\cyc\sset X_n.$
Given a global
coordinate $t:\ C\into \BA^1,$ we write
$$
\pi(c):=\prod_{1\leq i < j\leq n}
\big(t(c_i)-t(c_j)\big), \qquad \forall c=(c_1,\ldots, c_n)\in C^n.
$$ 
for the Vandermonde determinant. Thus, $\pi^2(c)$ is a regular function
on $C^{(n)}$.

\begin{proposition}\label{bdel} \vi For any smooth connected curve $C$,
we have 
$$X_n\sminus X_n^\reg= \supp\inv(\bD)\cup (X_n\sminus X_n\cyc),
$$
 is a union of two irreducible divisors.

\vii There exists a regular function 
$\ff\in \CO(X_n^\reg)$ such that  
\vskip 2pt

\pb{it has a zero of order 2 at the divisor $X_n\sminus X_n\cyc,$
and a pole of order~1 at the divisor $\supp\inv(\bD)$;}

\pb{it is a $GL_n$-semi-invariant, specifically, we have}
$$\ff(g\cdot x)=(\det g)^2\cdot \ff(x),\qquad\forall g\in GL_n, \,x\in
X_n^\reg.
$$

\viii The function $\ff$ is defined
uniquely up to multiplication by the pull back of an invertible function
on $C^{(n)}$.
Furthermore,
in a trivialization as in Lemma \ref{torsor}(ii),
one can put\newline
\centerline{
$\dis\ff(x):=\big(\det {\mathsf{g}}(x)\cdot \pi(\supp(x)\big)^2,\quad x\in X_n^\reg.$}
\end{proposition}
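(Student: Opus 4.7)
For part (i), the set-theoretic identity
$$X_n\setminus X_n^\reg\;=\;(X_n\setminus X_n\cyc)\,\cup\,\supp^{-1}(\bD)$$
is immediate from the definition $X_n^\reg=X_n\cyc\cap\supp^{-1}(C^{(n)}\sminus\bD)$. I then verify each piece is an irreducible divisor separately. For $\supp^{-1}(\bD)$: the discriminant $\bD$ is an irreducible divisor in $C^{(n)}$, and by Lemma \ref{torsor}(i) the map $\supp: X_n\cyc\to C^{(n)}$ is a $GL_n$-torsor, hence smooth with irreducible fibres, so its pullback is an irreducible divisor in $X_n\cyc$, whose closure in $X_n$ remains such. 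For $X_n\setminus X_n\cyc$: by Proposition \ref{etale_prop} one may work étale-locally and identify $X_n$ with an open subset of $\gl_n\times V$; the non-cyclic locus there is the classical irreducible hypersurface cut out (with multiplicity one) by the single polynomial $\det[v\mid Av\mid\ldots\mid A^{n-1}v]$, and irreducibility plus codimension one descend.

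For parts (ii) and (iii), I first treat the case in which $C$ admits a global coordinate $t:C\hookrightarrow\BA^1$; after also fixing a basis of $V$, Lemma \ref{torsor}(ii) trivializes $X_n\cyc\cong GL_n\times C^{(n)}$, and I declare $\ff$ to be given by the explicit formula of (iii). The $GL_n$-semi-invariance is a direct computation: under the action $g\cd(\CF,\vartheta,v)=(\CF,\vartheta\circ g^{-1},gv)$, the $k$-th row $\vartheta^{-1}(t^{k-1}\vartheta(v))$ of $\mathsf{g}$ becomes $g\cd\vartheta^{-1}(t^{k-1}\vartheta(v))$, i.e., $\mathsf{g}$ is left-multiplied by $g$, so $\det\mathsf{g}$ is multiplied by $\det g$; the factor $\pi(\supp x)$ depends only on the $GL_n$-invariant $\supp x$, so the square picks up $(\det g)^2$. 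The divisorial orders follow because (a) $\det\mathsf{g}$ is a regular function on $X_n$ whose reduced vanishing locus is exactly $X_n\setminus X_n\cyc$ by the analysis of (i); and (b) $\pi(\supp\cdot)^2$ is the pullback via $\supp$ of a well-defined function on $C^{(n)}$ whose divisorial behaviour along $\bD$ is determined by the ramification identity $p^*\bD=2\Delta_n$ together with the fact that $\pi$ vanishes simply along $\Delta_n$ in $C^n$.

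To pass to a general smooth curve $C$, I apply Lemma \ref{etale} to cover $C$ by affine opens $C_s$ each equipped with a global coordinate $p_s:C_s\to\BA^1$. On each, the above construction gives a function $\ff_s$ on the corresponding piece of $X_n^\reg$. By the uniqueness statement (proved next), any two of these $\ff_s$'s differ on overlaps by the pullback of an invertible function from $C^{(n)}$; choosing a trivialization of the resulting line-bundle-like cocycle assembles them into a global $\ff\in\CO(X_n^\reg)$.

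For uniqueness in (iii): if $\ff_1,\ff_2$ both satisfy (ii), the ratio $\ff_1/\ff_2$ is regular, nowhere vanishing, and $GL_n$-invariant on $X_n^\reg$. Since $X_n^\reg$ is a $GL_n$-torsor over $C^{(n)}\sminus\bD$ by Lemma \ref{torsor}(i), the ratio descends to an invertible regular function on $C^{(n)}\sminus\bD$. The matching prescribed pole/zero orders at $\supp^{-1}(\bD)$ cancel in the ratio, so the descended function extends across $\bD$ to an invertible regular function on all of $C^{(n)}$, which is the claimed ambiguity.

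The main obstacle I expect is the gluing step for general $C$: one must verify that the local formulae built from two different trivializations (either on an overlap of the cover, or after applying an étale map $p:C_s\to\BA^1$) differ by a pullback from $C^{(n)}$, which amounts to comparing how $\mathsf g$ and $\pi$ transform under change of coordinate. Everything else is semi-invariance bookkeeping and the classical divisorial computations sketched above.
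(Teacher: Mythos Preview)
Your approach diverges from the paper's at the key step, and the obstacle you flag is real. You construct $\ff$ locally from the explicit formula and then attempt to glue; the paper instead constructs $\ff$ \emph{globally from the outset}, which makes the gluing issue disappear.

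Here is what the paper does. Recall that just before the proposition a canonical rational section $\bdel$ of $\BCL^{\otimes 2}$ was defined intrinsically: for pairwise distinct $c_1,\dots,c_n$ one sets $\bdel(c_1,\dots,c_n)=(1_{c_1}\wedge\cdots\wedge 1_{c_n})^2$. No choice of coordinate on $C$ is involved. Now by the very definition of $\BCL$, a rational section of $\BCL^{\otimes 2}$ over $C^{(n)}$ is the same thing as a $(\det)^2$-semi-invariant rational section of the trivial line bundle $(\wedge^n V)^{\otimes 2}$ on $X_n\cyc$. After choosing a basis of $V$ (hence of $\wedge^n V$), this becomes a rational function. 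The paper simply \emph{declares} $\ff$ to be the function corresponding to $\bdel$. Semi-invariance is then automatic, and the zero/pole orders are checked locally: the order of zero along $X_n\sminus X_n\cyc$ reduces, in a local coordinate, to the order of vanishing of $(\det\mathsf g)^2$; the order of pole along $\supp^{-1}(\bD)$ is read off from Lemma~\ref{koren'}, which says $\BCL^{\otimes 2}\simeq\CO_{C^{(n)}}(\bD)$, i.e.\ $\bdel$ has a simple pole along~$\bD$. The explicit formula in (iii) then follows from uniqueness.

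Your local-then-glue strategy, by contrast, produces a \v Cech $1$-cocycle with values in $\CO^\times_{C^{(n)}}$, hence a line bundle on $C^{(n)}$; you would need to prove this bundle is trivial before ``choosing a trivialization''. That amounts to reproving Lemma~\ref{koren'} through the back door. So the gap you identified is genuine, and the clean fix is exactly the paper's: use the intrinsic section $\bdel$ to build $\ff$ globally, and only pass to local coordinates when computing orders.
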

\begin{proof} 
The uniqueness statement follows from the semi-invariant property of
$\ff$. To prove the existence, recall the definition of the
line $\BCL$. By that definition, a section of $\BCL$ is the
same thing as a  semi-invariant section of the trivial bundle with fiber $\wedge^n V$,
on $X_n\cyc$. We let $\ff$ be the rational section of the latter bundle
corresponding, this way, to the rational section $\bdel$ of $\BCL$.
A choice of base in $V$ gives a basis in $\wedge^n V$,
hence allows to view  $\ff$ as a  semi-invariant rational function.

The order of zero of $\ff$ at $X_n\sminus X_n\cyc$ is a local
question, so we may assume the existence of a local coordinate $t$ on
$C$, trivializing our torsor as in~\ref{torsor}(ii). Then the divisor
$X_n\sminus X_n\cyc$ is given by an equation $\det {\mathsf g}(x)=0$.
So we see that the zero of $\ff$ at $X_n\sminus X_n\cyc$ indeed has
order~2. The statement that $\ff$ has a pole of order~1 at
$\supp\inv(\bD)$ immediately follows from the next lemma.
In the presence of a local coordinate $t$,
the explicit formula for the function $\mathsf{g}$ then follows
from the uniqueness statement.
\end{proof}

\begin{lemma}
\label{koren'}
There is a canonical isomorphism $\BCL^{\otimes2}\simeq\CO_{C^{(n)}}(\bD)$.
\end{lemma}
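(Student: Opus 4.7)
The approach is to exhibit a regular global section of $\BCL^{\otimes 2}$ whose zero divisor equals $\bD$, which will yield the canonical isomorphism $\BCL^{\otimes 2} \cong \CO_{C^{(n)}}(\bD)$. The natural candidate is the $GL(V)$-semi-invariant $\ff$ constructed in Proposition~\ref{bdel}.

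First, I verify that $\ff$ descends to a regular global section of $\BCL^{\otimes 2}$ on $C^{(n)}$. By construction $\BCL^{\otimes 2}$ is the descent, via the $GL(V)$-torsor $\supp\colon X_n\cyc \onto C^{(n)}$ (Lemma~\ref{torsor}(i)), of the trivial line bundle with fiber $(\wedge^n V)^{\otimes 2}$ carrying the character $\det^2$. Regular sections of $\BCL^{\otimes 2}$ on $C^{(n)}$ correspond bijectively to regular functions on $X_n\cyc$ transforming under $GL(V)$ by $\det^2$, and the transformation law $\ff(g\cdot x)=(\det g)^2\,\ff(x)$ from Proposition~\ref{bdel}(ii) is precisely this. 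The regularity of $\ff$ on all of $X_n$ is transparent from the explicit formula $\ff(x)=(\det\mathsf{g}(x)\cdot \pi(\supp x))^2$ of Proposition~\ref{bdel}(iii), which requires a global coordinate $t$ on $C$; for a general smooth curve, this is applied on the étale-local charts provided by Proposition~\ref{etale_prop}.

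Next, I identify the zero divisor of the resulting section. On $X_n\cyc$ the factor $\det\mathsf{g}$ is nowhere vanishing: cyclicity of $v$ forces the rows $t^{k-1}(v)$ of $\mathsf{g}$ to be a basis of $V$, so $\mathsf{g}(x) \in GL_n$. Hence the zero locus of $\ff$ in $X_n\cyc$ is exactly $\supp^{-1}(\bD)$, and the order of vanishing is one, because $\pi^2$ has a simple zero along $\bD \subset C^{(n)}$. This last point uses that on $C^n$ the pullback $p^*\pi^2$ vanishes to order two along $\Delta_n$, combined with the ramification identity $p^*\bD = 2\Delta_n$ for the projection $p\colon C^n \onto C^{(n)}$. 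The descended section of $\BCL^{\otimes 2}$ is therefore regular on $C^{(n)}$ with zero divisor precisely $\bD$ of multiplicity one, and the canonical isomorphism $\BCL^{\otimes 2} \cong \CO_{C^{(n)}}(\bD)$ follows from the standard identification of a line bundle equipped with such a section with the invertible sheaf associated to its divisor of zeros.

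The main step requiring attention is the étale-local reduction to the case $C = \BA^1$ provided by Proposition~\ref{etale_prop}, which is routine since all the relevant data --- the torsor structure of $X_n\cyc$, the line bundle $\BCL$, the discriminant divisor $\bD$, and the function $\ff$ --- are compatible with étale base change.
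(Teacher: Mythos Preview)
There is a circularity issue in your argument: you cite Proposition~\ref{bdel} for the semi-invariance and the explicit formula of $\ff$, but the paper's own proof of Proposition~\ref{bdel} invokes Lemma~\ref{koren'} to determine the behaviour of $\ff$ along $\supp^{-1}(\bD)$. Moreover, part~(ii) of that proposition asserts that $\ff$ has a \emph{pole} of order~1 along $\supp^{-1}(\bD)$, not a zero; so the paper's $\ff$ (which by definition corresponds to $\bdel$) cannot descend to a regular section of $\BCL^{\otimes2}$ as you claim. What saves your argument is that you are really only using the function $(\det\mathsf{g})^2\cdot\pi^2$ itself, irrespective of whether it coincides with the paper's $\ff$: this function is manifestly a regular $\det^2$-semi-invariant on $X_n\cyc$ with a simple zero along $\supp^{-1}(\bD)$, and both properties are immediate from the formula without any appeal to Proposition~\ref{bdel}. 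If you rewrite the proof to define this function directly and verify these two facts by hand, the circularity disappears and the argument stands.

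Your route is genuinely different from the paper's. The paper works on the $\syn$-cover $p\colon C^n\to C^{(n)}$ and analyzes the canonical rational section $\bdel$ of $p^*\BCL^{\otimes2}$, showing it is regular nonvanishing off $\Delta_n$ with a second-order pole along $\Delta_n$; the pole computation is reduced to the case $C=\BA^1$, $n=2$. You work instead on the $GL(V)$-torsor $X_n\cyc\to C^{(n)}$ and descend an explicit semi-invariant. The paper's section $\bdel$ is coordinate-free, so the resulting isomorphism is visibly canonical; your section depends on the local coordinate $t$, and the sections on different \'etale charts differ by invertible functions pulled back from $C^{(n)}$, so they do not literally glue to a global section --- only the isomorphism class $\BCL^{\otimes2}\cong\CO_{C^{(n)}}(\bD)$ does, which is enough.
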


\proof 
We have to construct an $\syn$-equivariant section of $p^*\BCL^{\otimes2}$
which is regular nonvanishing on $C^n-\Delta_n$, and has a second order
pole at (each component of) $\Delta_n$. 
%Let $(c_1,\ldots,c_n)$ be
%pairwise distinct points of $C$. 
%For $\CF=\CO_C/\CO_C(-c_1-\ldots-c_n)
%=\CO_{c_1}\oplus\ldots\oplus\CO_{c_n}$ we have 
%$\BCL_\CF=\CO_{c_1}\otimes\ldots\otimes\CO_{c_n}\ni
%1_{c_1}\otimes\ldots\otimes1_{c_n}=:s(c_1,\ldots,c_n)$.
Clearly, $\bdel$ is a $\syn$-invariant regular nonvanishing section
of $p^*\BCL^{\otimes2}|_{C^n\sminus\Delta_n}$. To compute the order of the pole
of $\bdel$ at the diagonal $c_i=c_j$ it suffices to consider
the case where $C=\BA^1,\ n=2$. 
In the latter case, a straightforward calculation shows
that  $\bdel$ has a second order pole.
\endproof

\subsection{Quantum Hamiltonian reduction.}
\label{cvetochki} The goal of this section is to provide a construction
of the sheaf of spherical Cherednik subalgebras
in terms of quantum Hamiltonian reduction, in the spirit of \cite{GG}.

Let $c_1(\BCL)\in H^2(C^n,\Omega_{C^{(n)}}^{1,2})^{\syn}$
denote the image of the first 
Chern class of the determinant line bundle $\BCL$.
 For a complex number $\kappa$,
we will sometimes denote the class $\kappa\cdot c_1(\BCL)\in
H^2(C^n,\Omega_{C^n}^{1,2})^{\syn}$ simply by $\kappa$, if there is
no risk of confusion. 
Let $\psi\in H^2(C,\Omega_C^{1,2})^{\syn}$. Note that if $C$ is
affine, then $\psi=0$, and if $C$ is projective, then $\psi$ is
proportional to the first Chern class $c_1(\ssl)$ of a degree one line bundle
$\ssl$ on $C$, so that $\psi=k\cdot c_1(\ssl)$. 
The line bundle $\ssl^{\boxtimes n}$ on $C^n$ is
$\syn$-equivariant.
Let  $\ssl^{(n)}$ denote the subsheaf of $\syn$-invariants in
$p_*\ssl^{\boxtimes n}$, the direct image sheaf on $C^{(n)}$.

We set $\psi_n=k\cdot c_1(\ssl^{(n)}\in
H^2(C^{(n)},\Omega_{C^{(n)}}^{1,2})$, and
let $\supp^*(\psi_n)\in H^2(X_n,\Omega_{X_n}^{1,2})$, be its pull-back
 via the support-morphism \eqref{supp}, cf. \cite{Eti}, 2.9.
Let   $\D_{\psi_n}(X_n)$ denote the sheaf 
 on $X_n=\rep^n_C\times  V$,   of twisted differential operators
associated with  the class
 $\supp^*(\psi_n)$. 

We have a  $GL_n$-action along the
fibers of  the support-morphism. Therefore,
the TDO   associated with a pull-back class comes equipped
with a natural $GL_n$-equivariant structure.
More precisely, \cite{BB2}, Lemma 1.8.7, implies that
the pair $(\D_{\psi_n}(X_n), GL_n)$ has the canonical structure of
a \hhh algebra on $X_n$, in the sense of \cite{BB2}, \S 1.8.3.
It follows, in particular, that there is a natural
Lie algebra morphism $\gl_n\to\D_{\psi_n}(X_n), \,u\mto\arr{u}$,
compatible with the $GL_n$-action on $X_n$. 

For any $\kappa\in\C,$ 
the assignment $u\mto \arr{u}-\kappa\cdot tr(u)\cdot1$ gives another
Lie  algebra morphism
 $\gl_n\to\D_{\psi_n}(X_n)$.
Let ${\mathfrak g}_\kappa\sset \D_{\psi_n}(X_n)$ denote the
image of the latter morphism.
Thus,  ${\mathfrak g}_\kappa$ is
a Lie subalgebra of first order twisted differential operators,
and we write $\D_{\psi_n}(X_n){\mathfrak g}_\kappa$  for
the left ideal in $\D_{\psi_n}(X_n)$ generated by
the vector space ${\mathfrak g}_\kappa$.

Let $\D_{\psi_n}(X_n^{\oper{cyc}})$ be the restriction of
the TDO $\D_{\psi_n}(X_n)$ to the open subset $X_n\cyc\sset X_n$.
Let $\supp_\idot\D_{\psi_n}(X_n^{\oper{cyc}})$ denote the sheaf-theoretic
direct image, a sheaf of filtered associative algebras
on $C^{(n)}$ equipped with a $GL_n$-action.
 We  have the left ideal
 $\D_{\psi_n}(X_n^{\oper{cyc}}){\mathfrak g}_\kappa$,
in $\D_{\psi_n}(X_n^{\oper{cyc}})$,
and the corresponding $GL_n$-stable left ideal 
$\supp_\idot\D_{\psi_n}(X_n^{\oper{cyc}}){\mathfrak
g}_\kappa\sset\supp_\idot\D_{\psi_n}(X_n^{\oper{cyc}})$. 
\vskip 2pt

The TDO  $\D_\psi(X_n)$ may be identified with a subsheaf of the direct
image of $\D_\psi(X_n^{\oper{cyc}})$ under the open embedding
$X_n^{\oper{cyc}}\hookrightarrow X_n$. This way, one obtains a
restriction morphism
$$
r:\ \supp_\idot\D_\psi(X_n)/\supp_\idot\D_\psi(X_n){\mathfrak g}_\kappa
\to
\supp_\idot\D_\psi( 
X_n^{\oper{cyc}})/\supp_\idot\D_\psi(X_n^{\oper{cyc}}){\mathfrak
g}_\kappa
$$

We are now going to generalize 
\cite{GG} formula (6.15), and construct, for any 
$\kappa\in\C,\,\psi\in H^2(C^n,\Omega_C^{1,2})$,
a canonical  `radial part' isomorphism 
\begin{equation}\label{radmap}{\mathsf{rad}}:\ \big(\supp_\idot\D_{\psi_n}( 
X_n^\reg)/\supp_\idot\D_{\psi_n}(X_n^\reg)
{\mathfrak
g}_\kappa\big)^{GL_n}\iso\D_{\psi_n}(C^{(n)}\sminus\bD),
\quad u\mto\rad{u}_\kappa,
\end{equation}
of sheaves    of filtered associative algebras on
$X_n^\reg/GL_n=C^{(n)}\sminus\bD.$

To this end, assume first that the class $\psi=c_1(\ssl),$ is the first Chern
class of a line bundle $\ssl$ on $C$. Then, we have
$\psi_n=c_1(\ssl^{(n)})$.
Further, we have the pull-back $\supp^*\ssl{}^{(n)}$,  a 
line bundle on $X_n\cyc$ equipped with a natural
$GL_n$-equivariant structure.
Write $\D(C^{(n)}, \ssl{}^{(n)}),$ resp.
 $\D(X_n\cyc,\supp^*\ssl{}^{(n)})$ for the sheaf of TDO
on $C^{(n)},$ acting in the line bundle $\ssl{}^{(n)},$ resp. TDO
on  $X_n^\reg,$ acting in the line bundle $\supp^*\ssl{}^{(n)}.$

It is clear that,  on $X_n^\reg,$ one has a sheaf isomorphism
$$
\ssl{}^{(n)}\iso \wt\ssl{}^{(n)}\!,\en
s\mto \ff\cd \supp^*(s),\en\text{where}\en
\wt\ssl{}^{(n)}:=\{\wt{s}\in \supp_*\supp^*\ssl{}^{(n)}\;\,\big|\;\, g(\wt{s})=
(\det g)\cd \wt{s},\;\forall g\in GL_n\}.
$$

Observe further that, for  any $GL_n$-invariant twisted differential operator
$u\in \D(X_n^\reg,\supp^*\ssl{}^{(n)})$, we have
$u(\wt\ssl{}^{(n)})\sset \wt\ssl{}^{(n)}.$
We deduce that, for any  $\kappa\in\C,$
and any $GL_n$-invariant twisted differential operator
$u\in \D(X_n^\reg,\supp^*\ssl{}^{(n)})$ the assignment
$$\rad{u}_\kappa:\ \ssl{}^{(n)}\to\ssl{}^{(n)},\quad
s\mto \ff^{-\kappa}\cd u(\ff^\kappa\cd s),
$$
is well-defined and, moreover, it is given by a uniquely determined
twisted  differential operator
$\rad{u}_\kappa\in$ $\D_{\psi_n}(C^{(n)}\sminus\bD).$

The above construction of the map $u\mto \rad{u}_\kappa$ may be adapted to cover the general case,
where the class $\psi$ is not necessarily of the form $c_1(\scr L)$
for a line bundle $\scr L$.
The map $u\mto \rad{u}_\kappa$ is, by
definition, the radial part homomorphism $\mathsf{rad}$ that
appears in \eqref{radmap}.

The group $\syn$ acts freely on $C^n\sminus\Delta_n$, and 
we have $(C^n\sminus\Delta_n)/\syn=C^{(n)}\sminus\bD$.
Hence,
$\D_{\psi_n}(C^{(n)}\sminus\bD)\simeq
\D_{\psi^{\boxtimes n}}(C^n\sminus\Delta_n)^\syn$. The
lift of the determinant line bundle $\BCL$ to $C^n$ has a canonical
rational section $\bdel^{\frac{1}{2}}$. The restriction of this section to
$C^n\sminus\Delta_n$ is invertible, and
we let $\mathsf{twist}: \D_{\psi_n}(C^{(n)}\sminus\bD)\iso
\D_{\psi_n+1}(C^{(n)}\sminus\bD)$ be an isomorphism of TDO
induced via
conjugation by $\bdel^{\frac{1}{2}}$.

Combining all the above, we obtain the following morphism
of sheaves of filtered algebras on $C^{(n)}$, where $\mathsf{j}$ stands for the
open embedding $C^{(n)}\sminus\bD\hookrightarrow C^{(n)}$:
\begin{equation}\label{CH}{\mathsf{twist}\ccirc\mathsf{rad}\ccirc r}:\
\left(\frac{\supp_\idot\D_\psi(X_n)}
{\supp_\idot\D_\psi(X_n){\mathfrak g}_\kappa}\right)^{GL_n}
\too
\mathsf{j}_*\D_{\psi_n+1}(C^{(n)}\sminus\bD).
\end{equation}
Here, the algebra on the left
is   the quantum
Hamiltonian reduction of $\D_\psi(X_n^\reg)$ at the point 
 $\kappa\cdot tr\in\gl_n^*$.

Our main result about quantum Hamiltonian reduction reads

\begin{theorem}
\label{sheaf eg} The image of the composite morphism in \eqref{CH}
is equal to  the spherical
Cherednik subalgebra. Moreover, this composite yields
a filtered algebra isomorphism
$$
\big(\supp_\idot\D_{\psi_n}(X_n)/
\supp_\idot\D_{\psi_n}(X_n){\mathfrak g}_\kappa\big)^{GL_n}
\iso \e\sH_{\kappa,\psi_n+1}\e,
$$
as well as the associated graded
 algebra isomorphism
$$
\gr\big(\supp_\idot\D_{\psi_n}(X_n)
/\supp_\idot\D_{\psi_n}(X_n){\mathfrak g}_\kappa\big)^{GL_n}
\iso \gr(\e\sH_{\kappa,\psi_n+1}\e).
$$
\end{theorem}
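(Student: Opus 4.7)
My approach proceeds in two stages: first identify the quantum Hamiltonian reduction with the spherical Cherednik algebra generically (over $C^{(n)} \setminus \bD$) via the explicit radial part map, and then extend this identification across the discriminant by matching the Dunkl operators. The key observation is that every sheaf in the statement (the TDO on $X_n$, its left ideal, the Cherednik algebra, and the $GL_n$-invariants) behaves functorially under étale base change on $C$, so Proposition \ref{etale_prop} lets me reduce the extension-across-$\bD$ step to the case $C = \BA^1$, in which the assertion is (essentially) the main theorem of \cite{GG}.

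\emph{Generic step.} By Lemma \ref{torsor}(i) and the definition of $X_n^{\reg}$, the $GL_n$-action on $X_n^{\reg}$ is free with quotient $C^{(n)} \setminus \bD$. Standard quantum reduction along a free action then identifies $\big(\supp_\idot\D_{\psi_n}(X_n^{\reg})/\supp_\idot\D_{\psi_n}(X_n^{\reg})\mathfrak{g}_\kappa\big)^{GL_n}$ with the sheaf of TDO on $C^{(n)} \setminus \bD$ twisted by $\kappa\cd c_1(\BCL)$; concretely, the isomorphism is realized by conjugation by $\ff^{\kappa}$, using that $\ff$ is a $GL_n$-semi-invariant of weight $\det$ by Proposition \ref{bdel}. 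Composing with conjugation by $\bdel^{1/2}$ (a well-defined invertible rational section of $\BCL$ on $C^n\sminus\Delta_n$) and invoking Lemma \ref{koren'} shifts the TDO parameter by $+1$, which yields the radial part map \eqref{radmap} and establishes the desired filtered algebra isomorphism on the open locus $C^{(n)}\sminus\bD$.

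\emph{Main obstacle: matching with Dunkl operators.} I then have to verify that the image of the composite \eqref{CH} lies inside $\mathsf{j}_*\e\sH_{\kappa,\psi_n+1}\e$ and exhausts it, i.e., that each Etingof Dunkl operator $D_v$ arises as the radial part of some $GL_n$-invariant element of $\supp_\idot\D_{\psi_n}(X_n)/\supp_\idot\D_{\psi_n}(X_n)\mathfrak{g}_\kappa$. This is the heart of the proof. By Proposition \ref{etale_prop}, any point of $C^{(n)}$ lies in an étale chart $U_s \subset \rep_{C_s}^n$ equipped with an étale morphism $\rep p_s : U_s \to \mathfrak{gl}_n = \rep_{\BA^1}^n$. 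Since étale pullback on $C$ commutes both with the Etingof construction (which is local on $C$ and only uses residues at the diagonal, see \cite{Eti}, 2.9) and with quantum Hamiltonian reduction (TDO, left ideals and $GL_n$-invariants all commute with étale base change), the problem reduces to the case $C = \BA^1$, which is proven in \cite{GG}. The delicate technical point is checking that the auxiliary 1-form $\eta$ and rational functions $f^v_{ij}$ entering the definition of $D_v$ produce étale-compatible data; this follows from the observation that $D_v$ modulo the $GL_n$-invariants in the left ideal depends only on the polar parts along the diagonals, hence only on data that transforms correctly under étale pullback.

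\emph{Associated graded.} The second assertion follows from the first by passing to $\gr$. On the reduction side, one has
$$\gr\big(\supp_\idot\D_{\psi_n}(X_n)/\supp_\idot\D_{\psi_n}(X_n)\mathfrak{g}_\kappa\big)^{GL_n} \;\cong\; \CO\big(\mu^{-1}(0)/\!\!/ GL_n\big),$$
where $\mu : T^*X_n \to \mathfrak{gl}_n^*$ is the moment map for the cotangent-lifted $GL_n$-action; using Lemma \ref{tangent} and the identification $X_n^{\oper{cyc}}/GL_n = C^{(n)}$, this classical reduction is identified with $p_*\CO_{(T^*C)^{(n)}}$. On the other side, Etingof's isomorphism \eqref{gralg} yields the same answer for $\gr(\e\sH_{\kappa,\psi_n+1}\e)$, and the étale reduction to \cite{GG} at the associated-graded level ensures that the two identifications agree with the $\gr$ of the map constructed above.
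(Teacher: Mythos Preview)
Your approach is correct and leads to the same conclusion, but it differs from the paper's route in an instructive way. The paper does not use the \'etale charts of Proposition~\ref{etale_prop} here; instead, it proceeds via \emph{factorization} (Lemma~\ref{factorization}) and a second-order comparison lemma (Lemma~\ref{DD}): one shows that any element of $F_2$ of the image and any element of $F_2(\e\sH_{\kappa,\psi_n+1}\e)$ with the same principal symbol differ by a regular function, so $F_2\sA_{n,\kappa,\psi}=F_2\CH_{n,\kappa,\psi}$; then one invokes the (local) fact that $\e\sH\e$ is generated by its degree~$\leq 2$ part, giving $\sA_{n,\kappa,\psi}\hookrightarrow\CH_{n,\kappa,\psi}$. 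The isomorphism is then checked at the associated-graded level after reducing, by factorization and induction on $n$, to the formal disk, where it is \cite{GG}.

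Your \'etale reduction is more direct: you bypass the factorization lemma, the second-order comparison, and the generation-in-degree-$\leq 2$ argument entirely by pulling back the whole statement from $\BA^1$ along the $GL_n$-equivariant \'etale morphism $\rep p_s\times\Id_V$. This is legitimate---the paper itself uses precisely this \'etale compatibility of the Cherednik sheaf in the proof of Proposition~\ref{morita}, and TDO, the moment-map ideal, and $GL_n$-invariants all base-change along \'etale $GL_n$-equivariant maps. What you lose is the intermediate structural information: the paper's argument explains \emph{why} the isomorphism holds by isolating the role of second-order operators (which is where the Dunkl deformation lives), whereas your argument is a black-box transport of the result from \cite{GG}. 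Both are valid; the paper's route yields Lemma~\ref{DD} as a byproduct, which has independent interest.

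One minor point: in your ``Associated graded'' paragraph you assert $\gr(\text{reduction})\cong\CO(\mu^{-1}(0)/\!\!/ GL_n)$ as if it were automatic; in fact this equality is part of what is being proved (it encodes flatness of the quantum moment map), and your subsequent remark that ``the \'etale reduction to \cite{GG} at the associated-graded level ensures'' the identification is the actual argument. The paper handles this in the same way, by reducing to the local model.
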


\subsection{Proof of Theorem \ref{sheaf eg}.} It will be convenient
to introduce the following
simplified notation.
Let
$\CH_{n,\kappa,\psi}\subset\mathsf{j}_*\D_{\psi_n+1}(C^{(n)}\sminus\bD)$ 
be the image
of the composite morphism in~\eqref{CH} (note the shift
$\psi_n+1$ on the right); also,
for the  spherical subalgebra,  write
\begin{equation}\label{CHCA}
\sA_{n,\kappa,\psi}:=\e\sH_{\kappa,\psi_n+1}\e.
\end{equation}

We will use the factorization isomorphism
\eqref{fac} for $k+m=n$. Write $j:{\gen}C^{(k,m)}\into$
$C^{(k,m)}:=C^{(k)}\times C^{(m)}$
for the open imbedding and
$pr: {\gen}C^{(k,m)}\into C^{(k,m)}\onto
C^{(n)}$
for the composite projection.

A natural factorization property for the determinant bundle
gives rise to a canonical isomorphism of sheaves of TDO,
$$pr^*\D_{\psi_n+1}(C^{(n)})\simeq
j^*(\D_{\psi_k+1}(C^{(k)})\boxtimes
\D_{\psi_m+1}(C^{(m)})).$$
%We will denote by
%$\CH_{n,\kappa,\psi_n}\subset\D_{\psi_n+1}(C^{(n)})$ the image
%of the composite morphism in~\eqref{CH}.

\begin{lemma}
\label{factorization}
The above isomorphism restricts to the isomorphism of subalgebras
$pr^*\CH_{n,\kappa,\psi}\simeq
j^*(\CH_{k,\kappa,\psi}\boxtimes\CH_{m,\kappa,\psi})$.
\end{lemma}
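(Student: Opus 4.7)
The plan is to trace the factorization property \eqref{fac} of the representation scheme through every ingredient entering the definition of $\CH_{n,\kappa,\psi}$: the twisted sheaf $\D_{\psi_n}(X_n)$, the moment-map Lie subalgebra $\fg_\kappa$, the radial-part isomorphism $\mathsf{rad}$, and the conjugation by $\bdel^{1/2}$. First I would upgrade \eqref{fac} to the level of the total space $X_n = \rep^n_C \times V$: over ${\gen}C^{(k,m)}$, a sheaf $\CF = \CF_1 \oplus \CF_2$ supported on disjoint divisors splits canonically together with its global sections, so $V = \C^n$ breaks up as $\C^k \oplus \C^m$ with respect to the Levi $GL_k \times GL_m \subset GL_n$, giving
$$
X_n\underset{C^{(n)}}{\times}{\gen}C^{(k,m)} \;\cong\; \Big(GL_n \overset{GL_k \times GL_m}{\times}(X_k \times X_m)\Big)\underset{C^{(k,m)}}{\times}{\gen}C^{(k,m)}.
$$

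Next I would verify that, under this identification, the twisted sheaf $\D_{\psi_n+1}(X_n)$ restricts to the external product $\D_{\psi_k+1}(X_k) \boxtimes \D_{\psi_m+1}(X_m)$ on $X_k \times X_m$, pulled up the $GL_n$-fibration. This rests on the multiplicativity of the determinant line bundle: Lemma \ref{koren'}, together with the analogous statement for $\ssl^{(n)}$, shows that over the disjoint-support locus $\BCL$ factorizes as $\BCL_k \boxtimes \BCL_m$, so that both $\psi_n$ and the unit shift $c_1(\BCL)$ are additive under $pr^*$. The rational section $\bdel$ likewise splits as $\bdel_k \boxtimes \bdel_m$ with no correction, since the Vandermonde factor across the two divisors is invertible on ${\gen}C^{(k,m)}$; hence the $\mathsf{twist}$ isomorphism factorizes as well.

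The core step is the compatibility of quantum Hamiltonian reduction with parabolic induction. I would invoke the standard fact that, for an induced space $Y = G \times^H Z$ with the diagonal action, the reduction $(\supp_\idot\D(Y)/\supp_\idot\D(Y)\fg_\kappa)^G$ is canonically identified with $(\supp_\idot\D(Z)/\supp_\idot\D(Z)\fh_\kappa)^H$ at the restricted character; applied to $G = GL_n$ and the Levi $H = GL_k \times GL_m$, together with the observation that $\kappa\cdot\mathrm{tr}_{\mathfrak{gl}_n}$ restricts to $\kappa\cdot(\mathrm{tr}_{\mathfrak{gl}_k} \oplus \mathrm{tr}_{\mathfrak{gl}_m})$, this implies that the reduction downstairs splits as the external product of the reductions over $C^{(k)}$ and $C^{(m)}$. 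Composing with the factorized $\mathsf{twist}$ and $\mathsf{rad}$ then identifies $pr^*\CH_{n,\kappa,\psi}$ with $j^*(\CH_{k,\kappa,\psi} \boxtimes \CH_{m,\kappa,\psi})$.

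The main obstacle I anticipate is the parabolic reduction step: one must carefully match the left ideal generated by $\fg_{n,\kappa}$ with that generated by $\fg_{k,\kappa} \oplus \fg_{m,\kappa}$ after restriction to the disjoint-support locus, absorbing the contribution of the unipotent radical of the parabolic $P \supset GL_k \times GL_m$ as a smooth descent along the $GL_n/(GL_k \times GL_m)$-direction rather than an extra relation. Once this bookkeeping is in place, all remaining compatibilities (for the TDO, for $\bdel^{1/2}$, and for the radial part) are diagrammatic and follow from the constructions in \S\ref{cvetochki}.
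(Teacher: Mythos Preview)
Your proposal is correct and follows essentially the same route as the paper. The paper likewise first upgrades the factorization to $X_n$ (writing out the explicit induced-space isomorphism $X_n\times_{C^{(n)}}{\gen}C^{(k,m)}\simeq GL_{k+m}\times^{GL_k\times GL_m}(X_k\times X_m)\times_{C^{(k,m)}}{\gen}C^{(k,m)}$), and then packages your ``core step'' and ``main obstacle'' together into a single general statement: for $G'\subset G''$, a $G'$-variety $X'$ with TDO $\D'$, the induced space $X''=G''\times^{G'}X'$ with lifted TDO $\D''$, and a $G''$-invariant character $\chi''$ restricting to $\chi'$, one has $(\D''/\D''\fg''_{\chi''})^{G''}\simeq(\D'/\D'\fg'_{\chi'})^{G'}$; this is exactly your ``standard fact'' about parabolic induction, and the paper declares it ``easy to verify'' rather than spelling out the unipotent-radical bookkeeping you flag.
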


\proof Let $(\CF_1,v_1)\in X_k$ and $(\CF_2,v_2)\in X_m$ be such that
the sheaves $\CF_1$ and $\CF_2$ have  disjoint
supports. Then, the stabilizer in $GL_n$ of the
point $(\CF_1\oplus\CF_2,v_1\oplus
v_2)\in X_n,\, n=k+m,$ is equal
to the product of the stabilizers
of $(\CF_1,v_1)$ and $(\CF_2,v_2)$ in $GL_k$ and $GL_m$, respectively.
This yields the following factorization isomorphism,
cf.~\eqref{fac},
\begin{align*}
\left(GL_{k+m}\overset{GL_k\times GL_m}{\times}(X_k\times X_m)
\right)\underset{{C^{(k)}\times C^{(m)}}}\times \ {\gen}C^{(k,m)}
\simeq X_n\times_{C^{(n)}}\ {\gen}C^{(k,m)},\\
(g,\CF_1,\vartheta_1,v_1,\CF_2,\vartheta_2,v_2)\longmapsto
(\CF_1\oplus\CF_2,(\vartheta_1\oplus\vartheta_2)\circ
g^{-1},g(v_1\oplus v_2)).
\end{align*}

Now, the desired isomorphism of subalgebras
in the statement of the lemma is a particular case of the
following situation. We have a subgroup $G'\subset G''$ (in our case
$G'=GL_k\times GL_m,\ G''=GL_n$), and a $G'$-variety $X'$ with a TDO
$\D'$ equipped with an action of $G'$ (in our case $X'=(X_k\times X_m)
\underset{{C^{(k)}\times C^{(m)}}}\times \ {\gen}C^{(k,m)}$, and 
$\D'=j^*(\D_{\psi_k}\boxtimes\D_{\psi_m})$). We have a $G''$-invariant
linear functional $\chi''$ on the Lie algebra $\mathfrak{g}''$ whose
restriction to $\mathfrak{g}'$ is denoted by $\chi'$ (in our case
$\chi''=(\kappa-1)\cdot tr$). We set $X''=G''\overset{G'}{\times}X'$;
it is equipped with the TDO $\D''$ lifted from $X'$, acted upon by
$G''$ (in our case $X''=X_n\times_{C^{(n)}}\ {\gen}C^{(k,m)}$, and 
$\D''=pr^*\D_{\psi_n}$). Then, it is easy to verify that one has
 $(\D''/\D''{\mathfrak
  g}''_{\chi''})^{G''}\simeq (\D'/\D'{\mathfrak
  g}'_{\chi'})^{G'}$.
\endproof

\begin{lemma}\label{DD} Let $U\subset C^{(n)}$ be an open subset
and let  $D_1\in\Gamma(U, \CH_{n,\kappa,\psi})
\subset\D_{\psi_n+1}(U)$,
resp. 
$D_2\in\Gamma(U,\sA_{n,\kappa,\psi})$,
 be a pair of
second order twisted
differential operator on $U$ with equal  principal symbols.

Then, the difference $D_1-D_2$ is a zero order differential operator, more precisely, it is
the operator of multiplication by
a regular function on $U$.
\end{lemma}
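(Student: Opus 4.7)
The plan is to show that the first-order principal symbol of $D := D_1 - D_2$ vanishes. Since $D_1$ and $D_2$ have equal order-two principal symbols by hypothesis, $D$ is a regular section of $\D_{\psi_n+1}(U)$ of order at most one. From the short exact sequence
\begin{equation*}
0 \to \CO_U \to \D_{\psi_n+1}^{\leq 1}(U) \to T_U \to 0,
\end{equation*}
once we establish that the symbol $\sigma_1(D) \in \Gamma(U, T_U)$ vanishes, we conclude that $D$ is the operator of multiplication by a regular function on $U$, as desired.

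Vanishing of a vector field is a local question, so we may shrink $U$. Both algebras $\CH_{n,\kappa,\psi}$ and $\sA_{n,\kappa,\psi}$ are constructed in a manner compatible with \'etale base change along $C$: the quantum Hamiltonian reduction producing $\CH_{n,\kappa,\psi}$ is local on $C^{(n)}$, and $\sA_{n,\kappa,\psi}$ is generated by regular functions on $C^n$ and by Dunkl operators whose formulas pull back naturally along \'etale morphisms of curves. Combining this with Proposition~\ref{etale_prop} (and Lemma~\ref{etal}, which produces an \'etale cover of the good locus in $C^{(n)}$ by opens mapping \'etale to $(\BA^1)^{(n)}$), we reduce to the case $C = \BA^1$, where $\rep^n_C = \gl_n$ and the radial-part construction of~\eqref{radmap} reduces to the one studied in~\cite{GG}.

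In the affine case, the matching proceeds by direct analysis of second-order generators. On the Cherednik side, the square of an Etingof--Dunkl operator is an element of $\sA_{n,\kappa,\psi}$ whose first-order correction (involving the reflection terms $\sum_{i\ne j}(s_{ij}-1)\otimes f^v_{ij}$ scaled by $\kappa$) is uniquely determined by its order-two symbol together with the parameter $\kappa$. On the Hamiltonian-reduction side, conjugation by the semi-invariant section $\ff^\kappa$ in~\eqref{radmap} likewise produces, from a $GL_n$-invariant order-two operator, a first-order correction uniquely determined by the principal symbol and $\kappa$. The content of the lemma is that these two first-order corrections coincide modulo $\CO_{C^{(n)}}$. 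By the factorization property (Lemma~\ref{factorization} together with its parallel for $\sA$), the verification reduces to the case $n=2$ of two colliding points; this is the main technical obstacle, but it amounts to an elementary direct calculation using the explicit formula of Proposition~\ref{bdel} for the semi-invariant $\ff$ together with Etingof's Dunkl formula, and was essentially carried out in~\cite{GG} for the rational case.
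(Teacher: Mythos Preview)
Your overall strategy—reduce the vanishing of $\sigma_1(D)$ to a local question, then to a model curve where the computation is known—mirrors the paper's approach. However, there is a genuine gap in your use of factorization.

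Factorization (Lemma~\ref{factorization}) separates a point $(k_1c_1+\ldots+k_lc_l)\in C^{(n)}$ with pairwise distinct $c_i$ into the factors $(k_ic_i)$; combined with induction on $n$, this reduces the problem to the \emph{diagonal} case $\underline{c}=(nc)$, not to $n=2$. At a point where all $n$ points of $C$ coincide, factorization gives you nothing further, and the first-order matching must still be verified there for arbitrary $n$. Your sentence ``the verification reduces to the case $n=2$ of two colliding points'' is therefore not justified, and this is precisely the hard case that remains.

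The paper's proof handles exactly this residual step: after the induction via factorization isolates the diagonal point $(nc)$, one may replace $C$ by the formal disk around $c$, and the required computation is then the explicit one carried out in \cite{GGS},~\S5, together with the argument in the proof of \cite{Eti},~Proposition~2.18. Your appeal to \cite{GG} and an ``elementary direct calculation'' for $n=2$ does not substitute for this.

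Your \'etale reduction to $\BA^1$ via Proposition~\ref{etale_prop} is a reasonable alternative to the paper's passage to formal neighbourhoods, and it would complete the argument cleanly if you then invoked the full result for $C=\BA^1$ and all $n$ directly from the literature. But as written, the claimed final reduction to $n=2$ does not follow from the factorization property and leaves the diagonal case open.
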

\proof
It is enough to prove the statement of the lemma in
the formal neighbourhood of each point $\underline{c}=
(k_1c_1+\ldots+k_lc_l)\in U\subset
C^{(n)}$, where $c_1,\ldots,c_l$ are pairwise distinct and $k_1+\ldots+k_l=n$.
By induction in $n$, we are reduced to the diagonal case
$\underline{c}=(nc)$. Then it suffices to take the formal
disk around $c$ for $C$.
In the latter case, our claim follows from the explicit
calculation in \cite{GGS}, \S5,
and the proof of~Proposition~2.18 in~\cite{Eti}.
\endproof

\proof[Proof of Theorem \ref{sheaf eg}] We equip
the sheaf $\CH_{n,\kappa,\psi}$, defined above \eqref{gralg},
with the standard increasing filtration induced by
 the standard increasing filtration on $\D_{\psi_n}(X_n),$
by the order of differential operator.
Let 
$\gr\CH_{n,\kappa,\psi}$ denote
the  associated graded sheaf. We also have
 the increasing filtration on the algebra $\sA_{n,\kappa,\psi},$
 cf. \eqref{gralg}. Further, Lemma \ref{DD} yields
$F_2\sA_{n,\kappa,\psi}=
F_2\CH_{n,\kappa,\psi}$. 

Next, we observe that the spherical Cherednik subalgebra
$\sA_{n,\kappa,\psi}$ is generated by the subsheaf
$F_2\sA_{n,\kappa,\psi}$. Indeed, it is enough
to prove the equality of $\sA_{n,\kappa,\psi}$ with the
 subsheaf  generated by $F_2\sA_{n,\kappa,\psi}$ 
in the formal neighbourhood of any point $\underline{c}\in
C^{(n)}$. Arguing by induction in $n$ as in the proof of the lemma, it
suffices to consider the case $\underline{c}=(nc)$.
We then to take the formal
disk around $c$ for $C$. In such a case, the desired statement is proved 
in~Section~10 of~\cite{EG}.

Thus, we have  filtered algebra morphisms 
$$\sA_{n,\kappa,\psi}\hookrightarrow
\CH_{n,\kappa,\psi}\twoheadleftarrow
(\supp_\idot\D_{\psi_n}(X_n)/\supp_\idot\D_{\psi_n}(X_n){\mathfrak
  g}_\kappa)^{GL_n}.$$ 
To prove that these morphisms are isomorphisms, it suffices 
to show that the induced morphisms of associated graded algebras,
$$\gr\sA_{n,\kappa,\psi}\to
\gr\CH_{n,\kappa,\psi}\leftarrow
\gr(\supp_\idot\D_{\psi_n}(X_n)/\supp_\idot\D_{\psi_n}(X_n){\mathfrak
  g}_\kappa)^{GL_n},$$ 
are isomorphisms. Reasoning as above, we may further reduce the
proof of this last statement to the
case where $C$ is the formal disk around $c$. The latter case follows from
~\cite{GG}, page~40. The theorem is proved.
\endproof

\section{Character sheaves}\label{char_sec}
\subsection{The moment map.} Fix   a
smooth curve $C$ and let $\De\into C^2=C\times C$ be the diagonal.

Recall  the smooth scheme $X_n=\rep_C^n\times V$, and let
$T^*X_n=(T^*\rrep)\times  V\times V^*$ denote the total
space of the cotangent bundle on $X_n$. We will write a point of $T^*X_n$
as a quadruple 
$$(\CF, y, i,j)\quad\text{where}\
\CF\in \rrep,\;y\in \Ga\big(C^2,
(\CF\boxtimes\CF^\vee)(-\De)\big),\,i\in V,\;j\in V^*.
$$

The group $GL(V)$ acts diagonally on
 $\rrep\times V$. This gives  a Hamiltonian $GL(V)$-action on $T^*X_n,$
with moment map $\mu$.  Given $\CF\in \rrep,$ we will also use the map
\begin{equation}\label{nu}
\nu_\CF: \
\Ga\big(C^2,
(\CF\boxtimes\CF^\vee)(-\De)\big)\into
\Ga(C^2,
\CF\boxtimes\CF^\vee)\iso \End V,
\end{equation}
induced by
the sheaf imbedding $\oo_{C^2}(-\De)\into \oo_{C^2}$, cf. \eqref{can}.

\begin{lemma} The moment map $\mu$
is given by the formula, cf. \eqref{nu},
$$\mu:\
(T^*\rrep)\times  V\times V^*\to \End V=\Lie GL(V),
\quad (\CF,y,i,j)\mto \nu_\CF(y)+ i\o j.
$$
\end{lemma}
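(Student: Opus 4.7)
The plan is to exploit the diagonal structure of the $GL(V)$-action: since $GL(V)$ acts diagonally on $X_n=\rep^n_C\times V$, the moment map for the lifted Hamiltonian action on $T^*X_n=T^*\rep^n_C\times V\times V^*$ decomposes as $\mu=\mu_{\rep}+\mu_V$, where each summand is the moment map for the cotangent lift on the corresponding factor. It therefore suffices to identify the two summands separately.

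For the $V$-factor, this is a standard direct calculation: the moment map of $GL(V)$ acting on $T^*V=V\times V^*$ by $g\cdot(i,j)=(gi,j\ccirc g^{-1})$ is the rank-one map $(i,j)\mapsto i\o j\in \End V$, as one checks from the definition $d\langle\mu_V,u\rangle=\iota_{u_X}\omega$ (with the symplectic form $\omega$ on $T^*V$), since $\langle i\o j,u\rangle_{\End V}=j(ui)$.

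For the $\rep^n_C$-factor, the core of the argument is to verify that, under the identifications of Lemma \ref{tangent}, the natural pairing
\[
T_\CF(\rrep)\otimes T^*_\CF(\rrep)\;\iso\;\Ga\big(C^2,(\CF\boxtimes\CF^\vee)(\De)\big)\otimes \Ga\big(C^2,(\CF\boxtimes\CF^\vee)(-\De)\big)\too \C
\]
sends $(\xi_u,y)$ to $\Tr\big(u\cd\nu_\CF(y)\big)$, where $\xi_u\in T_\CF(\rrep)$ is the fundamental vector field generated by $u\in\gl(V)$. Indeed, viewing $\xi_u$ inside $\Hom(\CK er_\CF,\CF)$ as the composition $\CK er_\CF\into V\o\oo_C\stackrel{u\o 1}\too V\o\oo_C\onto\CF$, the corresponding section of $(\CF\boxtimes\CF^\vee)(\De)$ is the $\De$-supported `delta-section' determined by the action of $u$ on $V=\Gamma(C,\CF)$. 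Pairing this with a section $y$ that vanishes along $\De$ yields a section of $\CF\boxtimes\CF^\vee$ whose global section, under the canonical trivialization \eqref{can}, is precisely $\nu_\CF(y)$; pairing with $u\in\gl(V)$ via the trace then produces the claimed formula.

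The verification of this pairing statement is the main obstacle, since it involves tracing carefully through the Serre duality isomorphism in Lemma \ref{tangent}. The cleanest way is to reduce to a local calculation: by the factorization property and the \'etale local model of Proposition \ref{etale_prop} (or by working directly with the formal disc as in \S\ref{point}), one is reduced to the case $C=\BA^1$. In that case $\rrep=\End V$ by Example \ref{C=C}, $T^*\rrep=\End V\times\End V$, and the formula $\mu_{\rep}(X,Y)=[X,Y]$ for the moment map of the conjugation action is classical; one then checks that $\nu_\CF(Y)$ evaluates to $XY-YX$ under the identifications used here, which is a routine residue computation using the explicit description of $\CF$ as a torsion $\C[t]$-module. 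Combining the two pieces yields $\mu(\CF,y,i,j)=\nu_\CF(y)+i\o j$.
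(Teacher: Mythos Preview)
The paper itself provides no proof of this lemma: immediately after the statement the authors write ``We leave the proof to the reader.'' So there is nothing to compare against on the paper's side, and your sketch in fact goes considerably further than the paper does.

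Your strategy is the natural one and is correct. The additivity of moment maps for a diagonal action reduces the problem to the two factors; the $V$-factor is the textbook computation you give; and for the $\rep^n_C$-factor your plan to reduce via Proposition \ref{etale_prop} (or the formal disc of \S\ref{point}) to the case $C=\BA^1$, where the moment map is the classical $[X,Y]$ of Example \ref{primer}, is exactly the right move. The one place where a reader might want slightly more than ``routine residue computation'' is the verification that, under the identification of Lemma \ref{tangent}, the map $\nu_\CF$ really does send the cotangent coordinate $Y$ to $[X,Y]$ in the $\BA^1$ model; this amounts to unwinding how multiplication by $(t_1-t_2)$ on $\Ga\big((\CF\boxtimes\CF^\vee)(-\Delta)\big)$ interacts with the two $\C[t]$-module structures on $V$ given by $X$. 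But this is indeed a direct check once the identifications are spelled out, and the paper clearly regards the whole lemma as an exercise at this level.
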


We leave the proof to the reader.

\begin{example} 
\label{primer}
In the special  case $C=\BA^1$, we have
$\rrep=\End V\cong\mathfrak{gl}_n$. In this case, the moment map reads,
see
\cite{GG},
$$\mu:\ \mathfrak{gl}_n\times \mathfrak{gl}_n\times  V\times V^*\too\mathfrak{gl}_n,\quad
(x,y,i,j)\mto [x,y] +i\o j.
$$

Similarly, in the case $C=\C^\times$,  the moment map reads
$$
\mu:\ GL_n\times\mathfrak{gl}_n\times V\times V^*\too\mathfrak{gl}_n,
\quad(x,y,i,j)\mto
xyx^{-1}-y+i\o j.$$
\end{example}

\subsection{A categorical quotient.}
The goal of this
subsection is to construct an isomorphism
$(T^*C)^{(n)}\simeq\mu^{-1}(0)/\!/GL_n$ (the categorical quotient).
To stress the dependence on $n$ we will sometimes write $\mu_n$ for the
moment map $\mu$.

Note that we have the direct sum morphism 
$$X_k\times X_m\to X_{k+m},\
(\CF,\vartheta_1,v_1;\CG,\vartheta_2,v_2)\mapsto(\CF\oplus\CG,
\vartheta_1\oplus\vartheta_2,v_1\oplus v_2).
$$
This morphism induces a similar   direct
sum morphism $T^*X_k\times T^*X_m\to T^*X_{k+m}$, 
{\small
$$\Gamma(\CF\boxtimes\CF^\vee(-\Delta))\times
\Gamma(\CG\boxtimes\CG^\vee(-\Delta))\ni (y_1,y_2)\mto
y_1\oplus y_2\in \Gamma((\CF\oplus\CG)\boxtimes
(\CF\oplus\CG)^\vee(-\Delta)),
$$
}
where we have used simplified notation $\Gamma(-)=\Gamma(C^2,-)$.

Clearly, we have
$T^*X_1=T^*C\times\BA^1\times(\BA^1)^*$. Iterating the direct sum
morphism $n$ times, we obtain a morphism
$(T^*C\times\BA^1\times(\BA^1)^*)^n\to T^*X_n$. Restricting this map
further to the
 product of $n$ copies of the
subset $T^*C\simeq T^*C\times\{1\}\times\{0\}\subset
T^*C\times \BA^1\times(\BA^1)^*$, we obtain a morphism 
$(T^*C)^n\to T^*X_n$. The image 
of the latter morphism  is
clearly contained in $\mu_n^{-1}(0)$. It is also clear that the
composite morphism $(T^*C)^n\to T^*X_n\to T^*X_n/\!/GL_n$ factors
through the symmetrization projection $(T^*C)^n\to(T^*C)^{(n)}$. 

This way, we have constructed a morphism
$\varepsilon:\
(T^*C)^{(n)}\to\mu_n^{-1}(0)/\!/GL_n$.

\begin{lemma}
\label{evid}
$\varepsilon:\ (T^*C)^{(n)}\to\mu_n^{-1}(0)/\!/GL_n$ is an isomorphism.
\end{lemma}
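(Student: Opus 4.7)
The plan is to combine the factorization property~\eqref{fac} with the étale reduction of Proposition~\ref{etale_prop}, reducing the assertion to the case $C = \BA^1$ treated in~\cite{GG}. I would first verify the base case $n = 1$: here $\rep^1_C \cong C$ (sending $c$ to the skyscraper $\CO_c$), and the ideal-sheaf inclusion $\CO_{C^2}(-\Delta) \hookrightarrow \CO_{C^2}$ vanishes upon restriction to the skyscraper $\CO_c \boxtimes \CO_c^\vee$ at $(c,c) \in \Delta$, so that $\nu_{\CO_c} = 0$ and hence $\mu_1(\CO_c, \omega, i, j) = ij$. Since $GL_1$ acts trivially on $\rep^1_C$ and with opposite weights on $(i, j)$, the $GL_1$-invariants on $\mu_1^{-1}(0) = T^*C \times \{ij = 0\}$ are generated by the coordinates on $T^*C$, giving $\mu_1^{-1}(0)/\!/GL_1 \iso T^*C = (T^*C)^{(1)}$ and identifying $\varepsilon_1$ with the identity.

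Second, the factorization isomorphism~\eqref{fac} is $GL_n$-equivariant, lifts to cotangent bundles, and is compatible with $\mu_n$ and with the direct-sum construction of $\varepsilon_n$. Combining this with the $n=1$ case, I would conclude that $\varepsilon_n$ restricts to an isomorphism over the preimage of the open subset ${}^0 C^{(n)} \subset C^{(n)}$ of $n$-tuples of pairwise distinct points. To extend across the discriminant, I would invoke Proposition~\ref{etale_prop}: each point of $C^{(n)}$ admits a $GL_n$-equivariant étale chart $\rep p_s: U_s \to \rep^n_{\BA^1} = \mathfrak{gl}_n$, which lifts compatibly to cotangent bundles and intertwines the moment maps. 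The assertion thereby reduces, étale locally on $C^{(n)}$, to the case $C = \BA^1$; there the moment map is $(X, Y, i, j) \mapsto [X, Y] + i \otimes j$ (Example~\ref{primer}), and the identification $\mu_n^{-1}(0)/\!/GL_n \iso (\BA^2)^{(n)} = (T^*\BA^1)^{(n)}$ is the invariant-theoretic result proved in~\cite{GG}.

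The main technical obstacle will be the étale descent step. Although GIT quotients by reductive groups commute with étale base change pulled back from the quotient, the morphism $\rep p_s$ is étale on the source, and one must verify separately that the induced map $U_s/\!/GL_n \to \mathfrak{gl}_n/\!/GL_n$ is étale. I would handle this via Luna's slice theorem, using that closed $GL_n$-orbits in $\mu_n^{-1}(0)$ decompose as direct sums of pieces each supported at a single point of $C$: this reduces the étale local study at any closed orbit to a product of formal-disk representation schemes, for which (by Example~\ref{point}) the situation is identical to that over $\BA^1$.
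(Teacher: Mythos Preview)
Your proposal is correct and follows essentially the same strategy as the paper's one-line proof, which simply reads: ``Use the factorization and the `local' result for $C=\BA^1$ proved in~\cite{GG}~2.8.'' You have spelled out the details the paper leaves implicit---the $n=1$ base case, the role of Proposition~\ref{etale_prop} in passing to $\BA^1$, and the compatibility of $\varepsilon$ with the structure maps to $C^{(n)}$---and you have correctly flagged the one genuinely nontrivial point (\'etale descent for GIT quotients), proposing a reasonable resolution via Luna slices and the formal-disk identification of~\S\ref{point}.
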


\begin{proof} Use the factorization and the ``local'' result for $C=\BA^1$
proved in~\cite{GG}~2.8.
\end{proof}

\begin{notation}\label{pi}
We let $\pi_n$ denote the natural projection $\mu_n^{-1}(0)\to
\mu_n^{-1}(0)/\!/GL_n\simeq$ $(T^*C)^{(n)}$. If the value of $n$ is clear from
the context, we will simply write $\pi:\ \mu^{-1}(0)\to(T^*C)^{(n)}$.
\end{notation}

\subsection{Flags.}\label{flags}
Fix a smooth curve $C$ and a sheaf $\CF\in \rrep.$ Let
$
\CF_\idot:\ 0=\CF_0 \sset\CF_1\sset\ldots\sset\CF_n=\CF
$
be a complete flag of subsheaves, $\operatorname{length}(\CF_r)=r$.
Thus, for each $r=1,2,\ldots,n$, we have a sheaf imbedding
$i_r: \CF_r\into \CF,$ and also the dual projection
$i_r^\vee: \CF^\vee\onto\CF_r^\vee.$

Thus, for the spaces of global sections of sheaves on $C^2$, we have a diagram
$$
\xymatrix{
\Ga\big(
(\CF_{r-1}\boxtimes\CF_r^\vee)(-\De)\big)
\ar@{^{(}->}[rr]^<>(0.5){i_{r-1}}&&
\Ga\big(
(\CF\boxtimes\CF_r^\vee)(-\De)\big)&&
\Ga\big(
(\CF\boxtimes\CF^\vee)(-\De)\big)
\ar@{->>}[ll]_<>(0.5){i^\vee_r}
}.
$$

\begin{definition}\label{nilflag} (a) Let $\widetilde{\rep_C^n}$ be the
  moduli scheme of pairs $(\CF,\CF_\idot),$ where $\CF\in\rep_C^n$,
  and $\CF_\idot$ is a flag of subsheaves.

(b) Fix $\CF\in \rrep,$ and an  element $y\in \Ga\big(C^2,
(\CF\boxtimes\CF^\vee)(-\De)\big)$. 
A flag $\CF_\idot$
is  said to be a {\em nil-flag} for the pair $(\CF,y)$ if we have
$$i^\vee_r(y)\in i_r(\Ga\big(C^2,
(\CF_{r-1}\boxtimes\CF_r^\vee)(-\De)\big),
\quad\forall r=1,2,\ldots,n.
$$
\end{definition}

We have a natural forgetful morphism $
\widetilde{\rep_C^n}\to\rep_C^n$,
a $GL(V)$-equivariant proper  morphism which
 is an analogue of Grothendieck simultaneous
resolution, cf.~\cite{La}~3.2.

We further extend the above morphism to a map
$\phi:\
\widetilde{\rep_C^n}\times V\to X_n=\rep_C^n\times V,$
identical on the second factor $V$.
For the corresponding cotangent bundles, one obtains a standard  diagram
$$
\xymatrix{
T^*X_n&&\phi^*(T^*X_n)\ar[ll]_<>(0.5){p_1}
\ar@{^{(}->}[rr]^<>(0.5){p_2}&&
T^*(\widetilde{\rep_C^n}\times V),}
$$
where the map $p_2$ is a natural closed imbedding, and
$p_1$ is a proper morphism. Thus,
 $\phi^*(T^*X_n)$ is the smooth variety parametrizing
quintuples $(\CF,\CF_\idot,y,i,j)$. 

Let $Z\sset T^*(\widetilde{\rep_C^n}\times V)$ be a closed 
subscheme  formed by the quintuples $(\CF,\CF_\idot,y,i,j)$ such
that $\CF_\idot$ is a nil-flag for $y$.
Now, the set
$p_1^{-1}(\mu^{-1}(0))$ is clearly a closed algebraic
subvariety in $\phi^*T^*(\rep_C^n\times V)$, and so is
$p_1^{-1}(\mu^{-1}(0))\cap p_2^{-1}(Z)$.
The map $p_1:\ p_1^{-1}(\mu^{-1}(0))\cap p_2^{-1}(Z)\to T^*X_n$
is a proper projection.

Observe next that the zero section embedding $C\hookrightarrow T^*C$
induces
an embedding
$C^{(n)}\hookrightarrow(T^*C)^{(n)}$, and recall the map $\pi$
from Notation \ref{pi}.

\begin{proposition}\label{nil2} For a quadruple 
$(\CF,y,i,j)\in \mu\inv(0)$ the following are equivalent

\pb{The pair $(\CF,y)$ has a nil-flag;}

\pb{$(\CF,y,i,j)$ belongs to the image of the projection
$p_1:\ p_1^{-1}(\mu^{-1}(0))\cap p_2^{-1}(Z)\to T^*X_n$.}

\pb{$\pi(\CF,y,i,j)\in C^{(n)}\subset(T^*C)^{(n)}$.}
\end{proposition}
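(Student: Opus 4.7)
The strategy is to handle the three equivalences in the order (a)~$\Leftrightarrow$~(b), then (a)~$\Rightarrow$~(c), and finally (c)~$\Rightarrow$~(a), which is where the real work lies. For (a)~$\Leftrightarrow$~(b), it suffices to unfold Definition~\ref{nilflag}: a complete flag $\CF_\idot$ is a nil-flag for $(\CF, y)$ precisely when $(\CF, \CF_\idot, y, i, j) \in Z$, and combined with $(\CF, y, i, j) \in \mu^{-1}(0)$ this is exactly the statement that the fiber of $p_1$ restricted to $p_1^{-1}(\mu^{-1}(0)) \cap p_2^{-1}(Z)$ over $(\CF, y, i, j)$ is nonempty.

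For (a)~$\Rightarrow$~(c), the plan is to exploit the fact that $\pi$ factors through the categorical quotient (Lemma~\ref{evid}) and is therefore constant on $GL_n$-orbit closures, its value being determined by the semisimplification of any representative. A nil-flag exhibits $(\CF, y, i, j)$ as an iterated extension of length-one pieces, so its semisimplification is a direct sum $\bigoplus_r \CF_r/\CF_{r-1}$ of $n$ length-one skyscrapers at the support points $c_r \in C$, with $y$ acting as zero on every summand (by the nil-flag condition on $y$). The image under $\pi$ of such a direct sum is, by the construction of $\varepsilon$ in Lemma~\ref{evid}, the unordered $n$-tuple $\{(c_r, 0)\}_r$, which lies in $C^{(n)} \subset (T^*C)^{(n)}$.

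For (c)~$\Rightarrow$~(a), the assertion is Zariski-local on $(T^*C)^{(n)}$, so by the factorization isomorphism~\eqref{fac} applied to the support decomposition of $\CF$, one reduces to the case $\Supp \CF = \{nc\}$ concentrated at a single point $c \in C$. Example~\ref{point} then identifies the formal neighborhood of $\CF$ in $\rep_C^n$ with the completion $\wh\CN$ of the nilpotent cone in $\End V$: $\CF$ corresponds to a nilpotent $x \in \End V$, the cotangent vector $y$ to an element of $\End V$, and the moment-map equation becomes $[x, y] + i \o j = 0$ as in Example~\ref{primer}. Under these identifications condition~(c) translates to ``$y$ is nilpotent'' and the nil-flag condition to the existence of a complete flag $V_\idot$ satisfying $xV_r, yV_r \subset V_{r-1}$ for all $r$.

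The main obstacle is therefore the linear-algebra claim that two nilpotent endomorphisms $x, y \in \End V$ whose commutator has rank at most one admit a joint complete flag on which each acts strictly upper-triangularly. The plan is to proceed by induction on $\dim V$, the inductive step reducing to the existence of a nonzero $v \in \ker x \cap \ker y$: if $y(\ker x) \subset \ker x$ then $y|_{\ker x}$ is nilpotent and any element of its kernel will do, while in the contrary case the rank-one identity $[x, y] = -i \o j$ forces $i \in \im x$, and a symmetric argument applied to $\ker y$ produces enough further constraints to pass to a proper joint-invariant subspace or quotient and apply the inductive hypothesis.
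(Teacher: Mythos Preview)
Your strategy coincides with the paper's: (a)$\Leftrightarrow$(b) is immediate from Definition~\ref{nilflag}, and for (a)$\Leftrightarrow$(c) one reduces via factorization to the local model, where both conditions become ``$y$ is nilpotent'' (Lemma~\ref{line}). The paper then simply cites \cite[Lemma~12.7]{EG} for the nontrivial direction, whereas you attempt to prove it. Two remarks on your reduction: the nil-flag condition translates to $xV_r\subset V_r$ and $yV_r\subset V_{r-1}$, not $xV_r\subset V_{r-1}$---though these are equivalent once $x$ is nilpotent; and your degeneration argument for (a)$\Rightarrow$(c) is shaky, since the naive associated graded of $(\CF,y,i,j)$ with respect to a nil-flag need not lie in $\mu^{-1}(0)$ (the $i,j$ components do not behave well under the obvious one-parameter subgroup). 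It is cleaner to localize first, as you already do for the converse, and observe directly that a nil-flag makes $y$ strictly upper-triangular.

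The genuine gap is in your last paragraph. The case $y(\ker x)\subset\ker x$ is fine, and you correctly deduce $i\in\im x$ otherwise, but ``a symmetric argument applied to $\ker y$ produces enough further constraints to pass to a proper joint-invariant subspace or quotient'' is not a proof. One clean way to finish the induction: for $w\in\ker y$ one has $y(xw)=j(w)\,i$, so either $j|_{\ker y}=0$, in which case $\ker y$ is $x$-stable and any nonzero vector in $\ker(x|_{\ker y})$ lies in $\ker x\cap\ker y$; or some $w\in\ker y$ has $j(w)\neq 0$, whence $i\in\im y$, and then $\im y$ is a proper nonzero subspace stable under both $x$ and $y$ (for $x$-stability: $x(\im y)=xyV\subset yxV+\operatorname{span}(i)\subset\im y$), on which the inductive hypothesis produces the required common kernel vector.
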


This proposition will be proved in the next section.

\subsection{A Lagrangian subvariety.}\label{lagrangian}
We define a {\em nil-cone} $\Nnil(C)\sset T^*X_n$ to be the set of
points satisfying
the equivalent conditions of Proposition \ref{nil2}, with the natural
structure of a reduced closed subscheme of $T^*X_n$ arising from the
third condition of the proposition.

 In the special case of the curve
$C=\BA^1,$ we have
$\rrep=\End V\cong\mathfrak{gl}_n,$ and
the corresponding variety $\Nnil(C)$ is nothing but
the Lagrangian  subvariety  introduced in~\cite{GG}, (1.3).
In more detail, write  $\CN\sset \mathfrak{gl}_n$ for the nilpotent variety.
Then, one has 

\begin{lemma}\label{line} \vi Let
$C=\BA^1$. Then,  we have
\begin{equation}\label{mnil}
\Nnil(\BA^1)=
\{(x,y,i,j)\in \mathfrak{gl}_n\times \mathfrak{gl}_n\times  V\times V^*
\en\big|\en  [x,y] +i\o j=0\en\;\&\;\en 
y\in\CN\}.
\end{equation}

\vii  Similarly, in the case $C=\C^\times$, we have
$$
\Nnil(\C^\times)\simeq\{(x,y,i,j)\in
GL_n\times\mathfrak{gl}_n\times V\times V^*
\mid
xyx^{-1}-y+i\o j=0\en\;\&\;\en  y\in\CN\}.
$$
\end{lemma}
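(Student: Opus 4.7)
The plan is to invoke the third characterization in Proposition~\ref{nil2}: a quadruple $(\CF,y,i,j)\in \mu^{-1}(0)$ belongs to $\Nnil(C)$ if and only if $\pi(\CF,y,i,j)$ lies in the zero-section-embedded copy $C^{(n)}\subset (T^*C)^{(n)}$. For both $C=\BA^1$ and $C=\C^\times$, Examples~\ref{C=C} and~\ref{primer} make the representation scheme and the moment map explicit: in part~(i) $\rep^n_{\BA^1}=\mathfrak{gl}_n$ with $\mu^{-1}(0)=\{[x,y]+i\o j=0\}$, while in part~(ii) $\rep^n_{\C^\times}=GL_n$ with $\mu^{-1}(0)=\{xyx^{-1}-y+i\o j=0\}$. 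In each case I would use a canonical trivialization $T^*C\cong C\times \C$ (the standard one for $\BA^1$, and the one given by the translation-invariant form $dt/t$ for $\C^\times$), under which the zero section $C\hookrightarrow T^*C$ is cut out by the vanishing of the second coordinate; passing to symmetric powers, $C^{(n)}\subset (T^*C)^{(n)}$ is then defined by the elementary symmetric functions in those second coordinates.

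The next step is to pull these symmetric functions back to $\mu^{-1}(0)$ via $\pi$. Recall that the isomorphism $\varepsilon$ of Lemma~\ref{evid} is constructed by iterating the direct sum morphism applied to one-dimensional representations, so it sends an $n$-tuple $((a_r,b_r))_{r=1}^n \in (T^*C)^n$ to the $GL_n$-orbit of $(\mathrm{diag}(a_r),\mathrm{diag}(b_r),(1,\ldots,1),0)\in \mu^{-1}(0)$. On this image locus $y=\mathrm{diag}(b_r)$, so the elementary symmetric functions in the $b_r$'s coincide with those in the eigenvalues of $y$, i.e., with the non-leading coefficients of $\det(tI-y)$. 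Since the direct-sum locus is Zariski-dense in $(T^*C)^{(n)}$ and both sides are $GL_n$-invariant regular functions on $\mu^{-1}(0)$, the identification extends globally. These coefficients vanish simultaneously precisely when $y\in\CN$, which combined with the moment-map descriptions yields (i) and (ii).

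The substantive geometric content is already packaged in Proposition~\ref{nil2} and Lemma~\ref{evid}, namely the identifications $\Nnil(C)=\pi^{-1}(C^{(n)})$ and $\mu^{-1}(0)/\!/GL_n\cong (T^*C)^{(n)}$. Hence the main obstacle is bookkeeping: matching the trivializations of $T^*\BA^1$ and $T^*\C^\times$ with the coordinates appearing in the moment map, confirming that the direct-sum locus is dense, and invoking the classical invariant-theoretic statement that the symmetric functions in the eigenvalues of a matrix vanish exactly when the matrix is nilpotent. I do not anticipate any deeper difficulty.
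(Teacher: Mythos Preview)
Your argument is correct and takes a genuinely different route from the paper. One clarification first: what you call ``the third characterization in Proposition~\ref{nil2}'' is in fact how $\Nnil(C)$ is \emph{defined} (see the sentence immediately preceding Lemma~\ref{line}), so you are really using only the definition together with Lemma~\ref{evid} and Example~\ref{primer}, not Proposition~\ref{nil2} as a theorem. This matters because the paper's proof of Proposition~\ref{nil2} for general $C$ actually \emph{cites} Lemma~\ref{line}; your route avoids any appearance of circularity.

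The paper proceeds instead through the first condition of Proposition~\ref{nil2} (existence of a nil-flag). It observes that for $C=\BA^1$ the map $\phi$ becomes the Grothendieck--Springer resolution, asserts that Proposition~\ref{nil2} is known in that case, and then shows that a quadruple $(x,y,i,j)$ with $[x,y]+i\otimes j=0$ admits a complete flag stable under $x$ and strictly lowered by $y$ if and only if $y$ is nilpotent---citing \cite{EG}, Lemma~12.7, for the nontrivial direction. Part~(ii) is handled ``similarly.'' Your approach sidesteps the flag-existence result entirely: you read off the equations of $C^{(n)}\subset(T^*C)^{(n)}$ in the chosen trivialization of $T^*C$ and pull them back along $\pi$ to recover the coefficients of $\det(tI-y)$. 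This is more self-contained once Lemma~\ref{evid} is in hand. The paper's route, by contrast, makes the nil-flag description explicit, which is what feeds into the general proof of Proposition~\ref{nil2} via factorization and into the Springer-type constructions later on.

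One small point of phrasing: your ``density'' step is really a surjectivity step. The quotient map $(T^*C)^n\to(T^*C)^{(n)}$ is onto, so every point of the symmetric power is represented by some tuple $((a_r,b_r))$; since both the pulled-back symmetric functions in the $b_r$ and the characteristic-polynomial coefficients of $y$ are $GL_n$-invariant regular functions on $\mu^{-1}(0)$, they descend to $(T^*C)^{(n)}$ and may be compared there pointwise via $\varepsilon$. No genuine limiting argument is required.
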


\begin{proof} In the case $C=\BA^1$, 
the morphism $\phi:\
\widetilde{\rep_{\BA^1}^n}\to\rep_{\BA^1}^n=\mathfrak{gl}_n$ 
becomes the
Grothendieck simultaneous resolution, cf.~\cite{La}~(3.2). 
In this case  Proposition \ref{nil2} is known. 
Hence we only have to check that for any quadruple $(x,y,i,j)$ as 
in~\eqref{mnil} there exists a complete flag $V_\idot$ such that $xV_k\subset
V_k$, and $yV_k\subset V_{k-1}$ for any $k=1,\ldots,n$. But this is
also well known, see eg. \cite{EG}, Lemma 12.7. Part (i) follows. 
Part (ii) is proved similarly.
\end{proof}
\begin{proof}[Proof  of Proposition \ref{nil2}]
The first two conditions are clearly equivalent.
The first and third conditions are compatible with factorization, 
and so are reduced
to the ``local'' case $C=\BA^1$, see~Example~\ref{primer}. 
Then they are both equivalent to the nilpotency of $y$,
cf.~Lemma~\ref{line}. 
\end{proof}

\begin{proposition}\label{nil_prop} $\Nnil(C)$ is the reduced scheme
  of a Lagrangian locally complete intersection subscheme in 
$T^*(\rep_C^n\times V)$.
\end{proposition}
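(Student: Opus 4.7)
The plan is to reduce the statement, via étale localization on the curve $C$, to the case $C=\BA^1$, which is already established in \cite{GG}.

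First, I would use the characterization $\Nnil(C)=\pi^{-1}(C^{(n)})$ obtained from Proposition \ref{nil2}(iii), where $C^{(n)}\hookrightarrow(T^*C)^{(n)}$ is the zero-section embedding. This description makes $\Nnil(C)$ manifestly functorial under étale base change: for an étale morphism $p: C_1\to C_2$ of smooth curves, the induced morphism $\rep p$ is étale on the appropriate open subset (Lemma \ref{etal}), and so lifts to an étale morphism of cotangent bundles that preserves the symplectic form and (by functoriality of Hamiltonian $GL_n$-actions) intertwines the moment maps. Under $T^*p$ the zero-section $C_1^{(n)}\hookrightarrow(T^*C_1)^{(n)}$ pulls back from the corresponding zero-section for $C_2$, so $\Nnil(C_1)$ is the pullback of $\Nnil(C_2)$.

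Second, applying Proposition \ref{etale_prop}, I obtain a Zariski open cover $\{U_s\}$ of $\rep_C^n$ equipped with étale maps $\rep p_s: U_s\to\rep_{\BA^1}^n=\mathfrak{gl}_n$. The induced maps on $T^*(U_s\times V)$ are étale and symplectic; moreover, by the discussion above they intertwine the moment maps and carry the nil-cone locus to the nil-cone locus. Consequently, each open subset $\Nnil(C)\cap T^*(U_s\times V)$ is étale-isomorphic to a corresponding open subset of $\Nnil(\BA^1)\subset T^*(\mathfrak{gl}_n\times V)$.

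Third, in the case $C=\BA^1$, Lemma \ref{line}(i) identifies $\Nnil(\BA^1)$ with the explicit variety
$$\{(x,y,i,j)\in\mathfrak{gl}_n\times\mathfrak{gl}_n\times V\times V^*\mid [x,y]+i\o j=0,\ y\in\CN\},$$
cut out inside the $(2n^2+2n)$-dimensional ambient space by the $n^2$ moment-map equations together with the $n$ equations expressing nilpotency of $y$ (vanishing of the nontrivial coefficients of the characteristic polynomial). That this gives a reduced Lagrangian locally complete intersection of the expected dimension $n^2+n$ is precisely the content of \cite{GG}, Theorem~1.2 (see also the discussion around (1.3) in \emph{loc.\,cit.}). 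Since being reduced, being a local complete intersection of the correct codimension, and being Lagrangian with respect to a given symplectic form are all étale-local properties, the result for an arbitrary smooth curve $C$ follows immediately from the case $C=\BA^1$ by the étale cover from the previous step.

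The main obstacle is the bookkeeping in the second step: one must check carefully that the étale isomorphism of representation schemes provided by Lemma \ref{etal}, when combined with the canonical identifications of cotangent spaces from Lemma \ref{tangent}, is genuinely symplectic and sends the full defining data of $\Nnil$ (moment map equation plus nil-flag condition) to its $\BA^1$-counterpart. Given the functorial nature of all constructions in Section \ref{nezabudki}–\ref{flags}, this is essentially formal, but it is the point at which one has to verify that the étale local model passes through the entirety of the cotangent/Hamiltonian reduction setup unchanged.
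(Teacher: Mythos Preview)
Your proposal is correct and follows essentially the same approach as the paper: reduce to the case $C=\BA^1$ via the \'etale cover furnished by Proposition~\ref{etale_prop}, observe that the induced map on cotangent bundles is \'etale, $GL(V)$-equivariant, and intertwines the moment maps so that $\Nnil(C)$ is the preimage of $\Nnil(\BA^1)$, and then invoke \cite{GG}, Theorem~1.2 for the base case. The paper's version is slightly more streamlined in that it simply notes $T^*\CU=\CU\times_X T^*X$ for an \'etale map of the bases, from which the moment-map factorization and the preimage description of $\Nnil$ follow immediately, rather than routing through the characterization $\Nnil(C)=\pi^{-1}(C^{(n)})$.
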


We recall that it
 has been  proved in~\cite{GG},~Theorem~1.2, that the right hand side in
\eqref{mnil} is the reduced scheme of a
lagrangian  complete intersection in
$\mathfrak{gl}_n\times \mathfrak{gl}_n\times  V\times V^*$.

\begin{proof}[Proof of Proposition \ref{nil_prop}]
We are going to reduce the statement of the
proposition to the  above mentioned special case of $C=\BA^1$,
see \eqref{mnil}.

To this end, fix a curve $C'$.
Proposition \ref{etale_prop} implies that
there exists a finite collection of Zariski open subsets $C\sset C'$
and, for each subset $C$, an
\'etale morphism $p:\ C\to\BA^1$ 
and a
Zariski open subset
$\CU\sset \rep^n_C\times V$, such that
the following holds:

\pb{The scheme $\rep_{C'}^n\times V$ is covered by the images 
in $\rep^n_{C'}$ of the
 subsets $\CU$, corresponding to the curves $C$ from our collection;}

\pb{For each $C$, there is an \'etale morphism of curves
  $ p:\ C\to\BA^1$
such that the induced morphism below is also \'etale,}
$$\rep p\times \Id_V:\
\rep_C^n\times V\too X:=\rep_{\BA^1}^n\times V={\mathfrak{gl}}(V)\times V.
$$

Then,  we have $T^*\CU=\CU\times_X T^*X$, hence, the map $\rep p\times \Id_V$
induces a
$GL(V)$-equivariant \'etale morphism $ p_*:\ T^*\CU\to T^*X$. The moment
map $T^*\CU\to\End V$ factors through $T^*\CU\to T^*X\to\End V$, and 
$\Nnil(C)\cap T^*\CU$ is the preimage of $\Nnil(\BA^1)$ in $T^*\CU$.
Thus, $\Nnil(C)\cap T^*\CU$ is the reduced scheme of a lagrangian locally 
complete intersection  in $T^*\CU$.
The proposition follows.
\end{proof}

\subsection{Projectivization.} One can perform the quantum hamiltonian
reduction of Theorem~\ref{sheaf eg} in two steps: first with respect
to the central subgroup  $\BC^\times\subset GL_n$, of scalar matrices, and
then with respect to the subgroup $SL_n\subset GL_n$. 
The subgroup
 $\BC^\times\subset GL_n$ acts trivially on $\rep_C^n$; it 
also acts naturally
on $V=\C^n,$ by dilations. We
put 
$$\Vo:=V\sminus\{0\};\quad \P=\P(V)=\Vo/\BC^\times;\quad \X_n=\rep_C^n\times\P.
$$

We have a natural  Hamiltonian  $\BC^\times$-action on $T^*V$,
a  symplectic manifold, and
Hamiltonian reduction procedure replaces $T^*V$ by $T^*\P$.

Similarly, 
the diagonal  $\BC^\times$-action on $\rep_C^n\times\Vo$
gives rise to a Hamiltonian  $\BC^\times$-action on
$T^*(\rep_C^n\times\Vo),$
with moment map $\mu$. It is clear that the space
$$
T^*\X_n=\mu\inv(0)/\C^\times=\{(x,y,i,j)\in SL_n\times\sv_n\times
\Vo\times V^*
\en|\en \langle j,i\rangle=0\}/\C^\times,
$$
is a  Hamiltonian  reduction of the symplectic manifold
$T^*(\rep_C^n\times\Vo).$

One may also consider
the Hamiltonian reduction of the  Lagrangian subscheme $\Nnil\sset T^*X_n$.
This way one gets a closed Lagrangian subscheme
 ${\mathfrak M}_{\textsf{nil}}(C):=$ $\dis\big(\Nnil(C)\cap\mu\inv(0)\big)\big/\C^\times\sset T^*\X_n$.

Recall that, by Hodge theory, there are natural isomorphisms
$H^2(\P, \Om^{1,2}_\P)\cong H^{1,1}(\P,\C)=\C$.
Therefore, for any
 $\psi\in H^2(C, \Om^{1,2}_C)$ and $c\in\C=H^2(\P, \Om^{1,2}_\P)$,
there is a well defined class $(\psi_n,c)\in
H^2(\X_n, \Om^{1,2}_{\X_n})$,
and the corresponding TDO $\D_{\psi,c}(\X_n)$,  on $\X_n$.

We have a natural algebra isomorphism
\begin{equation}\label{c-red}\D_{\psi,c}(\X_n)\cong
\big(\D_{\psi_n}(X_n)/\D_{\psi_n}(X_n)\cd({\mathsf{eu}}-c)\big)^{\BC^\times},
\end{equation}
where ${\mathsf{eu}}$ is the Euler vector
field
on $V$ that corresponds to the action of the identity matrix
$\Id\in\gl(V).$
The algebra on the right hand side  of \eqref{c-red} is a quantum hamiltonian reduction
 with respect to the group $\C^\times,$ 
at the point $\kappa=c/n$.

\begin{definition}\label{characterDmod}
An $SL(V)$-equivariant twisted $\D_{\psi,c}(\X_n)$-module is called
a {\em character $\D$-module} if its characteristic
variety is contained  in  $\dis
{\mathfrak M}_{\textsf{nil}}(C)=\big(\Nnil(C)\cap\mu\inv(0)\big)\big/\C^\times$.

Let ${\scr C}_{\psi,c}$ denote the (abelian) category of
character  $\D_{\psi,c}(\X_n)$-modules.
\end{definition}

It is clear, by Proposition \ref{nil2},
that any object of the category
${\scr C}_{\psi,c}$ is a holonomic $\D$-module; in particular, such an object
has finite length. 

\begin{remark}
For a general curve $C$ and a general pair
$(\psi,c)\in H^2(C, \Om^{1,2}_C)\times \C,$
the category ${\scr C}_{\psi,c}$ may have no
nonzero objects at all.
It is an interesting open problem, to
analyze which nonzero objects
of the  category ${\scr C}_{0,0}$ admit (at least  formal)
deformation in the direction of some $(\psi,c)\neq(0,0)$.

In the special case of the curve $C=\BC^\times$,
however, we have $H^2(C, \Om^{1,2}_C)=0$. Thus,
there is only one nontrivial
 parameter, $c\in\C$. It turns out that, for any $c\in\C$,
 there is a lot of nonzero
character $\D$-modules on $GL_n\times\P$; moreover, 
all these $\D$-modules have regular singularities.
\hfill$\lozenge$
\end{remark}
% A version of the theory for 
%$SL_n\times\P$ is even more rich. It will be discussed in detail
%in subsequent sections.

\subsection{Hamiltonian reduction functor.} 
We now introduce a version of category
$\CO$ for 
$\sA_{\kappa,\psi}=\e\sH_{\kappa,\psi_n+1}\e$,
the spherical Cherednik algebra
associated with a smooth curve $C$, see \eqref{CHCA}.
\begin{definition}\label{catO}  Let  $\CO(\sA_{\kappa,\psi})$
be the full subcategory of the abelian category of left
$\sA_{\kappa,\psi}$-modules whose objects are 
coherent as $\CO_{C^{(n)}}$-modules.
\end{definition}

For any  $SL(V)$-equivariant
$\D_{\psi,c}(\X_n)$-module $\CF$,
one has the  sheaf-theoretic push-forward
$\supp_\idot\CF$,
a sheaf on $C^{(n)}$, cf. \eqref{supp}.
The latter sheaf comes equipped with
 a natural locally finite (rational) $SL(V)$-action,
and we write
$(\supp_\idot\CF)^{SL(V)}\sset \supp_\idot\CF,$ for the $\CO_{C^{(n)}}$-subsheaf of
$SL(V)$-fixed 
sections. 

Thanks to Theorem \ref{sheaf eg}, one can
apply the general formalism of Hamiltonian reduction,
as outlined in \cite{GG}, \S7, to the spherical Cherednik algebra.
Specifically, we have the following result
\begin{proposition}\label{ham_fun}
\vi One has the following {\em exact} functor
of {\em Hamiltonian reduction}:
$$\BH:\ {\scr C}_{\psi,c} \too \CO(\sA_{\kappa,\psi}),
\quad \CF\mto \BH(\CF)=\CF(\supp_\idot\CF)^{SL(V)},\qquad
\kappa=c/n.
$$
Moreover, this functor induces an equivalence
${\scr
C}_{\psi,c}/\ker\BH\iso\CO(\sA_{\kappa,\psi})$.
\vskip 3pt

\vii The functor $\BH$ has a left adjoint functor
$${}^\top\BH:\
\CO(\sA_{\kappa,\psi})\to
{\scr C}_{\psi,c},\quad
M\mto (\D_{\psi,c}(\X_n)/\D_{\psi,c}(\X_n){\mathfrak g}_\kappa)
\otimes_{\sA_{\kappa,\psi}}\,M.
$$
Moreover, for any $M\in \CO(\sA_{\kappa,\psi})$, the canonical adjunction
$\BH\ccirc{}^\top\BH(M)\to M$ is an isomorphism.
\end{proposition}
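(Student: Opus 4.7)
The plan is to apply the general Hamiltonian reduction formalism of \cite{GG}, §7, using Theorem \ref{sheaf eg} (combined with the two-step reduction \eqref{c-red}) as the essential input. Set $R := \D_{\psi,c}(\X_n)/\D_{\psi,c}(\X_n)\mathfrak{g}_\kappa$; by Theorem \ref{sheaf eg}, $(\supp_\idot R)^{SL(V)} \simeq \sA_{\kappa,\psi}$, so $R$ acquires the structure of a $(\D_{\psi,c}(\X_n),\,\sA_{\kappa,\psi})$-bimodule, which is precisely the data needed to produce the adjoint pair $(\BH,\,{}^\top\BH)$.

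For part (i), exactness of $\BH$ is immediate: $\supp$ is affine (noted just after \eqref{supp}), so $\supp_\idot$ is exact on left $\D$-modules, and taking $SL(V)$-invariants is exact by reductivity. The $\sA_{\kappa,\psi}$-action on $(\supp_\idot\CF)^{SL(V)}$ is supplied by Theorem \ref{sheaf eg}. To see $\BH(\CF)\in\CO(\sA_{\kappa,\psi})$, I would choose a good filtration on $\CF$ compatible with the order filtration on $\D_{\psi,c}(\X_n)$; the associated graded $\gr\CF$ is coherent on $T^*\X_n$ and supported in $\mathfrak{M}_{\mathsf{nil}}(C)$. By the third clause of Proposition \ref{nil2} and Lemma \ref{evid}, the image of $\mathfrak{M}_{\mathsf{nil}}(C)$ under the categorical quotient projection to $(T^*C)^{(n)}$ lies in the zero-section $C^{(n)}\sset(T^*C)^{(n)}$. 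Together with the identification $\gr\sA_{\kappa,\psi}\simeq p_*\CO_{(T^*C)^{(n)}}$ of \eqref{gralg}, this forces $\gr\BH(\CF)$ to be $\CO_{C^{(n)}}$-coherent, hence so is $\BH(\CF)$.

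For part (ii), that ${}^\top\BH(M)=R\otimes_{\sA_{\kappa,\psi}}M$ lies in ${\scr C}_{\psi,c}$ is checked by inducing a good filtration on the tensor product from filtrations on $M$ and $R$: the associated graded is supported in $\mathfrak{M}_{\mathsf{nil}}(C)$, Lagrangian by Proposition \ref{nil_prop}, so ${}^\top\BH(M)$ is a character $\D$-module. The adjunction reduces to the tautological identity
\begin{equation*}
\Hom_{\D_{\psi,c}(\X_n)}\!\bigl(R\otimes_{\sA_{\kappa,\psi}}M,\,\CF\bigr) \;=\; \Hom_{\sA_{\kappa,\psi}}\!\bigl(M,\,\Hom_{\D_{\psi,c}(\X_n)}(R,\CF)\bigr),
\end{equation*}
combined with the observation $\Hom_{\D_{\psi,c}(\X_n)}(R,\CF)=\CF^{\mathfrak{g}_\kappa}$, whose $SL(V)$-invariants coincide with $\BH(\CF)$ because the $SL(V)$-equivariant structure on $\CF$ identifies the action of $\mathfrak{g}_\kappa$ with the derived $\mathfrak{sl}(V)$-action on global sections. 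The counit is computed as
\begin{equation*}
\bigl(\supp_\idot(R\otimes_{\sA_{\kappa,\psi}}M)\bigr)^{SL(V)} \;=\; (\supp_\idot R)^{SL(V)}\otimes_{\sA_{\kappa,\psi}}M \;=\; \sA_{\kappa,\psi}\otimes_{\sA_{\kappa,\psi}}M \;=\; M,
\end{equation*}
where the first equality uses reductivity of $SL(V)$ (invariants commute with $\otimes_{\sA_{\kappa,\psi}}$) together with exactness of $\supp_\idot$, and the second is Theorem \ref{sheaf eg}. The equivalence ${\scr C}_{\psi,c}/\ker\BH\iso\CO(\sA_{\kappa,\psi})$ is then a standard Serre-quotient argument: an exact functor between abelian categories with a left adjoint whose counit is an isomorphism automatically induces such an equivalence. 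I expect the main technical obstacle to be the coherence verification in part (i): one must track singular supports carefully through the quotient $T^*\X_n\to(T^*C)^{(n)}$ to be sure that the image of $\mathfrak{M}_{\mathsf{nil}}(C)$ really collapses to the zero-section, and this relies on Proposition \ref{nil2}(iii) in an essential way.
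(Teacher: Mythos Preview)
Your proposal is correct and follows essentially the same approach as the paper: exactness from affineness of $\supp$ and reductivity of $SL(V)$, the $\sA_{\kappa,\psi}$-action via Theorem~\ref{sheaf eg}, tracking singular supports through a good filtration and the quotient map $\pi$ of Lemma~\ref{evid} (with Proposition~\ref{nil2}(iii) collapsing $\mathfrak{M}_{\mathsf{nil}}(C)$ to the zero section), and the formal Serre-quotient argument for the equivalence. The paper packages the characteristic-variety bookkeeping into the inclusions $SS(\BH(\CF))\subset\pi(SS(\CF))$ and $SS({}^\top\BH(M))\subset\pi^{-1}(SS(M))$ together with Lemma~\ref{zerosec}, and defers the counit isomorphism to \cite{GG},~Proposition~7.6, whereas you compute it directly; both routes are equivalent.
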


To prove Proposition \ref{ham_fun}, first recall  the projection
$T^*C^n\onto (T^*C)^{(n)}.$ Abusing the language we will refer
to the image of the zero section of $T^*C^n$ under the projection as
the `zero section' of $(T^*C)^{(n)}.$

Recall that the spherical algebra  $\sA_{\kappa,\psi}$
has an increasing filtration. Therefore,
given a coherent $\sA_{\kappa,\psi}$-module
$M$, one has a well-defined notion of {\em characteristic variety},
$SS(M)\sset\Spec(\gr^F\sA_{\kappa,\psi}).$
We may (and will) use isomorphism \eqref{gralg} and view
$SS(M)$ as a closed algebraic subset of $(T^*C)^{(n)}$.
Then, the following is clear

\begin{lemma}\label{zerosec} A coherent 
$\sA_{\kappa,\psi}$-module $M$
is an object of $\CO(\sA_{\kappa,\psi})$ if and only if 
$SS(M)$ is contained in the zero section of $(T^*C)^{(n)}.$
\qed
\end{lemma}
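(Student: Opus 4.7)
The plan is to apply the standard good-filtration argument familiar from the theory of $\D$-modules, adapted to the spherical Cherednik algebra by means of the graded algebra isomorphism \eqref{gralg}. Recall that $\sA_{\kappa,\psi}$ carries the order filtration $F_\bullet\sA_{\kappa,\psi}$ with $\gr^F\sA_{\kappa,\psi}\cong p_*\CO_{(T^*C)^{(n)}}$, and that any coherent $\sA_{\kappa,\psi}$-module $M$ admits, locally on $C^{(n)}$, a good filtration $F_\bullet M$ for which $\gr^F M$ is a coherent graded sheaf of $\gr^F\sA_{\kappa,\psi}$-modules; equivalently, a coherent sheaf on $(T^*C)^{(n)}$ whose support is, by definition, $SS(M)$. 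The grading corresponds to the fiberwise scaling $\C^\times$-action on $(T^*C)^{(n)}$, and the zero section $C^{(n)}\hookrightarrow(T^*C)^{(n)}$ is exactly the fixed locus of this action, cut out by the augmentation ideal (the positive-degree part of $\gr^F\sA_{\kappa,\psi}$).

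For the direction $(\Rightarrow)$, assume $M$ is coherent over $\CO_{C^{(n)}}$. I would take the tautological filtration $F_0 M = M$, which is visibly good. Then $\gr^F M = M$ is concentrated in degree zero, which means the $\gr^F\sA_{\kappa,\psi}$-action factors through the quotient by the augmentation ideal, i.e.\ through $\CO_{C^{(n)}}$; equivalently, $SS(M)$ is contained in the zero section of $(T^*C)^{(n)}$.

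For the direction $(\Leftarrow)$, suppose $SS(M)\subseteq C^{(n)}$ and fix, locally, a good filtration. Then $\gr^F M$ is a finitely generated graded $p_*\CO_{(T^*C)^{(n)}}$-module whose support lies in the zero section, so it is annihilated by some power of the augmentation ideal; combined with finite generation in bounded degrees, this forces $\gr^F_i M = 0$ for $i\gg 0$ (locally) and each $\gr^F_i M$ to be coherent over $\CO_{C^{(n)}}$. Consequently $F_N M = M$ for $N\gg 0$, and $M$ becomes an iterated extension of the finitely many coherent $\CO_{C^{(n)}}$-modules $\gr^F_i M$, hence is itself coherent over $\CO_{C^{(n)}}$.

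The only real obstacle is to make precise the statement that a finitely generated graded $\gr^F\sA_{\kappa,\psi}$-module set-theoretically supported on the zero section is in fact annihilated by a power of the augmentation ideal and bounded in degree; this is a standard commutative-algebra fact once one trivializes $\gr^F\sA_{\kappa,\psi}$ étale-locally as the symmetric algebra $\Sym^\bullet T_{C^{(n)}}$ (more precisely, as $p_*\CO_{(T^*C)^{(n)}}$), but it is worth writing out locally using Proposition~\ref{etale_prop} so that the reduction to an affine neighbourhood is unambiguous.
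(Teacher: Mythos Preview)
Your argument is correct and is precisely the standard good-filtration reasoning that the paper has in mind; the paper itself gives no proof at all (the lemma is stated with ``the following is clear'' and an immediate \qed), so you have simply written out the omitted details. There is nothing to compare: your approach \emph{is} the intended one.
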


\begin{proof}[Proof of Proposition  \ref{ham_fun}]
The assignment $\BH: \CF\mto(\supp_\idot\CF)^{SL(V)}$
clearly gives
a functor from the category of  $SL(V)$-equivariant
coherent $\D_{\psi,c}(\X_n)$-modules to the category of  
quasi-coherent $\CO_{C^{(n)}}$-modules. This functor is
{\em exact} since the support-morphism $\supp$ is affine
and $SL(V)$ is a reductive group acting rationally
on $\supp_\idot\CF$.
Thus, our Theorem \ref{sheaf eg} combined with
\cite{GG}, Proposition 7.1, imply that
$(\supp_\idot\CF)^{SL(V)}$ has a natural structure of  coherent
$\sA_{\kappa,\psi}$-module.

We may use  Lemma \ref{evid} and the map $\pi$ introduced
after the lemma, to obtain a diagram
\begin{equation}\label{mdiag}
\Spec\big(\gr \D_{\psi,c}(\X_n)\big)= T^*\X_n
\supset \mu\inv(0)
\stackrel{\pi}\too
(T^*C)^{(n)}=\Spec
\big(\gr\sA_{\kappa,\psi}\big).
\end{equation}

We remark that the statement of  Lemma \ref{evid} involves 
the space $X_n$ rather than $\X_n$; the above
 diagram is obtained from a similar diagram for the subset
 $\rep_C^n\times \Vo\sset X_n,$ by Hamiltonian reduction with respect to
the $\C^\times$-action on $T^*(\rep_C^n\times \Vo)$ induced
by the natural action of the group
$\C^\times\sset GL(V)$ on $\rep_C^n\times \Vo$. Further, from Lemma \ref{evid}
we deduce that the map
$\pi$ in \eqref{mdiag} induces an
isomorphism $\mu\inv(0)/\!/SL(V)\cong(T^*C)^{(n)}$.

Now let $\CF$ be an $SL(V)$-equivariant
coherent $\D_{\psi,c}(\X_n)$-module.
Choose a good increasing filtration on $\CF$
by $SL(V)$-equivariant $\CO_{\X_n}$-coherent subsheaves
and view $\gr\CF$, the associated graded object,
as a coherent sheaf on $T^*\X_n$. It follows
that $SS(\CF)=\Supp(\gr\CF)\sset \mu\inv(0)$.

The functors $\supp_\idot$ and $(-)^{SL(V)},$ each
being exact, we deduce 
$\gr((\supp_\idot\CF)^{SL(V)})\cong(\pi_\idot\gr\CF)^{SL(V)}$.
Moreover, the isomorphism
$(T^*C)^{(n)}\cong\mu\inv(0)/\!/SL(V)$ insures
that $(\pi_\idot\gr\CF)^{SL(V)}$ is a coherent
sheaf on $(T^*C)^{(n)}$. Hence, the filtration
on $\BH(\CF)=(\supp_\idot\CF)^{SL(V)},$ induced by the one on $\CF,$ is
a good filtration, that is,
$\gr\BH(\CF)$ is a coherent
$\gr\sA_{\kappa,\psi}$-module, cf. \eqref{mdiag}. We conclude that
$SS(\BH(\CF))\sset \pi(SS(\CF)).$

The above implies that, for any character $\D$-module $\CF$,
one has 
$$SS(\BH(\CF))\sset\pi(SS(\CF))\sset \pi({\mathfrak M}_{\textsf{nil}}(C))=
\text{zero section of}\en (T^*C)^{(n)}.
$$

The first claim of part (i) of the proposition follows from these
inclusions and Lemma \ref{zerosec}.
Part (ii) is proved similarly, using that,
for any coherent $\sA_{\kappa,\psi}$-module $M$,
one has
\begin{equation}\label{sinv}SS({}^\top\BH(M))\sset \pi\inv(SS(M)).
\end{equation}

At this point, the second 
 claim of part (i) is a general consequence of the
existence of a left adjoint functor,
 cf. \cite{GG}, Proposition 7.6, and we are done.
\end{proof}

\begin{corollary} \label{findimcor}
Let $\CF$ be a simple character $\D$-module such that $\BH(\CF)\neq0.$
 Then, we have
$$\Gamma(C^{(n)}, \BH(\CF))<\infty\quad\Longleftrightarrow\quad
\supp(\Supp\CF)\en{\operatorname{is\en a\en finite\en subset\en of}}\en C^{(n)}.
$$
\end{corollary}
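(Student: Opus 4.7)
The plan is to establish the equality $\Supp\BH(\CF) = \supp(\Supp \CF)$ of subsets of $C^{(n)}$, where the left-hand side is the support of $\BH(\CF)$ as an $\CO_{C^{(n)}}$-coherent sheaf (coherence follows from $\BH(\CF)\in\CO(\sA_{\kappa,\psi})$, see Proposition \ref{ham_fun}). The corollary then reduces to the standard fact that a coherent $\CO_{C^{(n)}}$-module has finite-dimensional space of global sections precisely when its support is zero-dimensional.

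For the inclusion $\Supp\BH(\CF) \sset \supp(\Supp\CF)$, the proof of Proposition \ref{ham_fun} gives $SS(\BH\CF)\sset\pi(SS(\CF))$, while by Lemma \ref{zerosec}, $SS(\BH\CF)$ is contained in the zero section of $(T^*C)^{(n)}$, which the bundle projection $\rho:(T^*C)^{(n)}\to C^{(n)}$ identifies with $\Supp\BH(\CF)$. Diagram \eqref{mdiag} fits into a commutative square $\rho\ccirc\pi = \supp\ccirc p$ on $\mu^{-1}(0)$, where $p:T^*\X_n\to\X_n$ is the bundle projection; indeed, by the description of $\varepsilon$ in Lemma \ref{evid}, a quadruple $(\CF,y,i,j)\in\mu^{-1}(0)$ and its $\pi$-image share the same underlying divisor $\Supp\CF\in C^{(n)}$. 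Applying $\rho$ to the inclusion above yields $\Supp\BH(\CF)=\rho(SS\BH\CF)\sset\rho(\pi(SS\CF))=\supp(p(SS\CF))=\supp(\Supp\CF)$.

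For the reverse inclusion I use the simplicity of $\CF$. The counit ${}^\top\BH\,\BH(\CF)\to\CF$ of the adjunction from Proposition \ref{ham_fun}(ii) is nonzero: otherwise, by the zig-zag identity (equivalently, the isomorphism $\BH\ccirc{}^\top\BH\iso\Id$), applying $\BH$ would force $\Id_{\BH\CF}=0$, contradicting $\BH(\CF)\neq 0$. Since $\CF$ is simple, the counit is then surjective, giving $SS(\CF)\sset SS({}^\top\BH\,\BH\CF) \sset \pi^{-1}(SS(\BH\CF))$ by \eqref{sinv}. Using the surjectivity of $\pi$ together with the same commutative square yields
$$\supp(\Supp\CF)=\rho(\pi(SS\CF))\sset\rho(\pi(\pi^{-1}(SS\BH\CF)))=\rho(SS\BH\CF)=\Supp\BH(\CF),$$
completing the desired equality.

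The main technical point to verify is the commutativity $\rho\ccirc\pi=\supp\ccirc p$ on $\mu^{-1}(0)$; this reduces via the factorization isomorphism \eqref{fac} to the local case $C=\BA^1$ of Example \ref{primer}, where it becomes tautological (the support of a representation of $\C[t]$ on $V$ is read off from the characteristic polynomial of the action of $t$). Modulo this compatibility, everything else is either an application of \eqref{sinv} and Proposition \ref{ham_fun}, or the simple-module observation that a nonzero morphism out of a simple object is surjective.
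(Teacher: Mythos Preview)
Your proof is correct and follows the same strategy as the paper: the inclusion $\Supp\BH(\CF)\sset\supp(\Supp\CF)$ handles one direction, and for the other you use simplicity of $\CF$ to make the adjunction counit ${}^\top\BH\,\BH(\CF)\to\CF$ surjective, then invoke \eqref{sinv}. You go a bit further by establishing the actual equality $\Supp\BH(\CF)=\supp(\Supp\CF)$ and by making the compatibility $\rho\ccirc\pi=\supp\ccirc p$ explicit, whereas the paper leaves the inclusion $\Supp\BH(\CF)\sset\supp(\Supp\CF)$ as ``clear'' and phrases the reverse direction directly in terms of supports rather than characteristic varieties. One small slip in your closing summary: a nonzero morphism \emph{into} a simple object is surjective, not ``out of''; your actual argument in the body has this the right way around.
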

\begin{proof} The space
of global sections of 
a  coherent
$\CO_{C^{(n)}}$-module is  finite dimensional
if and only if  the module has finite support. Further,
we know that $\BH(\CF)$ is a coherent
$\CO_{C^{(n)}}$-module and, moreover,
it is clear that $\Supp\BH(\CF)\sset\supp(\Supp\CF)$.
This gives the implication `$\Leftarrow$'.

To prove the opposite implication, let $M:=\BH(\CF)$.
We have a {\em nonzero} morphism
${}^\top\BH(M)={}^\top\BH\ccirc \BH(\CF)\to\CF$, that
corresponds to $\Id_{\BH(\CF)}$ via the adjunction isomorphism
$$\Hom_{\scr C_{\psi,c}}({}^\top\BH\ccirc \BH(\CF),\CF)
=\Hom_{\CO(\sA_{\kappa,\psi})}(\BH(\CF),\BH(\CF)).
$$

This yields  the following
inclusions
$$\supp(\Supp\CF)\sset\supp(\Supp{}^\top\BH(M))
\sset \supp\big(\supp\inv(\Supp M)\big)=\Supp M=\text{finite set}.
$$
Here, the leftmost inclusion follows since $\CF$ is
simple, hence the map ${}^\top\BH(M)\to\CF$ is surjective,
and  the rightmost equality is due to our assumption that
the sheaf $\BH(\CF)$ has finite support.
The implication `$\Rightarrow$' follows.
\end{proof}

\section{The trigonometric case}

\subsection{Trigonometric Cherednik algebra.}
\label{Trig}
From now on, we consider a
special case of the curve $C=\BC^\times$.
Thus, we have $\rep_{\BC^\times}^n\simeq GL_n$.

Let $H\sset GL_n$ be the maximal torus formed by diagonal matrices, 
and let $\h$ be the Lie algebra of $H$. Let
 $\{\sy_1,\ldots,\sy_n\}$
and $\{x_1,\ldots,x_n\}$ be dual bases  of $\h$ and $\h^*$,
respectively. The coordinate ring 
$\BC[H]$, of  the torus $H$, 
may be identified with
$\BC[\bx_1^{\pm1},\ldots,\bx_n^{\pm1}],$ the Laurent polynomial ring 
in the variables $\bx_i=\exp(x_i)$.
Given an $n$-tuple $\nu=(\nu_1,\ld,\nu_n)\in\Z^n,$
we write $\bx^\nu:=\bx_1^{\nu_1}\cdots\bx_n^{\nu_n}\in
\BC[\bx_1^{\pm1},\ldots,\bx_n^{\pm1}],$
for the corresponding monomial.

Let ${\mathbb T}=SL_n \cap H$ be a maximal torus in $SL_n$,
and let ${\mathfrak t}:=\Lie {\mathbb T}.$
The coordinate ring  $\BC[{\mathbb T}],$ 
of the subtorus ${\mathbb T}\subset H$, is the 
quotient ring of $\BC[H]$ by the relation $\bx_1\cdots\bx_n=1$.
The diagonal Cartan torus $H^0$ of $PGL_n$ is the quotient torus of $H$,
and $\BC[H^0]\subset\BC[H]$ is the subring generated by products
$\bx_i\bx_j^{-1},\ 1\leq i,j\leq n$.

Let $P=\Hom(H,\C^\times)$, resp. $P_0=\Hom({\mathbb T}, \C^\times)$, 
and $P^0=\Hom(H^0, \C^\times)$, be the weight lattice
of the group $GL_n$, resp. of the group $SL_n$, and $PGL_n$.
Thus, $P^0\subset P_0,$ and we put $\Omega:=P_0/P^0\cong
{\mathbb Z}/n{\mathbb Z}$.
We form semi-direct products
$W^e=P\rtimes W$ (an extended affine Weyl group),
resp. $W^a:=P^0\rtimes W$, and $W^e_0:=P_0\rtimes W$.
One has an isomorphism $W^e_0=W^a\rtimes \Omega$.

We fix $\kappa\in\BC$. The {\em trigonometric Cherednik algebra 
$\sH^\trig_\kappa(GL_n)$ of type
$GL_n$} is generated by the subalgebras $\BC[W^e]:=\BC[H]\rtimes\BC[\syn]$ and 
$\Sym(\h)=\BC[\sy_1,\ldots,\sy_n]$ with 
%(redundant) 
relations, see e.g.~\cite{AST},~1.3.7, or~\cite{Su},~\S2: 
\begin{align*}
%\label{GL1}
&s_i\cdot \sy-s_i(\sy)\cdot s_i=
-\kappa\langle x_i-x_{i+1},\sy\rangle,
&
\forall \sy\in\h,\ 1\leq i<n;\\
&[\sy_i,\bx_j]=\kappa\bx_j\cdot s_{ij}, & 1\leq i\ne j\leq n;\\
&[\sy_k,\bx_k]=\bx_k-\kappa\bx_k\cdot\sum_{i\in[1,n]\sminus\{k\}}s_{ik}, &
1\leq k\leq n.
\end{align*}

Recall that $\P:=\P(V)$, where we put $V=\C^n$.
According to~\cite{GG},~(6.13), one has a canonical isomorphism
$$(\D({GL_n}\times V)/\D({GL_n}\times V){\mathfrak g}_\kappa)^{GL_n}\simeq
(\D_{n\kappa}({GL_n}\times\P)/\D_{n\kappa}({GL_n}\times\P)
{\mathfrak g}_{\kappa})^{SL_n},$$
where $\D_{n\kappa}({GL_n}\times\P)=\D(GL_n)\o\D_{n\kappa}(\P)$ stands for the
sheaf of twisted differential operators on
${GL_n}\times\P,$ with a twist $n\cdot\kappa$ along $\P$.

\begin{corollary}
\label{reduc GL_n}
The spherical trigonometric Cherednik subalgebra
$\e\sH_\kappa^\trig(GL_n)\e$ is isomorphic to the quantum Hamiltonian reduction
$(\D_{n\kappa}({GL_n}\times\P)/\D_{n\kappa}({GL_n}\times\P)
{\mathfrak g}_{\kappa})^{SL_n}$.
\end{corollary}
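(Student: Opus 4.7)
The proof plan is to specialize the main Hamiltonian reduction result, Theorem \ref{sheaf eg}, to the affine curve $C=\C^\times$ and then combine it with the isomorphism stated immediately preceding the corollary (from \cite{GG}, (6.13)), which transports the $GL_n$-reduction on $GL_n\times V$ to an $SL_n$-reduction on $GL_n\times\P$.

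First, I would specialize the data in Theorem \ref{sheaf eg} to $C=\C^\times$. Since this curve is affine, $H^2(C,\Omega_C^{1,2})=0$, so the only admissible twist is $\psi=0$, and hence $\psi_n=0$ on $C^{(n)}$. By Example \ref{C=C} one has $\rep_{\C^\times}^n\simeq GL_n$, so $X_n=GL_n\times V$, and the theorem provides a filtered algebra isomorphism
\begin{equation*}
\big(\supp_\idot\D(GL_n\times V)/\supp_\idot\D(GL_n\times V){\mathfrak g}_\kappa\big)^{GL_n}\;\iso\;\e\sH_{\kappa,\,1}\e,
\end{equation*}
where the ``$+1$'' in the twist $\psi_n+1=1$ refers to the first Chern class $c_1(\BCL)$ of the determinant line bundle (this is the shift built into Theorem \ref{sheaf eg}). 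Taking global sections and using that the support morphism $\supp$ is affine replaces sheaves by their global sections, yielding an isomorphism of associative algebras.

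Second, I would identify $\e\sH_{\kappa,\,1}\e$ with the spherical subalgebra $\e\sH_\kappa^\trig(GL_n)\e$ of the trigonometric Cherednik algebra. This step is essentially the content of \cite{Eti}, \S2.19: for $C=\C^\times$ with its natural coordinate $t$, Etingof's Dunkl operators $D_v$ specialize directly to the trigonometric Dunkl operators appearing in the presentation of $\sH_\kappa^\trig(GL_n)$ from \S\ref{Trig}, and the shift by $c_1(\BCL)$ (equivalently, by half the discriminant divisor, see Lemma \ref{koren'}) is precisely the shift that is already absorbed into that presentation. Thus $\sH_{\kappa,\,1}\simeq \sH_\kappa^\trig(GL_n)$ as sheaves of algebras on $C^{(n)}$, and applying $\e(-)\e$ gives the desired identification of spherical subalgebras.

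Third, I would invoke the isomorphism cited just before the corollary,
\begin{equation*}
\big(\D(GL_n\times V)/\D(GL_n\times V){\mathfrak g}_\kappa\big)^{GL_n}\;\simeq\;\big(\D_{n\kappa}(GL_n\times\P)/\D_{n\kappa}(GL_n\times\P){\mathfrak g}_\kappa\big)^{SL_n},
\end{equation*}
which is obtained by performing reduction in two stages (first by the scalar subgroup $\C^\times\subset GL_n$ acting on $V$, producing the twisted D-module sheaf $\D_{n\kappa}(GL_n\times\P)$ as in formula \eqref{c-red}, then by $SL_n$); this is compatible with the normalization $\kappa=c/n$ fixed in \eqref{norm}. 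Composing the three isomorphisms above yields the corollary.

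The substantive point, and the main thing to check carefully, is the second step: verifying that for $C=\C^\times$ the global Cherednik algebra of Etingof with twist $\psi_n+1$ coincides on the nose with the trigonometric algebra $\sH_\kappa^\trig(GL_n)$ as presented in \S\ref{Trig}. Steps one and three are direct citations, but step two requires tracing through Etingof's Dunkl operator formula and matching Chern class conventions; once that is done, the corollary follows by composition.
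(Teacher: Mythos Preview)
Your proposal is correct and follows essentially the same route as the paper: specialize Theorem \ref{sheaf eg} to $C=\C^\times$ (where $\psi=0$ and everything is affine), identify the resulting global Cherednik algebra with $\sH_\kappa^\trig(GL_n)$, and then use the two-stage reduction isomorphism from \cite{GG},~(6.13). The only difference is in how step two is carried out: rather than citing \cite{Eti} for the identification, the paper writes down the explicit Dunkl--Cherednik operator $T_\sy^\kappa$ on $\BC[H]$, defines $A(GL_n)\subset\End(\BC[H])$ as the algebra generated by $\syn$, $\BC[H]$, and the $T_\sy^\kappa$, and then invokes Cherednik's theorem (as exposed in \cite{Op},~3.7) that $\sH_\kappa^\trig(GL_n)\iso A(GL_n)$; this makes the matching of Dunkl operators concrete rather than leaving it as a citation to be unwound.
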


\begin{proof}
%\label{t'ma}
Recall that $\rho:=\frac{1}{2}\sum_{\alpha\in R^+}\alpha\in\h^*$.
For $\sy\in\h$ let $\partial_\sy$  denote the corresponding 
translation invariant vector
field on $H$. 
%If $\sy$ lies in $\bt\subset\h$, we keep the name $\partial_\sy$
%for the corresponding ivariant vector fields on $H_0$ and $H^0$.

Introduce
the {\em Dunkl-Cherednik operator} $T_\sy^\kappa$ as an endomorphism of
$\BC[H]$ defined as follows:
\begin{equation}
\label{dc}
T_\sy^\kappa:=\partial_\sy-\kappa\langle\rho,\sy\rangle+
\kappa\sum_{1\leq i<j\leq n}
\frac{\langle x_i-x_j,\sy\rangle}{1-\bx_i^{-1}\bx_j}(1-s_{ij})
\end{equation}

%If $\sy$ lies in $\bt\subset\h$, we keep the same name $T_\sy^\kappa$
%for the corresponding endomorphisms of $\BC[H_0]$ and of $\BC[H^0]$.

Let $A(GL_n)$ be a subalgebra of $\End(\BC[H])$ generated by
the operators corresponding to the action of elements $w\in\syn$,
by the commutative algebra  $\BC[H]$, of multiplication operators,
 and by all the operators $T_\sy^\kappa,\
\sy\in\h$.

The following key result is due to Cherednik; for a nice exposition
see e.g.~\cite{Op},~3.7:
\vskip 3pt

\noindent
{\em The  assignment}
 $$ \bx^\nu w\mapsto\exp(\nu)w,\quad
\h\ni \sy\mapsto T_\sy^\kappa,
\qquad\nu\in\Z^n,\,w\in\syn,\,\sy\in\h,$$
{\em extends to an algebra
isomorphism $\sH^\trig_\kappa(GL_n)\iso A(GL_n)$.}

 To complete the proof, observe that,
the curve $(\BC^\times)^{(n)}$ being affine, one can replace
the sheaf of Cherednik algebras
by the corresponding  algebra of  global sections.
 Moreover, since
$H^2(\BC^\times,\Omega^{1,2}_{\BC^\times})=0$,
the parameter $\psi$ in $\sH_{\kappa,\psi}$ vanishes.
 Thus, Theorem~\ref{sheaf eg} says
that
the Hamiltonian reduction  algebra
$(\D_{n\kappa}({GL_n}\times\P)/\D_{n\kappa}({GL_n}\times\P)
{\mathfrak g}_{\kappa})^{SL_n}$, is isomorphic to
the algebra $A(GL_n)$.
\end{proof}

\subsection{} 
The {\em trigonometric Cherednik algebra $\sH^\trig_\kappa(PGL_n)$
of $PGL_n$-type} is defined as a subalgebra in $\sH^\trig_\kappa(GL_n)$ generated
by $\BC[W^a]$ and $\Sym(\bt)$. Equivalently, $\sH^\trig_\kappa(PGL_n)$
is generated by the subalgebras $\BC[W^a]$ and $\Sym(\bt)$ with
relations
\begin{equation}
\label{PGL1}
s_i\cdot \sy-s_i(\sy)\cdot s_i=
-\kappa\langle x_i-x_{i+1},\sy\rangle\
\forall \sy\in\bt,\ 1\leq i<n
\end{equation}
\begin{equation}
\label{PGL2}
[\sy,\bx]=\langle\eta,\sy\rangle\bx
-\kappa\sum_{1\leq i<j\leq n}\langle\alpha_{ij},\sy\rangle
\frac{\bx-s_{ij}(\bx)}{1-\bx_i^{-1}\bx_j}s_{ij},\
\forall \sy\in\bt,\ \bx=\exp(\eta),\ \eta\in P^0\subset\BC[W^a]
\end{equation}

The {\em trigonometric Cherednik algebra $\sH^\trig_\kappa(SL_n)$
of type $SL_n$} 
is generated by the subalgebras $\BC[W^e_0]$ and $\Sym(\bt)$ with
relations
\begin{equation}
\label{SL1}
s_i\cdot \sy-s_i(\sy)\cdot s_i=
-\kappa\langle x_i-x_{i+1},\sy\rangle\
\forall \sy\in\bt,\ 1\leq i<n
\end{equation}
\begin{equation}
\label{SL2}
\omega\cdot \sy=\omega(\sy)\cdot\omega\
\forall \sy\in\bt,\ \omega\in\Omega
\end{equation}
\begin{equation}
\label{SL3}
[\sy,\bx]=\langle\eta,\sy\rangle\bx
-\kappa\sum_{1\leq i<j\leq n}\langle\alpha_{ij},\sy\rangle
\frac{\bx-s_{ij}(\bx)}{1-\bx_i^{-1}\bx_j}s_{ij},\
\forall \sy\in\bt,\ \bx=\exp(\eta),\ \eta\in P_0\subset\BC[W^e_0]
\end{equation}
For an equivalent definition see e.g. ~\cite{Op},~3.6.

\bigskip

The algebras $\sH^\trig_\kappa(SL_n),\ \sH^\trig_\kappa(PGL_n),\
\sH^\trig_\kappa(GL_n)$ are closely related to each other.
To formulate the relations more precisely, let  $\D(\BC^\times)$ be
the algebra of differential operators  on $\BC^\times$. 
Let $\fx$ be a coordinate on $\BC^\times$, and $\fy=\fx\partial_\fx$.
Then $\D(\BC^\times)$ is generated by $\fx^{\pm1},\fy$ with the relation
$[\fy,\fx]=\fx$. 

We have an embedding $\BC^\times\hookrightarrow H$ as the scalar (central) 
matrices in $GL_n$. Taking product with the embedding 
$\BT\hookrightarrow H$, we obtain a morphism ($n$-fold covering)
$\BT\times\BC^\times\twoheadrightarrow H$. At the level of functions we
have an embedding $\BC[H]\hookrightarrow\BC[\BT]\otimes\BC[\BC^\times]$.
In coordinates, we have $\bx_i\mapsto\bx_i\otimes\bx_i,\ 1\leq i\leq n$.
The isogeny $\BT\times\BC^\times\twoheadrightarrow H$ induces an isomorphism
of the Lie algebras of our tori. The inverse isomorphism
$\h\to\bt\oplus\BC$ sends $\sy_i\in\h$ to
$(\sy_i-\frac{1}{n}\sum_{k=1}^n\sy_k,\ \frac{1}{n}\fy)$ (recall from the
previous paragraph that $\BC=\operatorname{Lie}\BC^\times$ is equipped with
the base $\{\fy\}$).

We consider the tensor product algebra
$\sH^\trig_\kappa(SL_n)\otimes\D(\BC^\times)$ together with the following 
elements:
$$\Xi(\bx_i):=\bx_i\otimes\fx;\quad
\Xi(\sy_i):=(\sy_i-\frac{1}{n}\sum_{k=1}^n\sy_k)\otimes1+
1\otimes\frac{1}{n}\fy;\quad
1\leq i\leq n.$$

The following is a straightforward corollary of definitions.

\begin{corollary}
\label{sledstvie}
\vi There is a natural isomorphism
$\sH^\trig_\kappa(SL_n)\simeq\sH^\trig_\kappa(PGL_n)\rtimes\Omega$.

\vii The map $\bx_i\mapsto\Xi(\bx_i),\ \sy_i\mapsto\Xi(\sy_i),\
s_{ij}\mapsto s_{ij};\ 1\leq i\ne j\leq n$, defines an injective
homomorphism $\Xi:\ \sH^\trig_\kappa(GL_n)\hookrightarrow
\sH^\trig_\kappa(SL_n)\otimes\D(\BC^\times)$.\qed
\end{corollary}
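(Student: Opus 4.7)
For part (i), the plan is to translate directly the decomposition $W^e_0 = W^a \rtimes \Omega$ (recorded in the text above) into a semidirect decomposition of Cherednik algebras. The subalgebra of $\sH^\trig_\kappa(SL_n)$ generated by $\BC[W^a] \subset \BC[W^e_0]$ and $\Sym(\bt)$ satisfies exactly the relations (PGL1) (which coincides with (SL1)) and (PGL2) (which is (SL3) restricted to $\eta \in P^0 \subset P_0$), so it is a homomorphic image of $\sH^\trig_\kappa(PGL_n)$. The additional relation (SL2) combined with the conjugation structure of $W^e_0 = W^a \rtimes \Omega$ states precisely that $\Omega$ acts on $\sH^\trig_\kappa(PGL_n)$ by algebra automorphisms, so the universal property of the smash product produces a surjection $\sH^\trig_\kappa(PGL_n) \rtimes \Omega \twoheadrightarrow \sH^\trig_\kappa(SL_n)$. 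Bijectivity then follows by comparing PBW-type bases on both sides, noting that (SL3) for general $\eta \in P_0$ is forced by (PGL2) and (SL2) via the coset decomposition $P_0 = \bigsqcup_{\omega\in\Omega}(\eta_\omega + P^0)$.

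For part (ii), the plan is to check that $\Xi$ respects each of the defining relations of $\sH^\trig_\kappa(GL_n)$ listed in \S\ref{Trig}, and then deduce injectivity from a PBW argument. The relation $s_i\sy - s_i(\sy)s_i = -\kappa\langle x_i - x_{i+1}, \sy\rangle$ for $\sy \in \h$ lifts from (SL1) applied to $\sy - \frac{1}{n}\sum_l\sy_l \in \bt$, because the trace $\sum_l\sy_l$ is $W$-invariant and pairs trivially with every root $x_i - x_{i+1}$, while $s_i \otimes 1$ commutes with $1\otimes\fy$. The main calculation is the commutator $[\Xi(\sy_i),\Xi(\bx_j)]$, which splits as $[\sy_i'\otimes 1,\bx_j\otimes\fx] + \frac{1}{n}[1\otimes\fy,\bx_j\otimes\fx]$, where $\sy_i' := \sy_i - \frac{1}{n}\sum_l\sy_l \in \bt$. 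The first summand is evaluated by (SL3), and the second by the defining relation $[\fy,\fx] = \fx$ of $\D(\BC^\times)$. The scalar contributions $\langle x_j, \sy_i'\rangle\bx_j\otimes\fx = (\delta_{ij} - \tfrac{1}{n})\bx_j\otimes\fx$ and $\tfrac{1}{n}\bx_j\otimes\fx$ combine to $\delta_{ij}\bx_j\otimes\fx$, reproducing the scalar term of the $GL_n$-relation, and the Dunkl reflection-part of (SL3) supplies the transposition-valued terms $\kappa\bx_j s_{ij}$ (respectively $-\kappa\bx_k \sum_{i \ne k}s_{ik}$ in the diagonal case), via the identity $\tfrac{\bx_j - \bx_i}{1 - \bx_i^{-1}\bx_j} = -\bx_i$.

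Injectivity of $\Xi$ is then a routine consequence of PBW: the isogeny $\BT \times \BC^\times \twoheadrightarrow H$ and the direct-sum decomposition $\h = \bt \oplus \BC\fy$ (under which $\sy_i$ corresponds to $(\sy_i - \tfrac{1}{n}\sum_l\sy_l,\tfrac{1}{n}\fy)$) exhibit $\Xi$ as the PBW-compatible inclusion of generators, and the induced map on associated graded spaces is the corresponding inclusion of polynomial algebras, which is manifestly injective. The only genuine technical content in the whole argument is the bookkeeping of signs and normal ordering in the Dunkl part of (SL3), which is where I would expect the main (and only) source of friction; once that identity is carried out, the rest is a formal match of generators and relations.
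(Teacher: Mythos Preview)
The paper offers no proof here at all: the sentence ``The following is a straightforward corollary of definitions'' is the entire justification, and the $\qed$ symbol ends the statement. Your proposal is precisely the expanded verification that the paper suppresses --- matching the $W^e_0=W^a\rtimes\Omega$ decomposition against the presentations (SL1)--(SL3) versus (PGL1)--(PGL2) for part (i), and checking the $GL_n$ relations under $\Xi$ together with a PBW comparison for part (ii) --- so your approach and the paper's (implicit) approach coincide.

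One small bookkeeping remark on part (ii): the identity you quote, $\tfrac{\bx_j-\bx_i}{1-\bx_i^{-1}\bx_j}=-\bx_i$, produces a reflection term $\kappa\,\bx_i\,s_{ij}$ rather than the $\kappa\,\bx_j\,s_{ij}$ you record; depending on whether $i<j$ or $i>j$ one picks up $\bx_{\min(i,j)}$ from the (SL3) sum, so the match with the stated $GL_n$ relations requires a little more care with the ordering of indices (or with the precise form of the $GL_n$ relation, which varies across references). This is exactly the ``friction in signs and normal ordering'' you anticipated, not a structural gap.
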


%Finally, we recall the following theorem, due to T.~Suzuki 
%(see ~\cite{Su},~3.1), about the relation between 
%$\sH^\trig_\kappa(GL_n)$ and the rational Cherednik algebra
%$\sH_\kappa=\sH_\kappa^{\operatorname{rat}}=
%\sH_\kappa^{\operatorname{rat}}(GL_n)$.
%Recall that $\sH_\kappa$ is generated by $\{x_i,y_i,s_{ij},\ 
%1\leq i\ne j\leq n\}$.

%\begin{theorem}
%\label{suzuki}
%There exists a unique algebra homomorphism 
%$\iota:\ \sH_\kappa\to\sH^\trig_\kappa(GL_n)$ such that
%$\iota(s_{ij})=s_{ij},\ \iota(x_i)=\bx_i,\
%\iota(y_i)=\bx_i^{-1}(\sy_i+\kappa\sum_{1\leq k<i}s_{ki});\
%1\leq i\ne j\leq n$.
%Moreover, $\iota$ is injective and extends to the isomorphism
%$\BC[x_1^{\pm1},\ldots,x_n^{\pm1}]\otimes_{\BC[x_1,\ldots,x_n]}\sH_\kappa
%\simeq\sH^\trig_\kappa(GL_n)$.
%\end{theorem}

\begin{corollary}\label{morita_cor} For any $c\in \C\sminus [-1,0)$,
  the functor below is a Morita equivalence
$$\Lmod{\sH^\trig_\kappa(SL_n)}\map
\Lmod{\e\sH^\trig_\kappa(SL_n)\e},\en
M\mto \e M.$$
\end{corollary}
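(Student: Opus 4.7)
The plan is to bootstrap the asserted Morita equivalence from its analogue for $\sH^\trig_\kappa(GL_n)$, which is Proposition~\ref{morita} applied to the curve $C=\BC^\times$. As in the proof of Proposition~\ref{morita}, the statement is equivalent to the equality
$\sH^\trig_\kappa(SL_n)\cd\e\cd\sH^\trig_\kappa(SL_n)=\sH^\trig_\kappa(SL_n)$,
and in turn (taking $M=\sH^\trig_\kappa(SL_n)/\sH^\trig_\kappa(SL_n)\e\sH^\trig_\kappa(SL_n)$) to the vanishing of every left $\sH^\trig_\kappa(SL_n)$-module $M$ that satisfies $\e M=0$. So I would assume for contradiction that such a nonzero $M$ exists, and produce out of it a nonzero $\sH^\trig_\kappa(GL_n)$-module annihilated by $\e$, contradicting the Morita equivalence that already holds on the $GL_n$ side.

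The transfer will use the injective homomorphism
$\Xi:\sH^\trig_\kappa(GL_n)\hookrightarrow\sH^\trig_\kappa(SL_n)\otimes\D(\BC^\times)$
of Corollary~\ref{sledstvie}(ii). Namely, I would set
$\wt M:=M\otimes_{\C}\C[\fx^{\pm1}]$,
view it in the evident way as a module over $\sH^\trig_\kappa(SL_n)\otimes\D(\BC^\times)$, and then via $\Xi$ as a module over $\sH^\trig_\kappa(GL_n)$. Because the formula of Corollary~\ref{sledstvie}(ii) yields $\Xi(s_{ij})=s_{ij}\otimes 1$ for all transpositions, the symmetrizer idempotent $\e\in\C[\syn]\sset\sH^\trig_\kappa(GL_n)$ is carried to $\e\otimes 1$, so that
$\e\cd\wt M = (\e M)\otimes\C[\fx^{\pm1}]=0$,
while $\wt M$ is nonzero whenever $M$ is.

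To close the argument I would apply Proposition~\ref{morita} to $C=\BC^\times$: since $H^2(\BC^\times,\Omega^{1,2}_{\BC^\times})=0$ forces $\psi=0$, and the global Cherednik algebra in this case coincides with $\sH^\trig_\kappa(GL_n)$ (as recorded just before Corollary~\ref{reduc GL_n}), the proposition gives the Morita equivalence for $\sH^\trig_\kappa(GL_n)$. In particular every $\sH^\trig_\kappa(GL_n)$-module annihilated by $\e$ must vanish, contradicting $\wt M\neq 0$. Thus $M=0$, which is what was needed.

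The main subtlety, and the place I expect to have to be careful, is the matching of the numerical condition on the parameter. Proposition~\ref{morita} is stated for $\kappa\in\C\sminus[-1,0)$, whereas the corollary's hypothesis is written in terms of~$c$; under the normalization $\kappa=c/n$ of~\eqref{norm}, the former condition corresponds only to $c\in\C\sminus[-n,0)$, which is weaker than $c\in\C\sminus[-1,0)$. Either the letter $c$ in the corollary is to be read as $\kappa$, or one must invoke a sharper form of the Bezrukavnikov--Etingof / Gordon--Stafford input underlying Proposition~\ref{morita} in order to obtain the stronger range.
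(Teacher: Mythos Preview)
Your proof is correct and uses the same ingredients as the paper's: Proposition~\ref{morita} for $C=\BC^\times$ to get the $GL_n$ Morita equivalence, and the embedding $\Xi$ of Corollary~\ref{sledstvie}(ii) together with $\Xi(\e)=\e\otimes1$ to transfer it to $SL_n$. The only difference is packaging---the paper works with the identity $1=\sum_k u_k\,\e\,v_k$ in $\sH^\trig_\kappa(GL_n)$, pushes it through $\Xi$, and extracts coefficients along a basis of $\D(\BC^\times)$ to obtain $1=\sum_\ell a_\ell\,\e\,b_\ell$ in $\sH^\trig_\kappa(SL_n)$, whereas your module-theoretic reformulation avoids that extraction step; the content is identical. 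Your remark on the parameter is also well taken: the paper's own proof simply invokes Proposition~\ref{morita} without further comment, so the intended hypothesis is indeed $\kappa\in\C\sminus[-1,0)$.
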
 
\begin{proof} 
We need to prove that $\sH^\trig_\kappa(SL_n)=
\sH^\trig_\kappa(SL_n)\cd\e\cd\sH^\trig_\kappa(SL_n)$.
The similar statement for the trigonometric Cherednik algebra
$\sH^\trig(GL_n)$ follows from Proposition~\ref{morita} 
(for $C=\BC^\times$).
%First of all, an analogue of the statement of the
%Corrolary for the {\em rational} Cherednik algebra
%$\sH_\kappa$ instead of trigonometric  Cherednik algebras
%has been established by Gordon-Stafford, see \cite{GS}, Theorem 3.3.
Therefore, we have an equality
\begin{equation}\label{1}
1=\sum_{k=1}^r u_k\cd\e\cd v_k,\quad u_1,\ldots,u_r,v_1,\ldots,v_r\in
\sH^\trig_\kappa(GL_n).
\end{equation}
This implies, via the  imbedding
$\sH^\trig_\kappa(GL_n)\into
\sH^\trig_\kappa(SL_n)\otimes\D(\BC^\times),$ of
%Theorem \ref{suzuki} and 
Corollary \ref{sledstvie}(ii),
that a similar equality holds in the algebra
$\sH^\trig_\kappa(SL_n)\otimes\D(\BC^\times)$.
Thus, in  $\sH^\trig_\kappa(SL_n)\otimes\D(\BC^\times)$ we have 
\begin{equation}\label{11}
1\o 1=\sum_{k=1}^r (u_k\cd\e\cd v_k)\o w_k,\quad
u_k,v_k\in \sH^\trig_\kappa(SL_n),\,
w_k\in \D(\BC^\times).
\end{equation}

 Choose a $\C$-linear countable basis
$\{q_\nu\}_{\nu\in \mathbb N}$ 
of the vector space $\D(\BC^\times)$ such that $q_1=1$.
Expanding each of the elements $w_k\in \D(\BC^\times)$
in this basis and equating the corresponding terms in \eqref{11}
one deduces from \eqref{11} an equation of the form
$1=\sum_\ell a_\ell\cdot\e\cdot b_\ell$, where
$a_\ell,b_\ell\in \sH^\trig_\kappa(SL_n).$
Thus, we have shown that 
$$\sH^\trig_\kappa(SL_n)=
\sH^\trig_\kappa(SL_n)\cd\e\cd\sH^\trig_\kappa(SL_n),
$$
and the Morita equivalence for the algebra $\sH^\trig_\kappa(SL_n)$
follows.
\end{proof}

\subsection{}
We have the $\syn$-equivariant product morphism $H=(\BC^\times)^n\to\BC^\times$
with the kernel $\BT\subset H$. We can consider the restriction of the sheaf of Cherednik
algebras $\sH_{\kappa}$ to the closed subvariety $\BT/\syn\sset H/\syn$.
Abusing the notation, we also write  $\sH_{\kappa}$ for the
corresponding algebra of global sections.

The proof of the following result copies the proof
of Proposition \ref{reduc GL_n}.

\begin{corollary}
\label{reduc SL_n}
The spherical trigonometric Cherednik subalgebra
$\e\sH_\kappa^\trig(SL_n)\e$ is isomorphic to the quantum Hamiltonian reduction
$(\D_{n\kappa}(SL_n\times\P)/\D_{n\kappa}
(SL_n\times\P){\mathfrak g}_{\kappa})^{SL_n}$.
\end{corollary}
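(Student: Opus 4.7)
The plan is to mimic the proof of Corollary \ref{reduc GL_n} with two modifications: (i) replace the Dunkl-Cherednik realization of $\sH_\kappa^\trig(GL_n)$ on $\BC[H]$ by its $SL_n$-analogue on $\BC[\BT]$, and (ii) restrict the sheaf isomorphism supplied by Theorem \ref{sheaf eg} from the base $H/\syn=(\BC^\times)^{(n)}$ to the closed affine subvariety $\BT/\syn$, cut out by the $\syn$-equivariant product morphism $H\to\BC^\times$. The geometric fact that makes this restriction yield the desired statement is that, for $C=\BC^\times$, the support morphism $\supp\colon\rep^n_{\BC^\times}=GL_n\to H/\syn$ sends a matrix to its unordered multiset of eigenvalues, so that $\supp^{-1}(\BT/\syn)=SL_n$ scheme-theoretically.

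First, I would recall the $SL_n$-analogue (cf.~\cite{Op},~3.6), due to Cherednik, of the algebra isomorphism $\sH_\kappa^\trig(GL_n)\iso A(GL_n)$ used in the proof of Corollary \ref{reduc GL_n}: there is an isomorphism $\sH_\kappa^\trig(SL_n)\iso A(SL_n)\sset\End(\BC[\BT])$, where $A(SL_n)$ is the subalgebra generated by multiplication by elements of $\BC[\BT]$, the extended Weyl group $W^e_0$, and the Dunkl-Cherednik operators $T_\sy^\kappa$ for $\sy\in\bt$, under the assignments $\bx^\nu w\mapsto\exp(\nu)w$, $\sy\mapsto T_\sy^\kappa$, $\omega\mapsto\omega|_{\BC[\BT]}$. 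This presentation can alternatively be deduced from the $GL_n$-case via Corollary \ref{sledstvie}(ii) by factoring off the $\D(\BC^\times)$-tensor factor, much as in the proof of Corollary \ref{morita_cor}.

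Next, I would apply Theorem \ref{sheaf eg} with $C=\BC^\times$ (so that $\psi=0$), combined with the projective reduction step \eqref{c-red}, to obtain a sheaf isomorphism
\[
\bigl(\supp_\idot\D_{n\kappa}(GL_n\times\P)\big/\supp_\idot\D_{n\kappa}(GL_n\times\P)\cd{\mathfrak g}_\kappa\bigr)^{SL_n}\iso\e\sH_\kappa\e
\]
of filtered algebras on $H/\syn$. I would then pass to global sections of the restrictions of both sides to $\BT/\syn$. On the right, this yields (by the discussion preceding the corollary together with the first step) the algebra $\e\sH_\kappa^\trig(SL_n)\e$. On the left, the preimage identity $\supp^{-1}(\BT/\syn)=SL_n$, combined with the exactness of $\supp_\idot$ (since $\supp$ is affine) and of the functor of $SL_n$-invariants (since $SL_n$ is reductive acting rationally), shows that the restriction computes $(\D_{n\kappa}(SL_n\times\P)/\D_{n\kappa}(SL_n\times\P)\cd{\mathfrak g}_\kappa)^{SL_n}$, as required.

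The main technical point I expect to be the obstacle is the compatibility of quantum Hamiltonian reduction with base-change to $\BT/\syn$: that restricting the left-hand sheaf above to $\BT/\syn$ genuinely recovers the Hamiltonian reduction associated to $SL_n\times\P$. This reduces to the two exactness properties just noted together with the scheme-theoretic equality $\supp^{-1}(\BT/\syn)=SL_n$, which identifies the restricted support morphism with the eigenvalue map $SL_n\to\BT/\syn$ and ensures that the twisted differential operators pull back correctly along this closed embedding of smooth subvarieties. Once this compatibility is verified, comparing the two sides and passing to spherical subalgebras via the idempotent $\e$ yields the stated isomorphism.
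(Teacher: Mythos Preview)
Your proposal is correct and takes essentially the same approach as the paper, which simply states that the proof ``copies the proof of'' Corollary~\ref{reduc GL_n} (after setting up, in the paragraph preceding the statement, the restriction of the sheaf $\sH_\kappa$ to $\BT/\syn\subset H/\syn$). Your expansion of the argument---the Dunkl--Cherednik realization on $\BC[\BT]$, the base-change step via the scheme-theoretic identity $\supp^{-1}(\BT/\syn)=SL_n$, and the exactness of $\supp_\idot$ and of $SL_n$-invariants---is exactly what that terse remark intends.
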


An important difference between the algebras
$\e\sH_\kappa^\trig(GL_n)\e$ and $\e\sH_\kappa^\trig(SL_n)\e$
is that the latter may have finite dimensional representations,
while the former cannot have such representations.
In view of this, it is desirable to have a version of Hamiltonian
reduction functor that would relate $\D$-modules
on $SL_n\times \P$ (rather than
on $GL_n\times\P$) with $\e\sH_\kappa^\trig(SL_n)\e$-modules.
To this end, one needs  first to introduce a
Lagrangian nil-cone in $T^*(SL_n\times \P),$ and the corresponding
 notion of character $\D$-module
on  $SL_n\times \P$.

To define the nil-cone,
we intersect   $\Nnil(\BC^\times)\sset
GL_n\times\mathfrak{gl}_n\times\Vo\times
V^*=T^*(\rep_{\BC^\times}^n\times\Vo)$,
the nil-cone for the group $GL_n$, with
$SL_n\times\mathfrak{sl}_n\times\Vo\times V^*.$
Let $\Nnil\sset T^*(SL_n\times\P)$ 
be the Hamiltonian reduction
of the resulting variety with respect to the Hamiltonian
$\C^\times$-action on the factor $T^*\Vo=\Vo\times V^*.$

We write $\D_c:=\D(SL_n)\o \D_c(\P).$
An $SL_n$-equivariant $\D_c$-module is called character
$\D$-module provided its characteristic variety is contained in
$\Nnil$.

\subsection{A Springer type construction} We are going to introduce
certain analogues of Springer resolution in our present setting.
The constructions discussed below work more generally, in the framework of
an arbitrary smooth curve $C$. However, to simplify the exposition,
we restrict ourselves to the case $C=\C^\times$.

Write $\CB$ for the  flag variety, that is, the variety of complete flags 
$F=(0=F_0\sset F_1\sset\ldots \sset F_{n-1}\sset F_n=V),$ where
$\dim F_k=k,\forall k=0,\ldots,n.$
 In the trigonometric case, the variety
$\widetilde{\rep_C^n}$,
introduced in \S\ref{flags}, reduces to
$\wg=\{(g, F)\in SL_n\times \CB\;|\; g(F_k)\sset F_k,\ \forall
k=0,\ldots,n\}.$ The first projection yields a proper morphism
$\wg\onto SL_n$ known as the {\em Grothendieck-Springer resolution}.

Now, fix an integer $1\leq m\leq n$.
We define a closed subvariety $\X_{n,m}\sset \X_n$ as follows
$$
\X_{n,m}:=\{(g,\ell)\in SL_n\times \P\en|\en\dim (\C[g]\ell)\leq m\},
$$
where $\C[g]\ell\sset V$ denotes the minimal $g$-stable subspace in $V$
that contains the line $\ell\sset V$.

Next, we put $\wt
\X_n=\wg\times \P$ and
define a closed subvariety $\wt \X_{n,m}\sset \wt
\X_n$
as follows
$$\wt \X_{n,m}:=\{(g,F,\ell)\in SL_n\times \CB\times \P\en|\en
 \ell\in F_m,\en\&\en g(F_k)\sset F_k,\ \forall
k=0,\ldots,n\}.
$$
We have a diagram
$$
\xymatrix{
&&&\wt \X_{n,m}\ar[dlll]_<>(0.5){_{(g,F,\ell)\mto (g,\ell)}\en}
\ar[drrr]^<>(0.5){\en^{(g,F,\ell)\mto
(F,\ell)}}&&&\\
SL_n\times \P&&&&&&\CB\times \P.
}
$$

We observe that:
\vskip 3pt

\pb{The projection $\wt \X_{n,m}\to \CB\times \P,\, (g,F,\ell)\mto
(F,\ell)$
makes $\wt \X_{n,m}$ a locally trivial fibration  over
the base $\{(F,\ell)\in\CB\times \P\;|\; \ell\sset F_m\}.$
Both the fiber and the base are smooth.\newline
\hphantom{x}\qquad
Thus, $\wt \X_{n,m}$ is smooth.}
\vskip 2pt

\pb{The image of the projection $\wt \X_{n,m}\to SL_n\times \P$ is
contained
in $\X_{n,m}$, hence the projection gives a well defined  morphism:}
$$\pi_{n,m}:\
\wt \X_{n,m}\too \X_{n,m},\quad (g,F,v)\mto (g,v).
$$

Let $\Xr$ be  an open subset of $\X_{n,m}$ formed by the
pairs $(g,\ell)$ such that $g$ is a matrix with $n$ pairwise
distinct eigenvalues. Let $\wtxr=\pi_{n,m}\inv(\Xr)$, an
open subset in $\wt \X_{n,m}$.

\begin{proposition}
\label{Springer}

\vi The map $\pi_{n,m}:\ \wt \X_{n,m}\to \X_{n,m}$  is a dominant
proper morphism,
which is 
{\em small} in the sense of Goresky-MacPherson.

\vii The restriction
$\pi_{n,m}: \wtxr\to\Xr$  is a Galois covering with the Galois
group ${\mathbb S}_m\times{\mathbb S}_{n-m}$.
\end{proposition}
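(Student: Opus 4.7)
The plan is to handle (ii) first, since the generic fiber analysis informs the stratification needed in (i). Over $\Xr$, the matrix $g$ is regular semisimple with eigenlines $L_1,\dots,L_n$, and every $g$-stable subspace is a partial direct sum of these. Writing $\ell=\BC v$ with $v=\sum v_i$, $v_i\in L_i$, one has $\BC[g]\ell=\bigoplus_{v_i\ne 0}L_i$. On the open subset where $\dim\BC[g]\ell=m$ (which I take to be the operative meaning of the regularity notation; otherwise the fiber cardinality jumps on deeper strata), the condition $\ell\in F_m$ forces $F_m=\BC[g]\ell$ uniquely, so specifying a full $g$-stable refinement reduces to ordering the $m$ eigenlines inside $F_m$ together with the $n-m$ eigenlines in a $g$-stable complement --- a free $\mathbb S_m\times\mathbb S_{n-m}$-set of size $m!(n-m)!$. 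Globally this action is the restriction to $\wtxr\hookrightarrow\wg\times\P$ of the classical $\mathbb S_n$-action on the regular-semisimple part of the Grothendieck-Springer resolution $\wg\to SL_n$; the subgroup preserving the incidence $\ell\in F_m$ is precisely $\mathbb S_m\times\mathbb S_{n-m}$, giving the Galois cover.

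For (i), properness is clear: $\wt\X_{n,m}$ is closed in $SL_n\times\CB\times\P$, and the projection to $SL_n\times\P$ is proper because $\CB$ is projective, so $\pi_{n,m}$ is proper. Dominance reduces to producing, for each $(g,\ell)\in\X_{n,m}$, a $g$-stable complete flag with $\ell\in F_m$: extend $\BC[g]\ell$ to a $g$-stable subspace of dimension $m$ containing $\ell$ (possible since $\dim\BC[g]\ell\le m$), and then extend below and above to a full $g$-stable flag by standard Jordan-form arguments. Surjectivity of $\pi_{n,m}$ onto $\X_{n,m}$ follows, as do dominance and equidimensionality from the common dimension count ($\dim\wt\X_{n,m}=(n^2-1)+(m-1)=\dim\X_{n,m}$ via the codimension-$(n-m)$ description of $\X_{n,m}$ inside $SL_n\times\P$).

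The principal obstacle is smallness. The strategy is to stratify $\X_{n,m}$ by pairs (Jordan type of $g$, combinatorial type of $\ell$ relative to the generalized-eigenspace decomposition of $g$), analogous to the classification of $GL_n$-orbits on $\CN\times V$ recalled in the introduction. Via the base-change description $\wt\X_{n,m}=\wg\times_\CB\{(F,\ell):\ell\in F_m\}$, fibers of $\pi_{n,m}$ sit as closed subvarieties of fibers of the classical small map $\wg\to SL_n$, cut out by the incidence condition $\ell\in F_m$. Smallness of the Grothendieck-Springer resolution is available off the shelf, but the main technical work is to verify the Goresky-MacPherson inequality $\operatorname{codim}_{\X_{n,m}}(\Sigma)>2\dim\pi_{n,m}^{-1}(y)$ on every non-generic stratum $\Sigma$ --- i.e.\ to show that intersecting with the incidence condition on $\ell$ and then descending to $(g,\ell)$ preserves the strict inequality. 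This stratum-by-stratum case analysis, indexed by Jordan type of $g$ and by the support pattern of $\ell$ in the generalized eigenspaces, is where I anticipate the real difficulty.
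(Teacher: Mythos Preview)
Your treatment of (ii), properness, and dominance is fine and essentially matches the paper. The gap is smallness: you outline a stratification of $\X_{n,m}$ by Jordan type of $g$ together with the combinatorial type of $\ell$, and then say outright that the stratum-by-stratum verification of the Goresky--MacPherson inequality ``is where I anticipate the real difficulty.'' That is not a proof; it is a plan, and a laborious one, since it would require controlling Springer-fiber dimensions with an incidence constraint across all Jordan types.

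The paper bypasses this entirely with a Steinberg-variety argument. Set $Z=\wt\X_{n,m}\times_{\X_{n,m}}\wt\X_{n,m}$, so a point of $Z$ is a quadruple $(g,F,F',\ell)$ with $g$ stabilizing both flags and $\ell\subset F_m\cap F'_m$. Semismallness is the bound $\dim Z\le\dim\wt\X_{n,m}$, and smallness is the statement that every top-dimensional component of $Z$ dominates $\X_{n,m}$. Now stratify $Z$ not by Jordan data on $g$ but by the \emph{relative position} $w\in\mathbb S_n$ of the pair $(F,F')$, giving $Z=\bigsqcup_{w}Z_w$. Each $Z_w$ fibers over the Bruhat cell $O_w\subset\CB\times\CB$; over a point $(F,F')$ the fiber is the product of the group of $g$ fixing both flags with the projective space of lines $\ell\subset F_m\cap F'_m$. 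A direct count (Borel intersection plus $\dim(F_m\cap F'_m)$) gives $\dim Z_w\le\dim\wt\X_{n,m}$, with equality precisely when $F_m=F'_m$, i.e.\ when $w\in\mathbb S_m\times\mathbb S_{n-m}$. For such $w$ the piece $Z_w$ visibly dominates $\X_{n,m}$, which is smallness; and the same description of the top-dimensional components immediately yields (ii). The point is that relative position of flags absorbs all the Jordan-type case analysis into a single uniform dimension count---this is the idea you are missing.
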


\begin{proof} Clearly, $\pi_{n,m}$ is proper and has a dense image.
Hence,
it is dominant.
 
Let $Z:=\wt \X_{n,m}\times_{\X_{n,m}} \wt \X_{n,m}$, so we have a
projection
$Z\to \X_{n,m}.$
To prove that  $\pi_{n,m}$ is {\em semi}small, one
 must check that $\dim Z\leq n^2+m.$ To prove that
 $\pi_{n,m}$ is small one must show in addition
that each irreducible component of $Z,$ of dimension $n^2+m,$ dominates
$\X_{n,m}$. The argument is very similar to the standard proof of
 smallness of the Grothendieck-Springer resolution. 

In more detail, for $w\in{\mathbb S}_n$ we denote by
$Z_w$ the locally closed subvariety of
$Z$ formed by the quadruples $(g,F, F',\ell)$
 such that the flags $F$ and $F'$ are
in relative position $w$ (and such that $\ell\sset F_m\cap F'_m$, and 
$g(F)=F, \, g(F')=F'$). Then we have $Z=
\bigsqcup_{w\in{\mathbb S}_n}Z_w$.

We may view $Z_w$ as a fibration over an
$SL_n$-orbit in $\CB\times\CB,$ the cartesian square of flag variety.
We see immediately that
$\dim Z_w\leq n^2+m$ with an exact equality
if and only if $F_m=F'_m$. The latter equality holds if and only if 
$w\in{\mathbb S}_m\times{\mathbb S}_{n-m}\subset{\mathbb S}_n$.
For such a $w$ it is clear that $Z_w$ dominates
$\X_{n,m}$. This completes the proof of (i).

Now part (ii) follows easily from the above description of irreducible components.
\end{proof}

Let $\Sigma_N$ denote the set of partitions
of an integer $N$.
For any partition $\la\in\Sigma_m$, resp. $\mu\in\Sigma_{n-m}$, 
write $L_\la$, resp. $L_\mu$, for  an irreducible representation
of the Symmetric group ${\mathbb S}_m$, resp. ${\mathbb S}_{n-m}$,
associated with
that partition in a standard way. Thus,
$L_\la\boxtimes L_\mu$
is an irreducible representation
of the group ${\mathbb S}_m\times{\mathbb S}_{n-m}$.

Let $\C_{\wtxr}$ be the constant sheaf
on $ \wtxr$.
According to Proposition \ref{Springer}(ii), we have 
$$(\pi_{n,m})_*\C_{\wtxr}=
\bigoplus_{(\la,\mu)\in\Sigma_m\times\Sigma_{n-m}}
(L_\la\boxtimes L_\mu)\ \o \ {\scr L}_{\la,\mu},
$$
where ${\scr L}_{\la,\mu}$ is an irreducible local
system on $\Xr$ with monodromy $L_\la\boxtimes L_\mu$.

Now, let  $\C_{\wt\X_{n,m}}[\dim \wt \X_{n,m}]$ be the constant sheaf
on $\wt\X_{n,m}$, with shift normalization as a perverse sheaf.
Then, from part (i) of  Proposition \ref{Springer}, using the definition
of an intersection cohomology complex,
we deduce

\begin{corollary}
\label{principal series} There is a direct sum decomposition
$$
(\pi_{n,m})_*\C_{\wt\X_{n,m}}[\dim \wt \X_{n,m}]=
\bigoplus_{(\la,\mu)\in\Sigma_m\times\Sigma_{n-m}}
(L_\la\boxtimes L_\mu)\ \o \ IC({\scr L}_{\la,\mu}).\eqno\Box
$$
\end{corollary}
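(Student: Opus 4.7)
The plan is to combine two classical facts: the behavior of $IC$-sheaves under small proper maps, and the representation-theoretic decomposition of the direct image under a Galois cover. First I would note that since $\wt\X_{n,m}$ is smooth (as established in the bullet points preceding Proposition \ref{Springer}), the shifted constant sheaf $\C_{\wt\X_{n,m}}[\dim\wt\X_{n,m}]$ is simply the intersection cohomology sheaf of $\wt\X_{n,m}$. I would also record that $\X_{n,m}$ is irreducible: indeed $\wt\X_{n,m}$ is a Borel-subgroup fibration over the smooth irreducible base $\{(F,\ell)\in\CB\times\P\mid \ell\sset F_m\}$, so it is irreducible, and hence so is its image $\X_{n,m}$ under the proper surjective map $\pi_{n,m}$.

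Next I would invoke the Goresky--MacPherson principle: for any proper small morphism $\pi$ between irreducible varieties with smooth source, the direct image $\pi_* IC$ equals the $IC$-extension of its restriction to any smooth open dense subset of the target over which $\pi$ is \'etale. Combined with the smallness of $\pi_{n,m}$ (Proposition \ref{Springer}(i)) and the fact that $\Xr$ is such a subset (Proposition \ref{Springer}(ii)), this yields
\[
(\pi_{n,m})_*\C_{\wt\X_{n,m}}[\dim\wt\X_{n,m}] \;\simeq\; IC\bigl(\X_{n,m},\, ((\pi_{n,m})_*\C_{\wtxr})|_{\Xr}\bigr).
\]

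For the local system on the right I would use the standard fact that the direct image of the constant sheaf under a finite \'etale Galois cover decomposes as a sum, indexed by irreducible representations of the deck group, of the associated local systems with multiplicity equal to the dimension of the representation. The Galois group here is $\mathbb{S}_m\times\mathbb{S}_{n-m}$ by Proposition \ref{Springer}(ii), whose irreducible representations are the $L_\la\boxtimes L_\mu$ for $(\la,\mu)\in\Sigma_m\times\Sigma_{n-m}$, producing exactly the decomposition
\[
(\pi_{n,m})_*\C_{\wtxr} \;=\; \bigoplus_{(\la,\mu)\in\Sigma_m\times\Sigma_{n-m}} (L_\la\boxtimes L_\mu)\o {\scr L}_{\la,\mu}
\]
already recorded just before the statement. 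Since the $IC$ functor is additive, applying it to this decomposition gives the desired formula.

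I do not anticipate a serious obstacle: the geometric content has been packaged into Proposition \ref{Springer}, and the remaining step is a routine consequence of Goresky--MacPherson's small-map theorem together with the Maschke-type decomposition of the pushforward under a Galois cover. The only point to verify carefully is that the $IC$-sheaf is taken on $\X_{n,m}$ rather than on the ambient $\X_n$; this is automatic because $\pi_{n,m}$ is proper with image $\X_{n,m}$, so the left-hand side has support in $\X_{n,m}$, and $\Xr$ is open dense there.
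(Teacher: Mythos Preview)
Your proposal is correct and follows essentially the same route as the paper: the paper derives the corollary directly from Proposition~\ref{Springer}(i) (smallness) together with the Galois decomposition over $\Xr$ recorded just before the statement, invoking ``the definition of an intersection cohomology complex''---which is exactly the Goresky--MacPherson small-map principle you spell out. Your write-up simply makes explicit the routine points (smoothness of the source giving $IC$, irreducibility of $\X_{n,m}$, additivity of $IC$) that the paper leaves implicit.
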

Here, $IC(-)$ denotes the intersection cohomology extension of a local
system.

One can also translate the statement of the corollary into 
a $\D$-module language. To this end, write
$i_{n,m}:\ \wt \X_{n,m}\hookrightarrow
\wt\X_n=\wg\times\P$ for an obvious closed embedding.
For any {\em integer} $c\in\Z$, one has a
$\D_{0,c}$-module 
$\CO(c)_{n,m}:=i_{n,m}^*\big(\CO_{\wg}\boxtimes\CO_\P(c)\big)[m-n],$
on $\wt\X_{n,m}$. 

Corollary \ref{principal series} yields the following result.

\begin{corollary} The direct image
$(\pi_{n,m})_*\CO(c)_{n,m}$ is a semisimple $\D_{0,c}$-module on
$\X_{n,m}$ and 
one has
 a direirect sum decomposition
$$
(\pi_{n,m})_*\CO(c)_{n,m}=
\bigoplus_{(\la,\mu)\in\Sigma_m\times\Sigma_{n-m}}
(L_\la\boxtimes L_\mu)\ \o \ \CF_{\lambda,\mu},
$$
where  $\CF_{\lambda,\mu}$ is
an irreducible character $\D_{0,c}$-module on $\X_n$.
\qed
\end{corollary}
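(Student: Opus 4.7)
The plan is to deduce the statement from Corollary~\ref{principal series} by reinterpreting the perverse-sheaf decomposition there as a decomposition of twisted $\D$-modules via the Riemann--Hilbert correspondence. First I would observe that, by the first bullet preceding Proposition~\ref{Springer}, the variety $\wt\X_{n,m}$ is smooth, so the complex $\CO(c)_{n,m}:=i_{n,m}^*(\CO_{\wg}\boxtimes\CO_\P(c))[m-n]$ is a single simple holonomic $\D_{0,c}$-module on $\wt\X_{n,m}$: its underlying perverse sheaf is the shifted twisted local system on $\wt\X_{n,m}$ obtained from the $c$-th root of the tautological line bundle on $\P$. When $c=0$ this reduces to $\C_{\wt\X_{n,m}}[\dim\wt\X_{n,m}]$, so Corollary~\ref{principal series} handles that case directly.

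Next I would invoke Proposition~\ref{Springer}(i): the map $\pi_{n,m}$ is small and proper, so the decomposition theorem (in its twisted form) applies to $(\pi_{n,m})_*\CO(c)_{n,m}$. Smallness guarantees that the push-forward is pure and equals the intermediate extension of its restriction to any open dense subset. Restricting to the regular locus $\Xr$, Proposition~\ref{Springer}(ii) identifies $\pi_{n,m}:\wtxr\to\Xr$ with a Galois covering with group $\BS_m\times\BS_{n-m}$. Since $\CO_\P(c)$ is pulled back from $\X_n$, the twist is $\BS_m\times\BS_{n-m}$-equivariant, so the decomposition of $(\pi_{n,m})_*\CO(c)_{n,m}|_{\Xr}$ into isotypic components is controlled by the regular representation of $\BS_m\times\BS_{n-m}$. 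Taking intermediate extensions gives the claimed direct sum with $\CF_{\la,\mu}$ irreducible, mirroring the argument of Corollary~\ref{principal series}.

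It remains to verify that each $\CF_{\la,\mu}$ is a character $\D_{0,c}$-module, i.e.\ that its characteristic variety is contained in $\Nnil$. Since each $\CF_{\la,\mu}$ is a direct summand of $(\pi_{n,m})_*\CO(c)_{n,m}$, it suffices to show $SS\bigl((\pi_{n,m})_*\CO(c)_{n,m}\bigr)\sset\Nnil$. By the standard formula for characteristic varieties under proper push-forward, this variety is the image under the cotangent correspondence of $T^*_{\wt\X_{n,m}}(\wg\times\P)$ (the conormal to the support of $\CO(c)_{n,m}$). A point in that image consists of a quadruple $(g,\ell,y,[i{:}j])$ where, at some $(g,F,\ell)\in\wt\X_{n,m}$ lying over $(g,\ell)$, the cotangent covector $y\in\mathfrak{sl}_n^*$ annihilates the tangent space to the stabilizer condition $g(F_k)\sset F_k$; concretely, $y$ preserves the flag $F$ and shifts it down, hence $y$ admits a complete nil-flag in the sense of Definition~\ref{nilflag}. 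By Proposition~\ref{nil2}, this is exactly the condition that $(g,y,i,j)\in\Nnil(\C^\times)$, and after the Hamiltonian $\C^\times$-reduction on the $\Vo$-factor we land in $\Nnil\sset T^*(SL_n\times\P)$. Pushing forward along the closed embedding $\X_{n,m}\into\X_n$ preserves characteristic varieties, so $\CF_{\la,\mu}$ is a character $\D_{0,c}$-module on $\X_n$.

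The main obstacle I anticipate is the twisted version of the decomposition theorem: one must ensure that the $c$-twist on $\P$ lifts equivariantly along the Galois cover $\wtxr\to\Xr$, so that the Frobenius-reciprocity identification of isotypic components really produces irreducible twisted IC modules. This is standard once one notes that the twist is pulled back from the base $\X_n$ (hence from $\X_{n,m}$), but it is the only step that requires care beyond a direct translation of Corollary~\ref{principal series}.
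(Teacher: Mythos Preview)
Your approach is exactly what the paper intends: the corollary is stated with a bare $\qed$ after the sentence ``Corollary~\ref{principal series} yields the following result,'' so the paper's proof is literally the translation of the perverse-sheaf decomposition into $\D$-module language via Riemann--Hilbert (legitimate here since $c\in\Z$). You have in fact supplied more detail than the paper does, particularly the verification that each $\CF_{\la,\mu}$ has characteristic variety in $\Nnil$; your argument for this via the nil-flag/Proposition~\ref{nil2} is correct in outline, though the phrase ``conormal to the support of $\CO(c)_{n,m}$'' is slightly imprecise---what you want is the image under the cotangent correspondence for $\pi_{n,m}$ of the zero section of $T^*\wt\X_{n,m}$, which one then identifies (via the factorization $\wt\X_{n,m}\into\wt\X_n\to\X_n$ and the diagram in \S\ref{flags}) with the set carrying a nil-flag.
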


\subsection{Cuspidal $\D$-modules.}\label{chetyre}
The goal of this subsection is to
describe  character
$\D_c$-modules which have finite dimensional Hamiltonian reduction.
These $\D_c$-modules turn out to be closely related
to {\em cuspidal} character sheaves on $SL_n$.

In more detail, write  $Z(SL_n)$ for the center
of the group $SL_n$. Thus,  $Z(SL_n)$
is a cyclic group, the group of scalar matrices of
the form $z\cdot\Id,$ where $z\in\C$ is an $n$-th root of
unity. 

Let $\bN\subset SL_n$ be the unipotent cone, and let
$\jmath: \bN^\reg\into\bN$ be an open imbedding of
the conjugacy class formed by the regular unipotent
elements. The fundamental group of $\bN^\reg$ may be identified
canonically with $Z(SL_n)$. For each  integer $p=0,1,\ldots,n-1,$ there 
 is a group homomorphism $Z(SL_n)\to\C^\times, \,z\cdot\Id\mto z^p.$
Let  $\sL_p$ be
 the corresponding rank one $SL_n$-equivariant local
system, on $\bN^\reg$, with monodromy $\theta=\exp(\frac{2\pi\sqrt{-1}p}{n})$.

From now on, we assume  that $(p,n)=1$, ie., that
$\theta$ is a primitive $n$-th root of
unity. Then, the  local
system $\sL_p$ is known to be {\em clean}, that is,
for $\D$-modules on $SL_n$,
one has $\jmath_!\sL_p\cong\jmath_{!*}\sL_p\cong\jmath_*\sL_p$,
cf. \cite{L} or \cite{Os}. Given a central element 
$z\in Z(SL_n)$, we have the conjugacy class  $z\bN^\reg\subset SL_n$, the $z$-translate of $\bN^\reg$,
and we let $z\jmath_!\sL_p$ denote  the corresponding translated 
 $\D$-module supported on the closure of
 $z\bN^\reg$. According to Lusztig \cite{L},  $z\jmath_!\sL_p$ is
a cuspidal character $\D$-module on the group $SL_n$.

Further, for any integer $c\in\Z,$ we may form
 $z\CL_{p,c}:=(z\jmath_!\sL_p)\boxtimes \CO(c)$,
a twisted $\D$-module on $SL_n\times\P$.
Thus, $z\CL_{p,c}$ is a simple  character
$\D_{c}$-module.

\begin{theorem} \label{cusp_thm}
\vi Let $c$ be a nonnegative real number
and let $\CF$ be a nonzero simple  character
$\D_{c}$-module. Then, the following properties are equivalent:
\begin{enumerate}
\item The support of $\CF$ is contained in $(Z(SL_n)\cdot\bN)\times \P;$
%\item The hamiltonian reduction $\BH(\CF)$ is a {\em finite dimensional}
%irreducible $\e\sH_\kappa^\trig(SL_n)\e$-module where $\kappa=c/n$;
\item We have $\CF\cong z\CL_{p,c},$ for some $z\in Z(SL_n)$ 
and some integers $p,c$ such that $p$ is prime to $n$, and $0<p<n$.
\end{enumerate}

\vii For a simple character $\D_{c}$-module
 $\CF$, we have
\begin{equation}\label{trtr}
\BH(\CF)\neq 0\en\oper{and}\en
\dim \BH(\CF)<\infty
\quad\Longleftrightarrow\quad
\operatorname{(1)}\mbox{-}\operatorname{(2)}\en\oper{hold\ and,\
we\ have}\en
n|(c-p).
\end{equation}

\viii Let
$\kappa=c/n$, where $c$ and $n$ are mutually prime integers. Then, the 
functor $\BH$ yields a one-to-one correspondence between
 simple  character
$\D_{c}(SL_n\times\P)$-modules of the form $z\CL_{p,c}$, with $n|(c-p),$
and finite dimensional  irreducible
$\e\sH_\kappa^\trig(SL_n)\e$-modules, respectively.
\end{theorem}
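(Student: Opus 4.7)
The plan is to prove the three parts in sequence, using each as a stepping stone for the next, and combining geometric analysis of character $\D$-modules with the Hamiltonian reduction equivalence of Proposition \ref{ham_fun} and the Morita equivalence of Corollary \ref{morita_cor}. The direction $(2)\Rightarrow(1)$ of part (i) is immediate from the definition of $z\CL_{p,c}$. For $(1)\Rightarrow(2)$, I would first show that a simple $SL_n$-equivariant character $\D_c$-module $\CF$ with $\Supp\CF\subset (Z(SL_n)\cdot\bN)\times\P$ factors as an external tensor product $\CM\boxtimes\CO(c)$, where $\CM$ is simple $SL_n$-equivariant on $SL_n$. Indeed, since $\P$ is $SL_n$-homogeneous, the unique (up to isomorphism) simple $SL_n$-equivariant $\D_c$-module on $\P$ is $\CO(c)$; this, combined with the diagonal $SL_n$-orbit structure on $(Z(SL_n)\cdot\bN)\times\P$, yields the box-product decomposition by equivariant descent. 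The character condition on $\CF$ forces $\CM$ to have nilpotent singular support, so $\CM$ is a character sheaf in Lusztig's sense, and Lusztig's classification of cuspidal character sheaves on $SL_n$ with support in the (translated) unipotent cone identifies $\CM$ with some $z\jmath_!\sL_p$ for $(p,n)=1$, $0<p<n$.

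For part (ii), invoke Corollary \ref{findimcor}: $\BH(\CF)$ has finite-dimensional global sections iff $\supp(\Supp\CF)$ is finite in $(\C^\times)^{(n)}$. Since $\supp$ sends $g\in SL_n$ to its multiset of eigenvalues and $\Supp\CF$ is $SL_n$-stable, finiteness of $\supp(\Supp\CF)$ forces every $g$ in $\Supp\CF$ to have a single eigenvalue, which by $\det g=1$ must lie in $Z(SL_n)$. Hence $\Supp\CF\subset (Z(SL_n)\cdot\bN)\times\P$, and by part (i), $\CF\cong z\CL_{p,c}$ for some $z$ and $p$. The remaining requirement $n\mid (c-p)$ encodes a central character compatibility: $Z(SL_n)\cong\boldsymbol{\mu}_n$ acts on the stalk of $\sL_p$ through the character $\zeta\mapsto\zeta^p$ and on $\CO(c)$ through $\zeta\mapsto\zeta^{-c}$. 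The $SL_n$-invariants of the pushforward $\supp_\idot\CF$ at the unique point $(z,\ldots,z)$ of its support are nonzero iff these two characters cancel, i.e.\ iff $p\equiv c\pmod n$.

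For part (iii), fix $p\in (0,n)$ with $p\equiv c\pmod n$; since $(c,n)=1$, also $(p,n)=1$. By parts (i) and (ii), the simple character $\D_c$-modules with $\BH(\CF)$ nonzero and finite-dimensional are precisely the $n$ modules $\{z\CL_{p,c}\}_{z\in Z(SL_n)}$. The equivalence $\scr C_{0,c}/\ker\BH\iso\CO(\sA_{\kappa,0})$ of Proposition \ref{ham_fun}(i) sends them to $n$ pairwise non-isomorphic simple finite-dimensional objects of $\CO(\sA_{\kappa,0})$, and the Morita equivalence of Corollary \ref{morita_cor} further identifies these with $n$ finite-dimensional irreducible $\sH_\kappa^{\trig}(SL_n)$-modules. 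Exhaustiveness follows from the classical count (Cherednik, with a Berest--Etingof--Ginzburg-type argument in type $A$): at $\kappa=c/n$ with $(c,n)=1$, the algebra $\sH_\kappa^{\trig}(SL_n)$ has exactly $n$ finite-dimensional irreducible representations, matching our $n$ character $\D$-modules.

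I expect the central-character calculation of part (ii) to be the main obstacle: it requires both tracking the monodromy normalization of $\sL_p$ around a regular unipotent in each translate $z\bar{\bN}^{\reg}$ and identifying the $SL_n$-linearization of $\CO(c)$, then verifying nonvanishing of the $SL_n$-invariants on $\supp_\idot\CF$ precisely when the congruence holds. The splitting argument in part (i) is routine once equivariant descent along $\P$ is set up, and part (iii) is largely formal given the finite-dimensional classification on the Cherednik side.
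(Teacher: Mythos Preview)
Your argument for part (i) has a real gap: the box-product decomposition $\CF\cong\CM\boxtimes\CO(c)$ is not justified by ``equivariant descent,'' and in fact it fails without using the character condition. The point is that $SL_n$-equivariant $\D_c$-modules on $SL_n\times\P$ correspond (by restricting to $SL_n\times\{\text{pt}\}$) to $P$-equivariant $\D$-modules on $SL_n$ for the mirabolic parabolic $P$, and such a module is a box product only when its $P$-equivariance extends to full $SL_n$-equivariance. This is not automatic. Concretely, take the orbit $\BO_1=\{(X,[v]):X\in\bN^\reg,\ v\in\im(X-1)\smallsetminus\im(X-1)^2\}$; the IC extension of any equivariant local system on $\BO_1$ is a simple $SL_n$-equivariant $\D_c$-module supported in $\bN\times\P$ which is \emph{not} a box product (its support is strictly smaller than $\overline{\bN^\reg}\times\P$). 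What rules such modules out is precisely the condition $SS(\CF)\subset\mnil$: the conormal bundle $T^*_{\BO_1}(SL_n\times\P)$ is \emph{not} contained in $\mnil$. The paper exploits this directly, citing \cite{GG}, Theorem~4.3, to show that the only $GL_n$-orbit $\BO\subset\bN\times\Vo$ with $T^*_\BO\subset\mnil$ is the open orbit $\BO_0$. Only after this does one analyze local systems on $\BO_0$ (a $GL_n$-torsor, so $\pi_1=\BZ$), use the boundary orbit $\BO_1$ to force $\vartheta^n=1$, and then conclude that the local system is pulled back from $\bN^\reg$---the box-product form emerges as a \emph{consequence}, not an input. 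Even granting your decomposition, you would still need the (true but nontrivial) Lusztig fact that every simple character sheaf on $SL_n$ supported on $Z\cdot\bN$ is cuspidal; you invoke ``Lusztig's classification of cuspidal character sheaves'' without first establishing cuspidality.

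For part (ii), your reduction via Corollary~\ref{findimcor} to the support condition is the same as the paper's, but for the congruence $n\mid(c-p)$ the paper simply cites \cite{CEE}, Theorem~9.19, rather than carrying out the central-character computation you sketch; your approach is plausible but you have not actually performed the stalk calculation. For part (iii), your strategy invokes an external count of finite-dimensional irreducibles of $\sH^\trig_\kappa(SL_n)$; the paper instead argues internally: given a finite-dimensional $M$, it takes $\CF={}^\top\BH(M)$, observes that every simple subquotient $\CF_i$ satisfies (1)--(2) by \eqref{sinv} and Corollary~\ref{findimcor}, hence $\BH(\CF_i)\neq0$, and then exactness of $\BH$ together with $\BH({}^\top\BH(M))=M$ forces $\CF$ to be simple. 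This avoids the appeal to an outside classification.
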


\proof  
The implication $(2)\;\Rightarrow\;(1)$ of part (i) is clear.
We now prove that $(1)\;\Rightarrow\;(2)$.

The group $GL_n$ acts on $\bN\times\Vo$ with finitely many orbits,
see ~\cite{GG},~Corollary 2.2. 
The open orbit $\BO_0$ is formed by the pairs $(X,v)$
where $X\in\bN^\reg$, and $v\in V\setminus\im(X-1)$. 
The action of $GL_n$ on $\BO_0$
is free (and transitive). Let $\BO$ be a unique $GL_n$-orbit open in the
support of $\CF$. The singular support of $\CF$ contains the conormal
bundle $T^*_\BO(SL_n\times\Vo)$. According to~\cite{GG},~Theorem~4.3,
if $\BO$ lies in $\bN\times\Vo$, and $T^*_\BO(SL_n\times\Vo)$ lies in
$\mnil:=\Nnil(\BC^\times)\cap
T^*(SL_n\times\Vo)$, then $\BO=\BO_0$.

We see that $\CF$ must be the minimal extension of an
irreducible local system on $\BO_0$.
Since $\BO_0$ is a $GL_n$-torsor, its fundamental group is $\BZ$, and
an irreducible local system is 1-dimensional with monodromy $\vartheta$;
it is $GL_n$-monodromic with monodromy $\vartheta$. We will denote such
local system by $L_\vartheta$.
It remains to prove that
$\vartheta=\theta$. Note that the boundary of $\BO_0$ contains a
codimension 1 orbit $\BO_1$ formed by the pairs $(X,v)$ such that
$X\in\bN^\reg,\ v\in\im(X-1)\setminus\im(X-1)^2$. 
The monodromy of $L_\vartheta$
around $\BO_1$ equals $\vartheta^n$. Thus if $\vartheta^n\ne1$, the
singular support of the minimal extension of $L_\vartheta$ necessarily
contains $T^*_{\BO_1}(SL_n\times\Vo)$. As we have just seen, the latter
is not contained in $\Nnil$; a contradiction. Hence $\vartheta^n=1$.

Now the monodromy of $L_\vartheta$ along lines in $\Vo$ is also equal
to $\vartheta^n=1$. Hence $L_\vartheta$ is a pullback to 
$\bN^\reg\times\Vo$ of a local system on $\bN^\reg$ with monodromy
$\vartheta$. We conclude that the minimal extension of 
this latter local system must be a classical character sheaf on $SL_n$,
so $\vartheta$ must be a primitive root of unity $\theta$.
This completes the proof of part (i) of the theorem.

We prove part (ii).
It follows from \cite{CEE},~Theorem~9.19,
that  $\BH(\CL_{p,c})\neq 0$ iff the integer $c$ is prime to $n$, and
$p$ is the residue of $c$ modulo $n$.
Now the first statement
is immediate from Corollary \ref{findimcor}. 
Let $M$ be a finite dimensional
$\e\sH_\kappa^\trig(SL_n)\e$-module and let $\CF:={}^\top\BH(M)$.
Then, $\CF$ is a character $\D$-module by Proposition \ref{ham_fun}.
Thus, $\CF$ has finite length. Let $\CF_1,\ldots,\CF_r$
be the collection of simple subquotients of $\CF$,
counted with multiplicities. The inclusion in \eqref{sinv}
combined with Corollary \ref{findimcor}
imply that each of the $\D$-modules $\CF_i$ satisfies
the equivalent conditions (1)-(3) of part (i) of the theorem,
hence, has the form $\CF_i=z_i\CL_{p_i,c_i},$ for some integers $c_i$
prime to $n$ with residues $c_i$ modulo $n$, 
and some $z_i\in Z(SL_n)$. Therefore, as we have mentioned earlier,
for any $i=1,\ldots,r,$ one has $\BH(\CF_i)=\BH(z_i\CL_{p_i,c_i})\neq 0$.

On the other hand, the functor of Hamiltonian reduction is exact
and we know that $\BH({}^\top\BH(M))$ $=M,$  by Proposition \ref{ham_fun}.
We deduce that $\CF_i=0$ for all $i$ except one. 
Thus, $\CF$ is a simple character $\D$-module and
part (ii) follows. 
%The criterion for
%the existence of finite-dimensional $\e\sH_\kappa^\trig(SL_n)\e$-modules
%given in part (iii) of the theorem is proved in \cite{BEG}.
The statements of part (iii) are now clear.
\endproof
\setcounter{equation}{0}
\footnotesize{

\nopagebreak
\end{document}